\documentclass[10pt,a4paper]{amsart}
\usepackage{amssymb,amsmath,amsfonts,amscd}
\usepackage{lmodern}
\usepackage[utf8]{inputenc} 
\usepackage[T1]{fontenc}    
\usepackage{hyperref}       
\usepackage{url}            
\usepackage{booktabs}       
\usepackage{fancyhdr}
\usepackage{indentfirst}
\usepackage{graphicx}
\usepackage{subfigure}
\usepackage{wrapfig}
\usepackage{newlfont}
\usepackage{color}
\usepackage{latexsym}
\usepackage{csquotes}
\usepackage{amsthm}
\usepackage{mathtools}
\usepackage{nicefrac}
\usepackage{epstopdf}
\usepackage{caption}
\usepackage{bbm}
\usepackage[shortlabels]{enumitem}
\usepackage{tikz-cd}
\usepackage{microtype}      
\usepackage{lipsum}
\graphicspath{ {./images/} }
\usepackage{comment}

\DeclareMathOperator*{\esssup}{ess\,sup}
\DeclareMathOperator*{\essinf}{ess\,inf}

\newcommand{\R}{\mathbb R}

\newcommand{\Z}{\mathbb Z}

\newcommand{\N}{\mathbb N}

\def \N {\mathbb{N}}
\def \R {\mathbb{R}}

\def \Z {\mathbb{Z}}

\def \d {\mathrm{d}}

\def \de {\partial}

\def \e {\varepsilon }

\def \dsy {\displaystyle}

\def \div {\mathrm{div}}

\newtheorem{teo}{Theorem}[section]
\newtheorem{prop}[teo]{Proposition}
\newtheorem{coro}[teo]{Corollary}
\newtheorem{lemma}[teo]{Lemma}
\newtheorem{defi}[teo]{Definition}

\newtheorem{remark}[teo]{Remark}

\newtheorem{oss}[teo]{Remark}

\newtheorem{hypothesis}[teo]{Hypothesis}

\title{Analysis of the\\ 
Quasi-Static Maxwell Equations 
\\in Resistive Solid-State Particle Detectors}

\keywords{Maxwell equations, mixed boundary conditions, evolution equations, Hille-Yosida operators}

\subjclass{35Q61, 35G15, 47D06, 46N20}

\author{
 Alessandro Rosa 
}

\begin{document}

\begin{abstract}
We solve a boundary value problem arising from Maxwell’s equations in the quasi-static approximation, which governs the time evolution of the so-called weighting potential $V_{\mathrm{w}}(t,x)$ in resistive solid-state particle detectors. The model reduces to the third-order time-dependent PDE
$$
\varepsilon\,\partial_t \Delta V_{\mathrm{w}}(t,x)
+ \operatorname{div}\!\big(\sigma(x)\nabla V_{\mathrm{w}}(t,x)\big)=0
\quad\text{in } [0,T]\times\Omega,
$$
supplemented with mixed Neumann-Dirichlet boundary conditions, possibly degenerate.  
Our analysis is based on the decomposition of the weighting potential into a static and a dynamic component. The static part solves a uniformly elliptic mixed boundary value problem, while the dynamic part satisfies a degenerate parabolic Cauchy problem.
We also establish interior regularity results.
\end{abstract}



\maketitle

\section{Introduction}

One of the main challenges in nuclear and sub-nuclear physics is the accurate detection of charged particles produced in high-energy collisions. These measurements are essential for testing the predictions of the Standard Model and for exploring possible physics beyond it. Major experimental programs addressing these questions are carried out at the CERN (European Council for Nuclear Research), in particular at the Large Hadron Collider (LHC), which is currently the largest particle accelerator in the world. The development of next-generation tracking detectors for the High-Luminosity LHC \cite{DaVia:2012ay}, and future particle colliders such as the FCC-hh \cite{FCC:2018vvp} and the Muon Collider \cite{InternationalMuonCollider:2024jyv} requires devices capable of operating under extremely high radiation levels while preserving excellent spatial and temporal resolution.

Among the technologies employed in high-energy physics experiments, \textit{resistive solid-state particle detectors} play a fundamental role in tracking ionizing particles produced in accelerator collisions. These devices typically consist of a semiconductor volume, commonly silicon, equipped with two or more electrodes placed either on the boundaries or embedded within the bulk material (in this last case, they are called 3D solid-state particle detectors).
In this context, diamond sensors represent promising technology due to some of their properties, such as high carrier mobility, radiation hardness and the possibility to realize many 3D geometry through laser graphitization of the bulk.
For these reasons, although our mathematical analysis applies to general resistive solid‑state detectors, we focus on the diamond-graphite configuration as a representative and technologically relevant case of study.

A charged particle traversing a material deposits part of its energy, enabling the creation of electron-hole pairs, also referred as charge carriers.
Under the action of the electric field generated by the voltage applied to the electrodes, charge carriers drift through the bulk along trajectories determined by the electric field configuration, reaching the electrodes.
The electrical signal generated in a detector with resistive elements by the motion of charge carriers can be described using the time-dependent version of the \textit{Ramo-Shockley theorem} for conductive media \cite{Rigler:2e004jh}, which relates the current $i(t)$ induced on a given electrode to the trajectory $(x(t),\dot{x}(t))$ in the phase space of a moving elementary charge $q$ and to the intrinsic electric field inside the semiconductor, the so-called (time-dependent) \textit{weighting field} $\vec{\mathbf{E}}_\mathrm{w}(t,x)$. More precisely, the induced current can be written as 
$$i(t)=-\frac{q}{V_0}\int_0^t \langle H(t-s,x(s)), \dot{x}(s)\rangle_{\R^n}\,\d s\quad\text{with}\quad H(t,x)=-\nabla\left(\frac{\de V_\mathrm{w}(t,x) \Theta(t)}{\de t}\right),$$
where $V_0>0$ is the potential applied to the electrode we are considering and all other electrodes are set to ground, $\Theta$ is the Heaviside function and $V_\mathrm{w}(t,x)$ is the so-called (time-dependent) \textit{weighting potential} such that $V_\mathrm{w}(t,x)=-\nabla \vec{\mathbf{E}}_\mathrm{w}(t,x)$. Within this formulation, the influence of the resistivity of the material on the signal formation is encoded in the weighting potential, which is determined as the solution of \textit{Maxwell’s equations in the quasi-static limit} as follows \cite{Rigler:2e004jh, Janssens:2890572}:
\begin{equation}\label{QSMdouble}
\left\{
\begin{array}{ll}
\dsy \e\,\Delta V_\mathrm{w}(t,x)= -\rho(t,x),& \\
\dsy \de_t \rho(t,x)=\div( \sigma(x)\,\nabla V_\mathrm{w}(t,x)),&
\end{array}\right.
\end{equation}
where $\rho(t,x)$ is the charge distribution, $\e=\e_0 \e_r$ is the dielectric constant in the material (which is the same for diamond and graphite) and $\sigma(x)$ is the electrical conductivity of the material. 
The function $\sigma$ is, in general, a bounded, non‑negative, discontinuous function determined by the diamond-graphite geometry.

In this paper, we study \eqref{QSMdouble} treating a unique third-order time-dependent PDE as
\begin{equation}\label{QSM1eq}
\e\de_t\Delta V_\mathrm{w}(t,x) + \div (\sigma(x) \nabla V_\mathrm{w}(t,x)) = 0\quad\text{in $[0,T]\times \Omega$},
\end{equation}
combined with a suitable initial condition (IC) and boundary conditions (BC) of mixed Neumann-Dirichlet type, where $\Omega\subseteq\R^n$ is the whole volume of the detector, which can be assumed to be an open bounded connected set. Moreover, to ensure that both the trace operator $u\mapsto u|_{\de\Omega}$ and the generalized normal trace operator $u\mapsto \de_\nu u|_{\de\Omega}$ are well defined for the data involved in the boundary conditions, we assume that $\Omega$ has the \textit{uniform $C^d$-regularity property} for some $d\geq 2$ (see Lemma \ref{extensiontrace}).

Let $C_+, C_-^i\subseteq \overline{\Omega}$, for $i=1,\ldots,m_e$, be the compact volumes of the electrodes inside $\Omega$, respectively, polarized and grounded. We assume that $\de\Omega=\overline{B_D}\cup B_N$, where 
$$B_D:=\de\Omega\cap \de\Big(C_+\cup \bigcup_{i=1}^{m_e} C_-^i\Big)\quad\text{and}\quad B_N:=\de\Omega\setminus \overline{B_D}$$ 
are non-empty disjoint open regular subsets of $\de\Omega$, called the \textit{Dirichlet boundary} and the \textit{Neumann boundary}, respectively.
Let us denote by $\nu$ the outward normal to the boundary $\de\Omega$.
We consider the following conditions:
\begin{equation}\label{IBCintro}
\left\{
\begin{array}{ll}
V_\mathrm{w}(t=0,x) = \tilde g(x),&\quad\text{in $\overline{\Omega}$},\\
V_\mathrm{w}(t,x) = g(x),&\quad\text{in $[0,T]\,\times B_D$},\\
\de_\nu V_\mathrm{w}(t,x) = 0,&\quad\text{in $[0,T]\,\times B_N$},
\end{array}\right.
\end{equation}
where $\tilde g\in H^2(\Omega)$ is the given initial data satisfying $\de_\nu \tilde g=0$ in $B_N$, and $g\in H^{\frac{3}{2}}(\de\Omega)$ is the trace of $\tilde g$ in $\de\Omega$ (see Lemma \ref{extensiontrace}).

\medskip
In this paper, we study the quasi-static Maxwell BVP from a purely mathematical perspective, establishing the existence and uniqueness of solutions to problem \eqref{QSM1eq} with initial and boundary conditions \eqref{IBCintro}. Our approach is based on the decomposition 
$$V_\mathrm{w}(t,x)=\omega(x)+u(t,x)$$
which separates the weighting potential into a static and a dynamic contribution.

The static component $\omega(x)$ solves a uniformly elliptic problem with mixed Neumann-Dirichlet boundary conditions, encoding the condition $\omega|_{B_D}\equiv g$ prescribed by the Ramo-Shockley theorem. The time-dependent component $u(t,x)$ solves a parabolic Cauchy BVP with homogeneous Neumann-Dirichlet boundary conditions and initial datum $u(0,x)=\tilde g(x)-\omega(x)$, describing the evolution of the weighting potential $V_\mathrm{w}(t,x)$ starting from the static configuration.

The existence and uniqueness of $\omega$ follows from standard elliptic techniques. The analysis of $u$ is more delicate: since the coefficient $\sigma$ may vanish in subsets of $\Omega$ with positive measure, the equation \eqref{QSM1eq} is degenerate. To overcome this difficulty, we introduce the functional variable $w(t,\cdot):=-\Delta u(t,\cdot) \in X^*$, where $X$ is a suitable Hilbert space. Using the invertibility of the homogeneous Neumann–Dirichlet Laplacian (with respect to the decomposition of the boundary $\de\Omega=\overline{B_D}\cup B_N$, see Appendix \ref{sec:A1}), we show that $w$ satisfies a non‑homogeneous abstract Cauchy problem with initial data depending on $\omega,\tilde g$. This problem is solved by means of the theory of Hille–Yosida operators and monotone operator methods (see \cite{brezis}, \cite{sinestrari}). Once $w$ is constructed, the dynamic component is recovered as 
$$u(t,\cdot)=(-\Delta_{ND})^{-1}w(t,\cdot)$$
with the regularity described in \cite{sinestrari}. In particular, we obtain the existence and uniqueness of 
$$V_\mathrm{w}\in C^1([0,T];H^1(\Omega))$$
with continuity estimates depending only on the initial data $\tilde g$ and on the structural data $\sigma,\e$.

We then investigate higher regularity properties of the solution. For the static component $\omega$, we prove local H\"older continuity in $\Omega$ by means of De Giorgi--Nash--Moser type estimates for elliptic equations, and smoothness in every open domain $\Omega'\subseteq\Omega$ where the conductivity $\sigma$ is constant, by the hypoellipticity of the Laplacian.

For the dynamic component $u$, we show that the regularity of the initial datum propagates in time within each domain where $\sigma$ is constant. More precisely, if $\sigma\equiv 0$ in an open set $\Omega_0\subseteq\Omega$, then $u'\in C^0([0,T];C^\infty(\Omega_0))$, and $u$ inherits local Sobolev and H\"older regularity from the initial datum. If instead $\sigma\equiv \sigma_1>0$ in an open set $\Omega_1\subseteq\Omega$, analogous local Sobolev and H\"older estimates hold for $u$, with bounds depending explicitly on $\sigma_1/\varepsilon$. These results are obtained by combining interior elliptic estimates in the constant-conductivity regions with the regularity theory for abstract evolution equations generated by Hille--Yosida operators.

\medskip

The paper is organized as follows.  

In Section \ref{sec:PreRes} we recall the notion of trace operators for Sobolev spaces, the definition and properties of monotone operators, and the standard notation for (local) Sobolev spaces involving time that will be needed to formulate, in the same section, the results concerning the Hille–Yosida operators of \cite{sinestrari}, including the notions of integral, $L^{p}$, and $C^{0}$ solutions, together with the corresponding existence and uniqueness theorems.

In Section \ref{sec:QSMeq}, we describe in detail the problems associated with the static component $\omega$ and the dynamic component $u$, and we state the main existence and uniqueness result for $V$, with related continuity estimate. 


Section \ref{regularity} contains the regularity results for both $u$ and $\omega$.

Finally, Section \ref{realcase} is devoted to the description of a realistic case study involving a 3D diamond detector configuration analyzed in \cite{Anderlini_2026}.

In Appendix \ref{sec:A1}, we recall the existence and uniqueness theory for the homogeneous mixed Neumann–Dirichlet problem associated with uniformly elliptic divergence–form operators.

\bigskip

\section{Preliminaries}\label{sec:PreRes}

\subsection{Sobolev spaces}
Let $1\leq p\leq \infty, m\in\N, 0<s<1$ and let $p'\in\,[1,\infty]$ be the H\"older conjugate of $p$, i.e.\,$1/p+1/p'=1$.
We use standard notation for Lebesgue spaces $L^p(\Omega)$, for Sobolev spaces $W^{m,p}(\Omega), W^{m,p}_0(\Omega)$ if $1\leq p\leq \infty$, and for $W^{-m,p}(\Omega)$ if $1< p<\infty$ (we endow the last space with the operator norm). We denote by $W^{s,p}(\de\Omega)$, with $1<p<\infty$, the standard fractional Sobolev space, and by $W^{s,p'}(\de\Omega)$ its dual space, endowed with the operator norm. 
%
%
We set
$$H^{\pm m}(\Omega):=W^{\pm m,2}(\Omega),\,\,\,H^m_0(\Omega):=W^{m,2}_0(\Omega),\,\,\,H^{\pm s}(\de\Omega):=W^{\pm s,2}(\de\Omega)$$ 
endowed with the natural inner products. We also denote $\|\cdot\|_{m}:=\|\cdot\|_{H^m(\Omega)}$ and by $(\cdot,\cdot)_{m}$ the inner product of $H^{m}(\Omega)$, for every $m\in\Z$.

In order to use the trace operators, we need to recall the following property that we require on the boundary $\de\Omega$.

\begin{defi}[Uniform $C^{d}$–regularity property, see Chapter $5$ in \cite{adams}]\label{uniformCdproperty}
Let $\Omega\subset\mathbb{R}^{n}$ be an open set and $d\in\N,d\geq 1$. We say that $\partial\Omega$ has the uniform $C^{d}$–regularity property if there exist a locally finite open covering $\{U_{j}\}_{j\in\mathbb{N}}$ of $\partial\Omega$ and a family $\{\Phi_j\}_{j\in\N}$ of injective maps $\Phi_{j} : U_{j} \to B_1(0)\subseteq\R^n$ of class $C^d$ such that:
\begin{enumerate}[(i)]
\item there exists $\delta>0$ such that
$\bigcup_{j=1}^{\infty} \Phi^{-1}_{j}\big( \{ y\in\mathbb{R}^{n} : |y|<\tfrac12 \} \big)\supset \Omega_{\delta},$ 
where $\Omega_{\delta} := \{ x\in\Omega : \operatorname{dist}(x,\partial\Omega)<\delta \}$;
\item there exists $R\in\mathbb{N}$ such that every collection of $R+1$ sets among the $U_{j}$ has empty intersection;
\item for each $j$, $\Phi_{j}(U_{j}\cap\Omega) = \{ y\in B : y_{n}>0 \};$
\item writing $\Phi_{j}=(\phi_{j,1},\dots,\phi_{j,n})$ and 
$\Phi^{-1}_{j}=(\psi_{j,1},\dots,\psi_{j,n})$, there exists $M>0$ such that
for all multi–indices $\alpha$ with $|\alpha|\le d$, for all $i=1,\dots,n$ and all $j$,
$$|D^{\alpha}\phi_{j,i}(x)| \le M, \quad |D^{\alpha}\psi_{j,i}(y)| \le M, \quad \text{for all } x\in U_{j},\,y\in B_1(0).$$
\end{enumerate}
\end{defi}

We recall the definition of the trace operator and the generalized normal trace operator.
\begin{lemma}[see Theorem $7.53$ in \cite{adams}]\label{extensiontrace}
Let $1<p<\infty$ and let $\Omega$ satisfy the condition of the uniform $C^{d}$-regularity property with $d\geq 2$. If $u\in C_0^\infty(\R^n)$, let us denote the mapping
$$u\mapsto \gamma u:=(\gamma_0 u, \gamma_1 u):=(u|_{\de\Omega}, \de_\nu u|_{\de\Omega}).$$
Then, $\gamma$ extends by continuity to the following surjective bounded linear map 
$$\gamma: 
W^{2,p}(\Omega) \longrightarrow W^{2-\frac{1}{p},p}(\de\Omega)\times W^{1-\frac{1}{p},p}(\de\Omega).$$
The operator $\gamma_0$ is called trace operator and $\gamma_1$ is called generalized normal trace operator. Moreover, $\gamma_0$ extends continuously also as a surjective bounded linear map
$$\gamma_0: W^{1,p}(\Omega)\longrightarrow W^{1-\frac{1}{p},p}(\de\Omega).$$
\end{lemma}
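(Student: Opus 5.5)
The plan is to follow the classical localization-and-flattening scheme of \cite{adams}. The boundary map is first built on the half-space and then transported to $\Omega$ through the charts $\Phi_j$ of Definition \ref{uniformCdproperty}, using a partition of unity. The uniform bounds (ii) and (iv) keep all constants independent of $j$.

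\textbf{Step 1: the half-space.} Let $\R^n_+=\{y_n>0\}$ and $u\in C_0^\infty(\R^n)$. The estimate
$$\|u(\cdot,0)\|_{W^{1-1/p,p}(\R^{n-1})}\le C\|u\|_{W^{1,p}(\R^n_+)}$$
follows from the Gagliardo seminorm characterization. One writes $u(x',0)-u(z',0)$ as a telescoping sum through the point $(\tfrac{x'+z'}{2},|x'-z'|)$ and applies Hardy's inequality. The same argument applied to $\de_n u$ and to tangential derivatives of $u$ gives
$$\|\gamma u\|_{W^{2-1/p,p}\times W^{1-1/p,p}}\le C\|u\|_{W^{2,p}(\R^n_+)}.$$

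\textbf{Step 2: surjectivity on the half-space.} I construct an explicit right inverse. Fix $\varphi,\psi\in C_0^\infty(\R)$ with $\varphi(0)=1$, $\varphi'(0)=0$, $\psi(0)=0$, $\psi'(0)=1$. For data $(g_0,g_1)$ set
$$Eu(x',x_n)=\varphi(x_n)\,(P_{x_n}g_0)(x')+\psi(x_n)\,(P_{x_n}g_1)(x'),$$
where $P_t$ is a mollification at scale $t$ (for example the Poisson kernel or $\rho_t*\cdot$). The standard estimates on $\nabla^k P_t g$ in weighted $L^p(t^{\cdot}\,\d t)$ norms, which are equivalent to the Besov/Slobodeckij norms, show that $E$ maps boundedly into $W^{2,p}$, and that $\gamma E=\mathrm{id}$ on smooth data. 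I expect this step to be the main technical obstacle: one needs the equivalence between the trace-space norm and the weighted square-function type norms for all $1<p<\infty$. I would cite the trace theory in \cite{adams} (Besov spaces, Chapter 7) rather than reprove it.

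\textbf{Step 3: globalization.} Take a partition of unity $\{\theta_j\}$ subordinate to $\{U_j\}$, together with an interior cutoff, whose derivatives up to order $2$ are bounded uniformly in $j$. Condition (i) guarantees this is possible, and condition (ii) gives bounded overlap. For $u\in C_0^\infty(\R^n)$ define
$$\gamma_0 u=\sum_j\bigl((\theta_j u)\circ\Phi_j^{-1}\bigr)(\cdot,0)\circ\Phi_j .$$
The normal derivative transforms into a combination of $\de_n$ and tangential derivatives, with coefficients in $C^{d-1}\subseteq C^1$ because $d\ge2$. By (iv), composition with $\Phi_j^{\pm1}$ preserves $W^{k,p}$ for $k\le2$, and it preserves the fractional norms on the boundary, with uniform constants. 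Summing with bounded overlap gives the global estimate.

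Density of $C_0^\infty(\R^n)|_\Omega$ in $W^{k,p}(\Omega)$ then yields the continuous extension. This density holds for uniformly $C^d$ domains by the segment property. The right inverse is obtained by gluing the local extensions $E$ pulled back through the $\Phi_j$, after localizing the data by $\theta_j$. The normal-derivative datum must first be corrected by the tangential terms generated by the change of variables: I would solve triangularly, fixing the $g_0$ part first and then adjusting the $g_1$ part. The $W^{1,p}$ statement for $\gamma_0$ is exactly the first-order case of Steps 1–3.
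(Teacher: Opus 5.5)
Your outline is the standard localization–flattening argument behind the cited Theorem $7.53$ of \cite{adams}; the paper gives no proof of its own beyond that citation. You prove a half-space trace estimate and a right inverse via the Besov/Slobodeckij characterization, then globalize with uniform charts, a partition of unity with uniformly bounded derivatives, and density from the segment property. The globalization in Step 3, including the triangular correction of the normal datum, is fine.

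There is, however, a concrete error in Step 2: the claim $\gamma E=\mathrm{id}$ on smooth data fails for one of the kernels you propose. Since $\varphi'(0)=0$, $\psi(0)=0$ and $\psi'(0)=1$, differentiating your formula at $x_n=0$ gives
$$\gamma_1E(g_0,g_1)=g_1+\partial_t(P_tg_0)|_{t=0},$$
not $g_1$.

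For the Poisson semigroup, $\widehat{P_tg_0}(\xi)=e^{-t|\xi|}\widehat{g_0}(\xi)$, so the extra term equals $-(-\Delta')^{1/2}g_0\neq0$.

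For $P_t=\rho_t*\cdot$, writing $(\rho_t*g_0)(x')=\int\rho(z)\,g_0(x'-tz)\,dz$ gives $\partial_t(\rho_t*g_0)|_{t=0}=-\sum_i\big(\int z_i\rho(z)\,dz\big)\partial_ig_0$. This vanishes only when $\rho$ has zero first moments, for instance when $\rho$ is even.

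The repair is one line. Either take $\rho$ even, or replace $E$ by
$$E'(g_0,g_1):=E\big(g_0,\,g_1-\gamma_1E(g_0,0)\big).$$
This is legitimate because $E(g_0,0)\in W^{2,p}$ together with Step 1 gives $\|\gamma_1E(g_0,0)\|_{W^{1-1/p,p}}\lesssim\|g_0\|_{W^{2-1/p,p}}$. It is the same triangular device you already use in Step 3.

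A smaller point of terminology: the characterization you need is
$$\Big(\int_0^1 t^{(k-s)p}\|\nabla^kP_tg\|_{L^p}^p\,\frac{dt}{t}\Big)^{1/p},$$
which is an $L^p$-in-$t$ (Besov) norm. Genuine square-function norms characterize Triebel–Lizorkin or Bessel-potential spaces, and for $p\neq2$ those are not the trace spaces here.
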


In the sequel, unless otherwise specified, we will consider $\Omega\subseteq\R^n$ as an open bounded set that satisfies the condition of the uniform $C^d$-regularity property with $d\geq 2$ in order to make sense of the trace operator $\gamma=(\gamma_0,\gamma_1)$.

\bigskip
If $(X,\|\cdot\|)$ is a Banach space and $T>0$, we use the standard notation for Sobolev and $C^m$ spaces involving time, denoted by $W^{m,p}(0,T;X), C^m([0,T];X)$, respectively. 

Moreover, let us consider the following local spaces. Let $m,k\in\N, \gamma\in [0,1[$. Consider an exhaustion by compact sets $\{\Omega_j\}_{j\in\N}\subseteq \Omega$ of $\Omega$ such that $\Omega_j\Subset \Omega_{j+1}\Subset\Omega$ for every $j\in\N$ and $\bigcup_{j\in\N}\Omega_j=\Omega$. We define the following Fréchet spaces:

\begin{itemize}
\item $C^k([0,T]; H^m_{loc}(\Omega))$ as the space of all functions $u:[0,T]\to H^m_{loc}(\Omega)$ such that for every compact set $K\subset \Omega$, we have $u\in C^k([0,T];H^m(K))$, endowed with the family of seminorms 
$$|u|_j:=\|u\|_{C^k([0,T];H^m(\Omega_j))}=\sum_{\ell=0}^k\sup_{t\in [0,T]}\|\de^\ell_t u(t,\cdot)\|_{H^m(\Omega_j)}\quad \forall\,j\in\N.$$
\item $C^k([0,T]; C^{m,\gamma}_{loc}(\Omega))$ as the space of all functions $u:[0,T]\to C^{m,\gamma}_{loc}(\Omega)$ such that for every compact set $K\subset \Omega$, we have $u\in C^k([0,T];C^{m,\gamma}(K))$, endowed with the family of seminorms 
$$|u|_j:=\|u\|_{C^k([0,T];C^{m,\gamma}(\Omega_j))}=\sum_{\ell=0}^k\sup_{t\in [0,T]}\|\de^\ell_t u(t,\cdot)\|_{C^{m,\gamma}(\Omega_j)}\quad \forall\,j\in\N.$$
\item $C^k([0,T]; C^{\infty}(\Omega))$ as the space of all functions $u:[0,T]\to C^{\infty}(\Omega)$ such that for every compact set $K\subset \Omega$ and $m\in\N$, we have $u\in C^k([0,T];C^{m}(K))$, endowed with the family of seminorms 
$$|u|_j:=\|u\|_{C^k([0,T];C^{j}(\Omega_j))}=\sum_{\ell=0}^k\sup_{t\in [0,T]}\|\de^\ell_t u(t,\cdot)\|_{C^{j}(\Omega_j)}\quad \forall\,j\in\N.$$
\end{itemize}

\medskip
\subsection{Monotone operators}\label{monotoneOp}

We recall the following definition of (maximal) monotone operators.

\begin{defi}[see \cite{brezis}]
Let $(H,(\cdot,\cdot)_H)$ be a Hilbert space. A linear operator $A:D(A)\subseteq H\to H$ is said to be monotone if it satisfies
$$(Au,u)_H\geq 0,\quad\text{for every $u\in D(A)$}.$$
It is called maximal monotone if, in addition, $R(Id+A)=H$, i.e.\,for every $f\in H$, there exists $u_f\in D(A)$ such that $u_f+A u_f=f.$
\end{defi}

\begin{prop}[see Proposition $7.1$ in \cite{brezis}]\label{propbrezis}
Let $A$ be a maximal monotone operator. Then
\begin{enumerate}
\item $D(A)$ is dense in $H$;
\item $A$ is a closed operator;
\item for every $\lambda>0$, $(Id+\lambda A)$ is a bijective map from $D(A)$ to $H$, $(Id+\lambda A)^{-1}$ is a bounded operator, and $\|(Id+\lambda A)^{-1}\|_{\mathcal{L}(H)}\leq 1$.
\end{enumerate}
\end{prop}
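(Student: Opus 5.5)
The three items follow from the single inequality
\[
\|(Id+\lambda A)u\|_H\,\|u\|_H\ \ge\ ((Id+\lambda A)u,u)_H\ \ge\ \|u\|_H^2 \qquad (u\in D(A),\ \lambda>0),
\]
which is an immediate consequence of monotonicity together with Cauchy--Schwarz. It gives $\|(Id+\lambda A)u\|_H\ge\|u\|_H$, so $Id+\lambda A$ is injective, and any inverse defined on all of $H$ has norm at most $1$. Everything else is bookkeeping.

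\textbf{Item (3).} The substance is surjectivity of $Id+\lambda A$ for every $\lambda>0$. Maximality gives it only for $\lambda=1$, so I would extend it by a perturbation argument.
\begin{itemize}
\item Set $J_1:=(Id+A)^{-1}$. By the basic inequality with $\lambda=1$, $J_1$ is a linear contraction on $H$.
\item Fix $\lambda>0$ and $f\in H$. The equation $u+\lambda Au=f$ is equivalent to
\[
u+Au=\tfrac1\lambda f+\Big(1-\tfrac1\lambda\Big)u,
\qquad\text{that is,}\qquad
u=J_1\Big(\tfrac1\lambda f+\big(1-\tfrac1\lambda\big)u\Big).
\]
\item The right-hand side is a contraction in $u$ with constant $|1-1/\lambda|$. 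This constant is $<1$ precisely when $\lambda>1/2$, so Banach's fixed point theorem gives surjectivity for all $\lambda>1/2$.
\item Iterate: from surjectivity at $\lambda_0$, the same argument with $J_{\lambda_0}$ in place of $J_1$ gives surjectivity for all $\lambda>\lambda_0/2$. Repeating this covers every $\lambda>0$.
\end{itemize}
Since $A$ is linear, the fixed point $u$ lies in $D(A)$ automatically, because it is in the range of $J_1$. Combined with the basic inequality, this gives bijectivity of $Id+\lambda A$ from $D(A)$ to $H$ and $\|(Id+\lambda A)^{-1}\|_{\mathcal{L}(H)}\le 1$.

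\textbf{Item (2).} Closedness follows from the boundedness of $J_1$. Let $u_n\in D(A)$ with $u_n\to u$ and $Au_n\to f$. Then
\[
u_n=J_1(u_n+Au_n)\to J_1(u+f),
\]
so $u=J_1(u+f)\in D(A)$ and $u+Au=u+f$, that is, $Au=f$.

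\textbf{Item (1).} Density is the step that uses monotonicity genuinely beyond the resolvent bound. Suppose $f\perp D(A)$. By (3) with $\lambda=1$, write $f=u+Au$ with $u\in D(A)$. Then
\[
0=(f,u)_H=\|u\|_H^2+(Au,u)_H\ \ge\ \|u\|_H^2,
\]
so $u=0$ and hence $f=0$. I expect the main obstacle to be spotting this trick; after it the conclusion is immediate. The only technical care needed in the whole argument is the iteration in $\lambda$ in the proof of (3).
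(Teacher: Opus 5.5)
Your proposal is correct and is essentially the standard proof of Brezis, Proposition~7.1, which the paper cites without reproducing. It has the same ingredients as that proof: the bound $\|(Id+\lambda A)u\|_H\ge\|u\|_H$, and the contraction-mapping extension of surjectivity from $\lambda_0$ to all $\lambda>\lambda_0/2$, iterated down to every $\lambda>0$. It also proves closedness via the bounded resolvent $(Id+A)^{-1}$, and density by testing $f\perp D(A)$ against the solution of $u+Au=f$.
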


We also recall the following result to prove the surjectivity of an operator.

\begin{teo}[Minty-Browder, see \cite{browder}]\label{teo.browder}
Let $(X,\|\cdot\|_X)$ be a separable reflexive Banach space and let $T:X\to X^\ast$ be a map that is
\begin{enumerate}
\item continuous as a map between topological spaces;
\item monotone, i.e.\,$\langle T(u)-T(v)|u-v\rangle_{X^*, X}\geq 0$, for every $u,v\in X$;
\item coercive, i.e.\,$\dsy \frac{\langle T(u)|u\rangle_{X^*,X}}{\|u\|_X}\to+\infty$ as $\|u\|_X\to +\infty$.
\end{enumerate}
Then, $T$ is surjective.
\end{teo}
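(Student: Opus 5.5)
We prove the Minty--Browder theorem (Theorem \ref{teo.browder}) by a Galerkin approximation combined with Minty's trick. Fix $f\in X^*$; we seek $u\in X$ with $T(u)=f$. Replacing $T$ by $T-f$ keeps continuity and monotonicity. Coercivity also survives, since $\langle T(u)-f|u\rangle\geq \langle T(u)|u\rangle-\|f\|_{X^*}\|u\|_X$. So it suffices to show that $0\in R(T)$ for any continuous, monotone, coercive $T$. By coercivity there is $R>0$ with $\langle T(u)|u\rangle>0$ whenever $\|u\|_X= R$.

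\emph{Finite-dimensional step.} Separability of $X$ gives an increasing sequence of finite-dimensional subspaces $X_k$ whose union is dense. Let $j_k:X_k\to X$ be the inclusion and set $T_k:=j_k^*\circ T\circ j_k:X_k\to X_k^*$. This map is continuous. Identifying $X_k\cong\R^{N_k}$, it satisfies $\langle T_k(u)|u\rangle>0$ on the sphere $\|u\|=R$. A standard corollary of Brouwer's fixed point theorem (the ``acute angle'' lemma) then yields $u_k\in X_k$ with $\|u_k\|_X\leq R$ and $T_k(u_k)=0$, i.e.\ $\langle T(u_k)|v\rangle=0$ for all $v\in X_k$.

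\emph{Passage to the limit.} This is the main obstacle, because $T$ is only continuous in the norm topology and we will only have weak convergence. Since $X$ is reflexive and $(u_k)$ is bounded, a subsequence converges weakly, $u_k\rightharpoonup u$. We also need boundedness of $T(u_k)$ in $X^*$. For this we use local boundedness of monotone operators: a monotone map defined on all of $X$ is locally bounded, and indeed bounded on bounded sets along such sequences. We prove this by the usual argument by contradiction with the uniform boundedness principle, testing monotonicity against $u_k+ $ small balls. Passing to a further subsequence, $T(u_k)\rightharpoonup^* \chi$. For fixed $v\in X_m$ and $k\geq m$ we have $\langle T(u_k)|v\rangle=0$, so $\chi=0$ on $\bigcup_m X_m$, and by density $\chi=0$. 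Moreover $\langle T(u_k)|u_k\rangle=0$ for every $k$.

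\emph{Minty's trick.} For any $v\in X$, monotonicity gives $0\leq\langle T(u_k)-T(v)|u_k-v\rangle=-\langle T(u_k)|v\rangle-\langle T(v)|u_k-v\rangle$. Letting $k\to\infty$ yields $\langle T(v)|u-v\rangle\leq 0$ for all $v\in X$. Now choose $v=u-tw$ with $t>0$ and $w\in X$, and divide by $t$ to get $\langle T(u-tw)|w\rangle\leq 0$. Letting $t\to0^+$ and using continuity of $T$ (hemicontinuity suffices) gives $\langle T(u)|w\rangle\leq0$ for all $w$. Replacing $w$ by $-w$ gives the reverse inequality, hence $T(u)=0$. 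The delicate point remains the boundedness of $T(u_k)$. If the local-boundedness lemma proves awkward, we can avoid it: define $\chi$ only on $\bigcup_m X_m$ as the pointwise limit along a diagonal subsequence. Minty's inequality, needed only for $v\in\bigcup_m X_m$, together with continuity of $T$ and density then extends to all $v\in X$.
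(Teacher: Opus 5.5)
The paper gives no proof of this statement; it is quoted from \cite{browder}, and your Galerkin--Brouwer--Minty scheme is the classical proof. The reduction to $f=0$, the finite-dimensional step and the closing Minty argument are fine. The gap is in the main-line justification of the bound on $T(u_k)$, which is wrong as stated. It is false in general that an everywhere-defined monotone map is bounded on bounded sets, or along bounded weakly convergent sequences. On $\ell^2$ the map $T(x)=\bigl(2n^2x_n^{2n-1}+x_n\bigr)_{n\ge 1}$ is continuous, monotone (each coordinate is increasing in $x_n$) and coercive, yet $\|T(e_n)\|=2n^2+1$ along the weakly null unit vectors $e_n$. Testing monotonicity on $u_k+$ small balls is circular: it bounds $\langle T(u_k)|h\rangle$ only by $\langle T(u_k+h)|h\rangle$, which is equally uncontrolled. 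What is true is local boundedness at one fixed point, say $\|T(v)\|_{X^*}\le M$ for $\|v\|_X\le r$. To transfer it to $u_k$ you must use $\langle T(u_k)|u_k\rangle=0$, which you record but never exploit. For $\|v\|_X\le r$,
\[
\langle T(u_k)|v\rangle\le\langle T(u_k)|u_k\rangle-\langle T(v)|u_k-v\rangle\le M(R+r),
\]
and since the ball is symmetric, $\|T(u_k)\|_{X^*}\le M(R+r)/r$.

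Your fallback, by contrast, is correct and cleaner, and you should make it the main argument. For $v\in X_m$ and $k\ge m$, both $\langle T(u_k)|u_k\rangle$ and $\langle T(u_k)|v\rangle$ vanish identically, so neither $\chi$ nor a diagonal extraction is needed. Weak convergence alone gives $0\le-\langle T(v)|u_k-v\rangle\to-\langle T(v)|u-v\rangle$. Norm continuity of $T$ and density of $\bigcup_m X_m$ then give $\langle T(v)|u-v\rangle\le 0$ for all $v\in X$, and Minty's trick concludes. The trade-off is that this density step uses the full continuity hypothesis (demicontinuity would also do). So your remark that hemicontinuity suffices no longer covers the whole proof. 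The corrected boundedness route works under hemicontinuity alone, at the price of invoking the nontrivial local boundedness theorem for monotone operators.
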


\medskip
\subsection{Hille-Yosida operators and abstract Cauchy problems}\label{HYop}

In this section, we introduce the definition of solutions for non-homogeneous Cauchy problems following \cite{sinestrari}.
From now on, we consider $1\leq p < \infty$. We want to give some results about the existence and uniqueness of solutions of the following \textit{non-homogeneous Cauchy problem}:
\begin{equation}\label{CauchyPbtime}
\left\{
\begin{array}{ll}
u'(t) = B\,u(t)+f(t),&\quad t\in\,[0,T], \\
u(0)=u_0,&
\end{array}\right.
\end{equation}
considering a particular class of operators $B$ as follows.

\begin{defi}[Hille-Yosida operator]\label{defi.HYoperatorsdefi}
Let $(X,\|\cdot\|)$ be a Banach space. A linear operator $B: D(B)\subseteq X\to X$ is said to be a \textit{Hille-Yosida operator} if there exist $M, \lambda_0\in \R$ such that
\begin{itemize}
\item $(\lambda - B)^{-1}$ is a linear bounded operator on $X$, for every $\lambda>\lambda_0$;
\item $\|(\lambda-\lambda_0)^m\,(\lambda-B)^{-m}\|\leq M$, for every $m\in\N$.
\end{itemize}
\end{defi}

Following \cite{sinestrari}, we give the following definitions of solution of \eqref{CauchyPbtime}.

\begin{defi}[Integral solution]\label{defi.integralsolution}
Let $f\in L^1(0,T; X)$ and $u_0\in X$. We say that $u\in C^0([0,T]; X)$ is a integral solution of \eqref{CauchyPbtime} if $\int_0^t\,u(s)\,\d s\in D(B)$ for every $t\in\,[0,T]$ and
$$u(t)=u_0+B\,\int_0^t\,u(s)\,\d s+\int_0^t\,f(s)\,\d s,\quad\text{for every $t\in\,[0,T]$}.$$
In particular, we have $u(0)=u_0$.
\end{defi}

\begin{defi}[Strict $L^p$-solution]\label{defi.Lpsolution}
Let $f :[0,T]\to X$ and $u_0\in X$. We say that $u\in W^{1,p}(0,T; X)\cap\, L^p(0,T;D(B))$ is a strict $L^p$-solution of \eqref{CauchyPbtime} if \eqref{CauchyPbtime} holds for a.e.\,$t\in\,]0,T[$ and $u(0)=u_0$. If such a solution exists, then $f\in L^p(0,T;X)$.
\end{defi}

\begin{defi}[Strict $C^0$-solution]\label{defi.C0solution}
Let $f :[0,T]\to X$ and $u_0\in X$. We say that $u\in C^1([0,T]; X)\cap\, C^0([0,T];D(B))$ is a strict $C^0$-solution of \eqref{CauchyPbtime} if \eqref{CauchyPbtime} holds for every $t\in\,[0,T]$ and $u(0)=u_0$. If such a solution exists, then $f\in C^0([0,T];X)$ and $u_0\in D(B)$.
\end{defi}
\begin{oss}
By Theorem $2.6$ in \cite{sinestrari}, if $u$ is a strict $C^0$-solution, then $u$ is an integral solution and strict $L^p$-solution, for every $1\leq p<\infty$.
\end{oss}

We are ready to state the following results about the existence and uniqueness of solutions for the problem \eqref{CauchyPbtime}.

\begin{teo}[see Theorem $5.1$ in \cite{sinestrari}]\label{teoLpsolution}
Let $(X,\|\cdot\|)$ be a Banach space and let $B:D(B)\subseteq X\to X$ be a Hille-Yosida operator. Given $u_0\in \overline{D(B)}$ and $f\in L^p(0,T;X)$, there exists a unique strict $L^p$ solution $u$ of \eqref{CauchyPbtime} such that
\begin{gather}\label{stimeLp}
\begin{split}
\|u(t)\|&\leq M\,e^{\lambda_0\,t}\,\left(\|u_0\|+\int_0^t\,e^{-\lambda_0\,s}\,\|f(s)\|\,\d s\right),\quad \text{for every $t\in\,[0,T]$},
\end{split}
\end{gather}
where $M,\lambda_0\in\R$ are as in Definition \ref{defi.HYoperatorsdefi}.
\end{teo}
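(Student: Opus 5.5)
The plan is to follow the standard route for Hille--Yosida operators that need not be densely defined: restrict to the closure of the domain, where the part of $B$ generates a $C_0$-semigroup, and treat the forcing $f$ through an extrapolation device.

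\textbf{Step 1: the part of $B$ in $X_0:=\overline{D(B)}$.} Define $B_0u:=Bu$ on
$$D(B_0):=\{u\in D(B):Bu\in X_0\}.$$
For $\lambda>\lambda_0$ the resolvent $(\lambda-B)^{-1}$ maps $X$ into $D(B)\subseteq X_0$. Hence it restricts to $(\lambda-B_0)^{-1}$ on $X_0$, and these restrictions satisfy the same bounds
$$\|(\lambda-\lambda_0)^m(\lambda-B_0)^{-m}\|\le M .$$
Next I show that $D(B_0)$ is dense in $X_0$. For $x\in D(B)$ we have
$$\lambda(\lambda-B)^{-1}x-x=(\lambda-B)^{-1}Bx\to 0,$$
because $\|(\lambda-B)^{-1}\|\le M/(\lambda-\lambda_0)$. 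The family $\lambda(\lambda-B)^{-1}$ is uniformly bounded, so the convergence extends to all $x\in X_0$. Since $\lambda(\lambda-B)^{-1}x\in D(B_0)$, density follows. The classical Hille--Yosida theorem then shows that $B_0$ generates a $C_0$-semigroup $S(t)$ on $X_0$ with $\|S(t)\|\le Me^{\lambda_0 t}$.

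\textbf{Step 2: existence via the integrated semigroup.} Because $f$ takes values in $X$ and not only in $X_0$, I use the once-integrated semigroup. This is the family $V(t)x:=\lim_{\lambda\to\infty}\int_0^t S(s)\,\lambda(\lambda-B)^{-1}x\,\d s$, which is equivalent to the expression $V(t)=(\lambda-B_0)\int_0^tS(s)\,\d s\,(\lambda-B)^{-1}$. The candidate solution is given by the variation-of-constants formula
$$u(t)=S(t)u_0+\lim_{\lambda\to\infty}\int_0^tS(t-s)\,\lambda(\lambda-B)^{-1}f(s)\,\d s .$$
I first treat smooth data: $f\in W^{1,p}(0,T;X)$ and $u_0\in D(B)$ with $Bu_0+f(0)\in X_0$. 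In this case an integration by parts shows that $u\in W^{1,p}(0,T;X)\cap L^p(0,T;D(B))$ and that $u'=Bu+f$ holds almost everywhere.

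\textbf{Step 3: the estimate \eqref{stimeLp} and the general case.} Set $f_\lambda:=\lambda(\lambda-B)^{-1}f$, which takes values in $X_0$. The bound $\|S(t-s)f_\lambda(s)\|\le Me^{\lambda_0(t-s)}\|\lambda(\lambda-B)^{-1}\|\,\|f(s)\|$ holds, and $\|\lambda(\lambda-B)^{-1}\|\le M\lambda/(\lambda-\lambda_0)\to M$. This gives the estimate with the constant $M$, possibly after the Hille--Yosida renorming argument is used to obtain $M$ rather than $M^2$. For general data $u_0\in X_0$, $f\in L^p(0,T;X)$, I approximate by smooth data and pass to the limit using the estimate. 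Then I verify that the limit has the strict $L^p$ regularity. This is done by writing $u=(\lambda-B)^{-1}\bigl(\lambda u+f-u'\bigr)$ and using closedness of $B$ to identify $Bu$ in $L^p(0,T;X)$.

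\textbf{Step 4: uniqueness and the main obstacle.} For uniqueness, let $v$ be the difference of two solutions; it solves $v'=Bv$ almost everywhere with $v(0)=0$. Applying $(\lambda-B)^{-1}$ gives a function in $X_0$ solving the $C_0$ problem for $B_0$. That solution is therefore $S(t)\cdot 0=0$, so $v=0$. I expect the main obstacle to be Step 3. The difficulty is that for merely $L^p$ forcing the convolution $S*f$ need not lie in $D(B)$, and it is at this point that the Hille--Yosida hypothesis, and not just generation, must be used quantitatively. In practice I would simply cite Theorem 5.1 of \cite{sinestrari} for this regularity.
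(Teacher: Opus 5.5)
Your Steps 1, 2 and 4 are fine in outline, but Step 3 has a genuine gap that cannot be repaired. With only the stated hypotheses ($u_0\in\overline{D(B)}$, $f\in L^p(0,T;X)$), the conclusion is false.

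Your approximation argument gives convergence only in $C([0,T];X)$. The estimate \eqref{stimeLp} controls $\sup_t\|u_n(t)-u(t)\|$, but it gives no uniform bound on $u_n'$ or $Bu_n$ in $L^p(0,T;X)$. What you actually obtain in the limit, using closedness of $B$ in $u_n(t)=u_{0,n}+B\int_0^tu_n+\int_0^tf_n$, is the integral solution of Definition \ref{defi.integralsolution}. The identity $u=(\lambda-B)^{-1}(\lambda u+f-u')$ presupposes $u'\in L^p(0,T;X)$, which is exactly what has to be shown, so that step is circular. Nor can the Hille--Yosida bounds be ``used quantitatively'' to fill it. On $\overline{D(B)}$ they are equivalent to generation of a $C_0$-semigroup, and strict $L^p$-regularity already fails for densely defined generators of contraction groups.

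Here is a counterexample. Let $X=L^2(\mathbb{R})$, $B=\frac{d}{dx}$, $D(B)=H^1(\mathbb{R})$, so $M=1$, $\lambda_0=0$ and $S(t)g=g(\cdot+t)$.
\begin{itemize}
\item A strict $L^p$-solution is an integral solution: integrate $u'=Bu+f$ and use $B\int_0^tu=\int_0^tBu$. By uniqueness of integral solutions it equals $S(t)u_0+\int_0^tS(t-s)f(s)\,ds$.
\item With $f=0$ and $u_0=g\in L^2(\mathbb{R})\setminus H^1(\mathbb{R})$, this is $g(\cdot+t)\notin D(B)$ for every $t$.
\item Even with $u_0=0$ and $f(s)=g(\cdot+s)$, it is $t\,g(\cdot+t)\notin D(B)$ for every $t>0$.
\end{itemize}
So no strict $L^p$-solution exists.

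What your Steps 1--3 do prove is existence and uniqueness of the integral solution with \eqref{stimeLp}, with constant $M$ after renorming. Strict $L^p$-regularity needs more. One option is data as in Theorem \ref{teo.exuniquesolHYthnonhom}. Another is $B$ bounded, where $D(B)=X$ and $u'=Bu+f\in L^p(0,T;X)$ is immediate. The bounded case is the only one in which the paper actually invokes this result, with $B=-\frac{\sigma_1}{\varepsilon}Id$ on $L^2(\Omega_1)$.

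The paper offers no proof beyond citing \cite{sinestrari}, so falling back on that citation does not close the gap either. The cited theorem must carry different hypotheses or definitions than the version reproduced here, and those are what need to be checked.
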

\medskip

\begin{teo}[see Theorem $4.1$ in \cite{sinestrari}]\label{teo.exuniquesolHYthnonhom}
Let $(X,\|\cdot\|)$ be a Banach space and let $B:D(B)\subseteq X\to X$ be a Hille-Yosida operator. Given $u_0\in D(B)$ and $f\in W^{1,1}(0,T;X)$ such that 
$$u_1:=Bu_0+f(0)\in \overline{D(B)},$$
there exists a unique strict $C^0$ $($and so integral and $L^p$ for every $1\leq p<\infty)$ solution $u$ of \eqref{CauchyPbtime} such that
\begin{gather}\label{eq.estimateCauchytime}
\begin{split}
\|u(t)\|&\leq M\,e^{\lambda_0\,t}\,\left(\|u_0\|+\int_0^t\,e^{-\lambda_0\,s}\,\|f(s)\|\,\d s\right),\quad \text{for every $t\in\,[0,T]$},\\
\|u'(t)\|&\leq M\,e^{\lambda_0\,t}\,\left(\|u_1\|+\int_0^t\,e^{-\lambda_0\,s}\,\|f'(s)\|\,\d s\right),\quad \text{for every $t\in\,[0,T]$},
\end{split}
\end{gather}
where $M,\lambda_0\in\R$ are as in Definition \ref{defi.HYoperatorsdefi}.
\end{teo}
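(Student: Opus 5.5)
The plan is to reduce the statement to Theorem \ref{teoLpsolution} applied to the ``differentiated'' problem. Formally, $v:=u'$ should solve $v'=Bv+f'$ with $v(0)=Bu_0+f(0)=u_1$. Both hypotheses needed there are available: $u_1\in\overline{D(B)}$ by assumption, and $f'\in L^1(0,T;X)$ because $f\in W^{1,1}(0,T;X)$. So Theorem \ref{teoLpsolution} with $p=1$ gives a unique strict $L^1$-solution $v\in W^{1,1}(0,T;X)\cap L^1(0,T;D(B))$, in particular $v\in C^0([0,T];X)$, together with the bound
$$\|v(t)\|\leq M e^{\lambda_0 t}\Big(\|u_1\|+\int_0^t e^{-\lambda_0 s}\|f'(s)\|\,\d s\Big).$$
I would then define
$$u(t):=u_0+\int_0^t v(s)\,\d s,$$
so that $u\in C^1([0,T];X)$, $u(0)=u_0$ and $u'=v$. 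The bound on $v$ is then exactly the second estimate in \eqref{eq.estimateCauchytime}.

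The key step is showing that $u(t)\in D(B)$ with $Bu(t)=v(t)-f(t)$. First I need that $v$ is also an integral solution. Since $B$ is closed (it has a bounded resolvent) and $v\in L^1(0,T;D(B))$, Bochner integrals commute with $B$. Integrating $v'=Bv+f'$ over $[0,t]$ then gives $\int_0^t v\in D(B)$ and
$$v(t)=u_1+B\int_0^t v(s)\,\d s+f(t)-f(0).$$
Substituting $u_1=Bu_0+f(0)$ and using $u_0\in D(B)$, this becomes $u(t)=u_0+\int_0^t v\in D(B)$ with $Bu(t)=Bu_0+B\int_0^t v=v(t)-f(t)$. Therefore $u'(t)=v(t)=Bu(t)+f(t)$ for every $t$. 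Moreover $Bu=v-f$ is continuous, since $W^{1,1}(0,T;X)\hookrightarrow C^0([0,T];X)$. Hence $u\in C^0([0,T];D(B))$ for the graph norm, and $u$ is a strict $C^0$-solution.

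For uniqueness and the first estimate, note that any strict $C^0$-solution is a strict $L^p$-solution for every $p$, as recorded in the Remark after Definition \ref{defi.C0solution}. Also $u_0\in D(B)\subseteq\overline{D(B)}$ and $f\in L^p(0,T;X)$. So the uniqueness part of Theorem \ref{teoLpsolution} yields uniqueness, and estimate \eqref{stimeLp} gives the first line of \eqref{eq.estimateCauchytime}.

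I expect the main obstacle to be the justification that a strict $L^1$-solution $v$ satisfies the integrated identity, i.e.\ that $\int_0^t Bv=B\int_0^t v$ and $\int_0^t v'=v(t)-v(0)$ in $X$. This relies on the closedness of $B$ (Hille's theorem on Bochner integrals) and on the absolute continuity of $W^{1,1}$ functions. Everything else is bookkeeping.
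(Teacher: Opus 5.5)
\textbf{There is a gap in the first step; the repair is immediate.}

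Your architecture is the standard route: solve the differentiated problem for $v=u'$, then recover $u=u_0+\int_0^t v$. The algebra giving $u(t)\in D(B)$ with $Bu(t)=v(t)-f(t)$ is fine, and so are the continuity of $Bu$ and the uniqueness argument.

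The problem is the first step. You use Theorem \ref{teoLpsolution} to obtain a \emph{strict} $L^1$-solution $v\in W^{1,1}(0,T;X)\cap L^1(0,T;D(B))$ from data $u_1\in\overline{D(B)}$ and $f'\in L^1(0,T;X)$ only. The paper does state Theorem \ref{teoLpsolution} in that form, but for unbounded Hille--Yosida operators it cannot hold.

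Here is a counterexample:
\begin{itemize}
\item Take $X=L^2(\R)$ and $B=\frac{d}{dx}$ with $D(B)=H^1(\R)$. This generates the translation group, with $M=1$ and $\lambda_0=0$.
\item Take $f\equiv 0$ and $u_0\in L^2(\R)\setminus H^1(\R)$, so $u_0\in\overline{D(B)}$.
\item Any strict $L^1$-solution becomes an integral solution after integrating and applying Hille's theorem.
\item Integral solutions are unique, so this solution equals $u_0(\cdot+t)$. That function lies outside $D(B)$ for every $t$, so no strict solution exists.
\end{itemize}
For data in $\overline{D(B)}\times L^1(0,T;X)$, the result actually available is existence of a unique \emph{integral} solution satisfying \eqref{stimeLp}. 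Such a $v$ need not lie in $L^1(0,T;D(B))$ or in $W^{1,1}(0,T;X)$. So the step you flagged as the main obstacle (commuting $B$ with the Bochner integral) has nothing to act on. For bounded $B$, as in the paper's application where $D(B)=X^*$, your reasoning is fine, but the statement concerns general Hille--Yosida operators.

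The fix is immediate and actually shortens your argument. Let $v$ be the integral solution of $v'=Bv+f'$, $v(0)=u_1$. Definition \ref{defi.integralsolution}, together with $\int_0^t f'(s)\,\d s=f(t)-f(0)$, gives directly:
\begin{itemize}
\item $v\in C^0([0,T];X)$;
\item $\int_0^t v(s)\,\d s\in D(B)$;
\item $v(t)=u_1+B\int_0^t v(s)\,\d s+f(t)-f(0)$.
\end{itemize}
The last identity is exactly the one you wanted to derive. The integral-solution estimate then gives the bound on $u'=v$, and the rest of your proof goes through verbatim. For uniqueness and the first estimate, it suffices that every strict $C^0$-solution is an integral solution, and that integral solutions are unique and satisfy \eqref{stimeLp}.
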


\bigskip

\section{Weak solutions to Quasi-static Maxwell BVP}\label{sec:QSMeq}

From now on, for notational convenience, we drop the dependence of the weighting potential $V_\mathrm{w}$ on $\mathrm{w}$.
We consider the following \textit{Quasi-static Maxwell boundary value problem with initial data}:
\begin{equation}\label{eq.BVPIC}
\left\{
\begin{array}{ll}
\e \de_t\Delta V(t,x) + \div (\sigma(x) \nabla V(t,x)) = 0,&\quad\text{in $[0,T]\,\times \Omega$}, \\
V(t=0,x) = \tilde g(x),&\quad\text{in $\overline{\Omega}$},\\
V(t,x) = g(x),&\quad\text{in $[0,T]\,\times B_D$},\\
\de_\nu V(t,x) = 0,&\quad\text{in $[0,T]\,\times B_N$},
\end{array}\right.
\end{equation}
where we consider the following\medskip

\begin{hypothesis}\label{Hgsigmae}
Let $\Omega\subseteq\R^n$ be a connected bounded open set satisfying the uniform $C^{d}$-regularity property with $d\ge2$. We assume the following:
\begin{itemize}
\item $\tilde g\in H^2(\Omega)$ is the initial condition of $V$, and $g\in H^{\frac{3}{2}}(\de\Omega)$ is the Dirichlet-boundary datum prescribed by the Ramo-Shockley theorem (see \cite{Anderlini_2026} for more details). Moreover, since $\Omega$ satisfies the uniform $C^{d}$-regularity property with $d\ge2$, the generalized normal trace operator $\gamma_1$ is well defined (see Lemma \ref{extensiontrace}), and we require the compatibility condition
$$\de_\nu\tilde g|_{B_N}=\gamma_1(\tilde g) |_{B_N}=0;$$
%
\item $\sigma\in L^\infty(\Omega)$ is a non-negative function, denoting the electrical conductivity of the material, which is independent of time;
\item $\e=\e_r\e_0>0$ is a positive constant, denoting the dielectric constant of the material.
\end{itemize}
\end{hypothesis}

\medskip

Defining $a(x):=\frac{\sigma(x)}{\e}$, we can rewrite the equation in \eqref{eq.BVPIC} as
$$\de_t \Delta V(t,x) + \div (a(x) \nabla V(t,x)) = 0,\quad\text{in $[0,T]\,\times \Omega$}.$$

%

The strategy to prove the existence of a solution of \eqref{eq.BVPIC} is to divide the problem in two sub-problems as follows: we will prove that \eqref{eq.BVPIC} admits a unique solution decomposed as
\begin{equation}\label{eq.solutionsplit}
V(t,x) = \omega(x) + u(t,x)
\end{equation}
where $\omega$ is the unique (static) weak solution (in the sense of Proposition \ref{solPBstatic}) of the \textit{elliptic BVP with Neumann-Dirichlet BC}
\begin{equation}\label{eq.BVPomega}
\left\{
\begin{array}{ll}
\dsy -\div \big( A(x) \nabla \omega(x) \big) = 0,&\quad\text{in $\Omega$}, \\
\omega(x) = g(x),&\quad\text{in $B_D$},\\
\de_\nu \omega(x) = 0,&\quad\text{in $B_N$},
\end{array}\right.
\end{equation}
where $A(x):=a(x)+1$, and $u$ is the unique (time-dependent) weak solution (in the sense of Proposition \ref{solPBtime}) of the \textit{parabolic degenerate Cauchy BVP}
\begin{equation}\label{eq.BVPICv}
\left\{
\begin{array}{ll}
\de_t \Delta u(t,x) + \div (a(x) \nabla u(t,x)) = \Delta\omega,&\quad\text{in $[0,T]\,\times \Omega$}, \\
u(t=0,x) = -\tilde \omega(x),&\quad\text{in $\overline{\Omega}$},\\
u(t,x) = 0,&\quad\text{in $[0,T]\,\times B_D$},\\
\de_\nu u(t,x) = 0,&\quad\text{in $[0,T]\,\times B_N$},
\end{array}\right.
\end{equation}
where $\tilde \omega := \omega- \tilde g$. (We stress that the initial condition and boundary conditions in \eqref{eq.BVPICv} are coherent thanks to the hypothesis on $\tilde g$).

\medskip

To state the main result, let us define
$$X:=\left\{u\in H^1(\Omega)\,\,|\,\,R_{B_D}u=0\right\}$$
as the space of functions in $H^1(\Omega)$ with zero-trace on $B_D$ endowed with the norm $\|\nabla\cdot\|_{0}$ (see Section \ref{sec:PB1} for more details), and let $(-\Delta_{ND})^{-1}$ be the solution operator of mixed Neumann-Dirichlet BVP associated to the Laplacian as in Appendix \ref{sec:A1}.

\begin{defi}\label{solutionBVP1}
Assume Hypothesis \ref{Hgsigmae}.
We say that $V\in C^1([0,T];H^1(\Omega))$ is a weak solution of problem \eqref{eq.BVPIC} if there exist $\omega\in H^1(\Omega), u\in C^1([0,T]; H^1(\Omega))$ such that 
$$V(t,x)=\omega(x)+u(t,x)\quad\text{for every $t\in [0,T]$, for a.e.\,$x\in\Omega$,} $$
such that
\begin{itemize}
\item $\omega-\tilde g\in X$ and $\omega$ satisfies
\begin{equation*}
\int_\Omega\,\left(\frac{\sigma(x)}{\e}+1\right) \langle \nabla \omega(x),\nabla\varphi(x) \rangle\,dx=0,\quad\text{for every  $\varphi\in X$}.
\end{equation*}
\item $u(t,\cdot)=(-\Delta_{ND})^{-1} w(t,\cdot)$ for every $t\in [0,T]$ such that $w\in C^1([0,T];X^*)$ is the unique solution of
$$\langle w'(t,\cdot)|\varphi\rangle_{X^*,X} +\int_\Omega\,\frac{\sigma(x)}{\e}\langle\nabla (-\Delta_{ND})^{-1} w(t,\cdot),\nabla\varphi\rangle\,dx=\int_\Omega\langle \nabla\omega,\nabla\varphi\rangle\,dx$$
for every $\varphi\in X$, $t\in[0,T]$, with the initial condition $w(0,\cdot) = \Delta(\omega-\tilde g)$.
\end{itemize}
\end{defi}


\medskip
Our main result of this section is the following

\begin{teo}\label{Solution1}
Assume Hypothesis \ref{Hgsigmae}. Then, there exists a unique weak solution $V\in C^1([0,T];H^1(\Omega))$ of \eqref{eq.BVPIC} in the sense of Definition \ref{solutionBVP1}.
Moreover, we have the estimate
\begin{align*}
\|V\|_{C^1([0,T];H^1(\Omega))}&\leq \|\omega\|_{H^{1}(\Omega)}+c_\Omega\|u\|_{C^1([0,T];X)}\\
&\leq 2\max\{1,c_{\Omega}^2\}\max\{1,T\}\left(1+\|\sigma\|_{L^\infty(\Omega)}/\e\right)^2\|\tilde g\|_{H^1(\Omega)},
\end{align*}
where $c_\Omega>0$ is the constant of equivalence between the $H^1(\Omega)$-norm and $\|\nabla\cdot\|_{L^2(\Omega)}$ in $X$ as in Proposition \ref{prop.equivnormX}.
\end{teo}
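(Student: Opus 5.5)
We construct $\omega$ and $u$ separately, following the splitting \eqref{eq.solutionsplit}.

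\textbf{Static part.} On $X$ (normed by $\|\nabla\cdot\|_0$, equivalent to the $H^1$ norm by Proposition \ref{prop.equivnormX}) the form
$$b(\phi,\varphi)=\int_\Omega A\langle\nabla\phi,\nabla\varphi\rangle\,dx,\qquad A=1+\sigma/\e,$$
is bounded by $(1+\|\sigma\|_\infty/\e)$ and coercive with constant $1$. We look for $\omega=\tilde g+z$ with $z\in X$ solving
$$b(z,\varphi)=-b(\tilde g,\varphi)\qquad\text{for all }\varphi\in X.$$
Lax--Milgram, or Theorem \ref{teo.browder} applied to the linear map $X\to X^*$, gives a unique $z$. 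Moreover
$$\|\nabla z\|_0\le(1+\|\sigma\|_\infty/\e)\|\nabla\tilde g\|_0,$$
so $\|\omega\|_1\le c_\Omega(2+\|\sigma\|_\infty/\e)\|\tilde g\|_1$. Uniqueness holds because the difference of two solutions lies in $X$ and is $b$-orthogonal to itself.

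\textbf{Dynamic part.} Take $H=X^*$ with the dual inner product. By Riesz, $(-\Delta_{ND})^{-1}:X^*\to X$ is an isometric isomorphism, where $-\Delta_{ND}\phi=\langle\nabla\phi,\nabla\cdot\rangle$. So
$$(w_1,w_2)_{X^*}=\int_\Omega\langle\nabla(-\Delta_{ND})^{-1}w_1,\nabla(-\Delta_{ND})^{-1}w_2\rangle\,dx.$$
Define $\mathcal A:X^*\to X^*$ by
$$\langle\mathcal Aw|\varphi\rangle=\int_\Omega a\langle\nabla(-\Delta_{ND})^{-1}w,\nabla\varphi\rangle\,dx,\qquad a=\sigma/\e.$$
This operator is bounded with norm at most $\|a\|_\infty$. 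It is monotone, since
$$(\mathcal Aw,w)_{X^*}=\langle\mathcal Aw|(-\Delta_{ND})^{-1}w\rangle=\int_\Omega a|\nabla(-\Delta_{ND})^{-1}w|^2\ge0.$$
The bilinear form on $X$ given by $\int(1+\lambda a)\langle\nabla\phi,\nabla\varphi\rangle$ is coercive, so $Id+\lambda\mathcal A$ is surjective by Minty--Browder (Theorem \ref{teo.browder}). Hence $\mathcal A$ is maximal monotone, and by Proposition \ref{propbrezis} the operator $B=-\mathcal A$ is Hille--Yosida with $M=1$, $\lambda_0=0$ and $D(B)=X^*$. Since $B$ is bounded, one can in fact simply use $e^{tB}$.

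We apply Theorem \ref{teo.exuniquesolHYthnonhom} with constant forcing $f\equiv-\Delta_{ND}\omega$, meaning $\varphi\mapsto\int\langle\nabla\omega,\nabla\varphi\rangle$, and initial datum $w_0=\Delta(\omega-\tilde g)$, read as $-\Delta_{ND}(\tilde g-\omega)\in X^*$. The compatibility condition $u_1\in\overline{D(B)}=X^*$ holds trivially. This yields a unique $w\in C^1([0,T];X^*)$ with
$$\|w(t)\|_{X^*}\le\|w_0\|+t\|f\|,\qquad \|w'(t)\|_{X^*}\le\|Bw_0+f\|\le\|a\|_\infty\|w_0\|+\|f\|.$$
Setting $u=(-\Delta_{ND})^{-1}w\in C^1([0,T];X)$, the isometry gives $\|u\|_{C^1([0,T];X)}=\|w\|_{C^1([0,T];X^*)}$. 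The norms are $\|f\|=\|\nabla\omega\|_0$ and $\|w_0\|=\|\nabla(\omega-\tilde g)\|_0$, and both are bounded by multiples of $(1+\|\sigma\|_\infty/\e)\|\tilde g\|_1$ from the static estimate.

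\textbf{Conclusion and main obstacle.} The first inequality in the statement is the triangle inequality combined with the norm equivalence on $X$. The second inequality comes from collecting the constants above: one factor $(1+\|\sigma\|_\infty/\e)$ arises from the static estimate, a second from $\|B\|$ in the $u'$ bound, and $\max\{1,T\}$ from the $t\|f\|$ term. Matching the exact constant $2\max\{1,c_\Omega^2\}$ requires some care with where $c_\Omega$ enters, since the $\tilde g$ norm is in $H^1$ while the estimates use gradient norms. Uniqueness of $V$ follows from uniqueness of $\omega$ and of $w$, and the Definition \ref{solutionBVP1} identities hold by construction. The step needing the most attention is verifying that the $X^*$ inner product makes $\mathcal A$ monotone, that is, identifying the duality pairing through $(-\Delta_{ND})^{-1}$ as in Appendix \ref{sec:A1}. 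Everything else is routine once that is in place.
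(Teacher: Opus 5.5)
Your existence and uniqueness argument is correct and follows the paper: Lax--Milgram for $\omega$, maximal monotonicity of $-B$ via Minty--Browder, Theorem~\ref{teo.exuniquesolHYthnonhom} with $M=1$, $\lambda_0=0$, and $u=(-\Delta_{ND})^{-1}w$. Working in $X^*$ with the Riesz-transported inner product only repackages the paper's conjugated operator on $\tilde X$. It also shows $\tilde X=X$ and makes $u\in C^1([0,T];X)$ immediate. The monotonicity step you single out as delicate is routine.

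The gap is the explicit estimate. It is part of the statement, and ``collecting constants'' does not produce it. Put $\kappa:=\|\sigma\|_{L^\infty(\Omega)}/\varepsilon$. Your ingredients are $\|w_0\|_{X^*}\le(1+\kappa)\|\tilde g\|_1$, $\|f\|_{X^*}\le\|\nabla\omega\|_0\le(2+\kappa)\|\tilde g\|_1$ (an inequality, not the equality you wrote), $\|w'\|_{X^*}\le\kappa\|w_0\|_{X^*}+\|f\|_{X^*}$ and $\|\omega\|_1\le c_\Omega(2+\kappa)\|\tilde g\|_1$. Already for $\kappa=0$, $T=1$ they bound the middle term only by $7c_\Omega\|\tilde g\|_1$. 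That exceeds $2\max\{1,c_\Omega^2\}\|\tilde g\|_1$ whenever $c_\Omega<7/2$. The paper's bookkeeping does not settle this either: its bounds for $\omega$ and $u$ add up to a factor $4$, and the last step of its bound on $\|w(t)\|_{X^*}$ does not follow from the preceding line.

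The missing idea is to use the static equation $((1+a)\nabla\omega,\nabla\varphi)_0=0$ on $X$. It shows that $f$ is of size $\kappa$ and that $w'(0)=Bw_0+f$ involves $\kappa\nabla\tilde g$ only, not $\nabla\omega$. Since $(-\Delta_{ND})^{-1}w_0=\tilde g-\omega$,
\[
\langle Bw_0+f|\varphi\rangle_{X^*,X}=-\big(a\nabla(\tilde g-\omega),\nabla\varphi\big)_0+(\nabla\omega,\nabla\varphi)_0=-(a\nabla\tilde g,\nabla\varphi)_0 .
\]
Contractivity then gives $\|u'(t)\|_X=\|w'(t)\|_{X^*}\le\kappa\|\nabla\tilde g\|_0$ and $\|u(t)\|_X\le\|\nabla\tilde\omega\|_0+t\kappa\|\nabla\tilde g\|_0$.

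Testing the static equation with $\tilde\omega$ gives the $A$-orthogonal splitting $\int_\Omega A|\nabla\omega|^2+\int_\Omega A|\nabla\tilde\omega|^2=\int_\Omega A|\nabla\tilde g|^2\le(1+\kappa)\|\nabla\tilde g\|_0^2$. Set $\alpha=\|\tilde g\|_0$, $\beta=\|\nabla\omega\|_0$, $\gamma=\|\nabla\tilde\omega\|_0$, and use $\|\tilde\omega\|_0\le\sqrt{c_\Omega^2-1}\,\gamma$. Cauchy--Schwarz in $\R^2$, together with $1+(c_\Omega+\sqrt{c_\Omega^2-1})^2\le 4c_\Omega^2$, yields
\[
\|\omega\|_1+c_\Omega\gamma\le\sqrt{\alpha^2+\beta^2}+\big(c_\Omega+\sqrt{c_\Omega^2-1}\big)\gamma\le 2c_\Omega\sqrt{\alpha^2+\beta^2+\gamma^2}\le 2c_\Omega\sqrt{1+\kappa}\,\|\tilde g\|_1 .
\]
Therefore $\|\omega\|_1+c_\Omega\|u\|_{C^1([0,T];X)}\le 2c_\Omega\max\{1,T\}\big(\sqrt{1+\kappa}+\kappa\big)\|\tilde g\|_1$. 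This lies below the right-hand side in the statement, so the stated inequality follows.
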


\bigskip
\subsection{Analysis of the stationary part}\label{sec:PB1}

We want to prove the existence of $\omega$ as the unique weak solution of \eqref{eq.BVPomega}. Let us start with some consideration on the suitable space we will use. Since $B_D$ is a submanifold of $\de\Omega$, we have that $(\cdot|_{B_D}) : H^{\frac{1}{2}}(\de\Omega)\hookrightarrow H^{\frac{1}{2}}(B_D)$ is a continuous map, so the trace operator on $B_D$
$$R_{B_D}:=(\cdot|_{B_D})\circ \gamma_0 :H^{1}(\Omega)\to H^{\frac{1}{2}}(B_D)$$
is a bounded linear operator. 
Let us define the space
\begin{equation}\label{defX}
X:=\left\{u\in H^1(\Omega)\,\,|\,\,R_{B_D}u=0\right\}
\end{equation}
and we observe that, by the continuity of $R_{B_D}$, $X$ is a Hilbert space endowed with the inner product $(\cdot,\cdot)_1$.
(We stress that $R_{B_D}$ is well posed on $X$ by definition.)

\begin{prop}\label{prop.equivnormX}
$\|\cdot\|_{H_0^1(\Omega)}:=\|\nabla\cdot\|_{0}$ is an equivalent norm on $X$, i.e.\,there exists a constant $c_\Omega>0$ such that
$$\frac{1}{c_\Omega}\|\varphi\|_1\leq \|\varphi\|_{H_0^1(\Omega)}\leq c_\Omega\|\varphi\|_1,\quad\text{for every $\varphi\in X$}.$$
\end{prop}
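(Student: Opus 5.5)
The upper bound is immediate. Since $\|\varphi\|_1^2=\|\varphi\|_0^2+\|\nabla\varphi\|_0^2$, we have $\|\nabla\varphi\|_0\le\|\varphi\|_1$, so any $c_\Omega\ge1$ works on that side. The real content is a Poincaré-type inequality on $X$,
$$\|\varphi\|_0\le C\|\nabla\varphi\|_0\quad\text{for every }\varphi\in X,$$
which gives $\|\varphi\|_1\le \sqrt{1+C^2}\,\|\nabla\varphi\|_0$. One then takes $c_\Omega:=\max\{1,\sqrt{1+C^2}\}$.

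I will prove the Poincaré inequality by the standard compactness (contradiction) argument. Suppose it fails. Then there are $\varphi_k\in X$ with $\|\varphi_k\|_0=1$ and $\|\nabla\varphi_k\|_0\to0$. This sequence is bounded in $H^1(\Omega)$, and I need three facts.

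\begin{enumerate}[(i)]
\item \emph{Compactness.} $\Omega$ is bounded and has the uniform $C^d$-regularity property, $d\ge2$, so in particular the cone/Lipschitz condition holds. Rellich--Kondrachov then gives a subsequence with $\varphi_k\to\varphi$ in $L^2(\Omega)$ and $\varphi_k\rightharpoonup\varphi$ weakly in $H^1(\Omega)$.
\item \emph{Closedness of $X$.} $X$ is weakly closed, because it is a closed subspace: it is the kernel of the bounded operator $R_{B_D}$. Hence $\varphi\in X$.
\item \emph{Constancy of the limit.} Weak lower semicontinuity gives $\|\nabla\varphi\|_0\le\liminf\|\nabla\varphi_k\|_0=0$, so $\nabla\varphi=0$. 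Since $\Omega$ is connected, $\varphi\equiv c$ is constant. Also $\|\varphi\|_0=1$ by strong $L^2$ convergence, so $c\neq0$.
\end{enumerate}

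To close the argument, note that the trace of the constant $c$ is $\gamma_0 c=c$ on $\partial\Omega$. Then $R_{B_D}\varphi=c$ on $B_D$, which is nonzero as an element of $H^{\frac12}(B_D)$ because $B_D$ is a nonempty relatively open subset of $\partial\Omega$ and so has positive surface measure. This contradicts $\varphi\in X$. Therefore the inequality holds.

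The main obstacle is this final step: one needs the nonemptiness of $B_D$ to yield positive surface measure, so that a nonzero constant has nonzero trace there. The hypothesis that $B_D$ is a non-empty open regular subset of $\partial\Omega$ provides exactly this. A smaller point is that (i) relies on the compact embedding $H^1(\Omega)\hookrightarrow L^2(\Omega)$, which the regularity assumption on $\partial\Omega$ guarantees.
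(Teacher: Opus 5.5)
Your proof is correct and follows the same compactness-by-contradiction argument as the paper: normalize $\|\varphi_k\|_0=1$ with $\|\nabla\varphi_k\|_0\to0$, extract a limit via Rellich--Kondrachov, place it in $X$ through the bounded trace $R_{B_D}$, identify it as a nonzero constant by connectedness, and contradict the vanishing trace on $B_D$. The differences are cosmetic: you invoke weak closedness of $\ker R_{B_D}$ where the paper observes that the convergence is actually strong in $H^1(\Omega)$, and you state explicitly the positive surface measure of $B_D$ that the paper's final step uses implicitly.
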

\begin{proof}
Trivially $\|\varphi\|_{H_0^1(\Omega)}\leq\|\varphi\|_{1}$, for every $\varphi\in X$. To prove the other inequality, we argue by contradiction. 
Suppose that for every $c_k:=1/k>0$, there exist $\varphi_k\in X$ such that $\|\varphi_k\|_0=1$ and
$\|\varphi_k\|_{H_0^1(\Omega)}\leq c_k \|\varphi_k\|_{1}$. Thus,
$$\|\nabla\varphi_k\|^2_{0}=\|\varphi_k\|^2_{H_0^1(\Omega)}\leq \frac{1}{k^2}\|\varphi_k\|_1^2= \frac{1}{k^2}\|\varphi_k\|^2_0+\frac{1}{k^2}\|\nabla\varphi_k\|^2_{0},$$
so that
\begin{equation}\label{eq.stimanormeeq}
\|\nabla\varphi_k\|^2_{0}\leq \frac{1}{k^2-1}.
\end{equation}
Since $\|\varphi_k\|_0=1$, then $\|\varphi_k\|_{1}\leq 2$, showing that $(\varphi_k)_{k\in\N}$ is a bounded sequence in $X$ with respect to $\|\cdot\|_1$. Hence, by \textit{Banach-Alaoglu theorem} and \textit{Rellich-Kondrachov embedding}, there exists $\varphi\in H^1(\Omega)$ such that $\varphi_k\rightharpoonup \varphi$ in $H^1(\Omega)$ and $\varphi_k\to \varphi$ in $L^2(\Omega)$.
By \eqref{eq.stimanormeeq}, we have $\nabla\varphi_k\to 0$ in $L^2(\Omega)$ as $k\to+\infty$, and $\nabla\varphi=0$, showing $\varphi_k\to\varphi$ in $X$. Since $R_{B_D}: H^1(\Omega)\to H^{\frac{1}{2}}(B_D)$ is a bounded operator, then $0=R_{B_D}\varphi_k\to R_{B_D}\varphi$ in $H^{\frac{1}{2}}(B_D)$, so that $R_{B_D}\varphi=0$, implying $\varphi\in X$.

Now, by \textit{Poincaré–Wirtinger inequality} (recall that $\Omega$ is a connected bounded open set), we have
that $\varphi$ is constant on $\overline{\Omega}$. On the other hand, $\varphi|_{B_D}=R_{B_D}\varphi=0$, implying $\varphi=0$ in $\overline{\Omega}$. But, by construction, $\|\varphi_k\|_{0}=1$ for every $k\in\N$, so $\|\varphi\|_{0}=1$, which is a contradiction.
\end{proof}\medskip

From now on, we consider $X$ endowed with the norm $\|\cdot\|_X:=\|\nabla\cdot\|_0$. We consider the continuous dual space $X^*$ of $X$ endowed with the operator norm 
$$\|F\|_{X^*}:=\sup\big\{|\langle F|\varphi\rangle_{X^*,X}|\,\,:\,\,\varphi\in X,\,\|\varphi\|_X\leq 1\big\},$$
where $\langle\cdot|\cdot\rangle_{X^*,X}$ is the duality between $X^*$ and $X$.

\medskip

We are in position to state the definition of weak solution to problem \eqref{eq.BVPomega}.

\begin{defi}\label{weakomega}
Assume Hypothesis \ref{Hgsigmae}. We say that $\omega\in H^1(\Omega)$ is a weak solution of \eqref{eq.BVPomega} if $\omega-\tilde g\in X$ and 
\begin{equation}\label{equazione1}
\int_\Omega\,\left(\frac{\sigma(x)}{\e}+1\right) \langle \nabla \omega(x),\nabla\varphi(x) \rangle\,dx=0,\quad\text{for every  $\varphi\in X$}.
\end{equation}
\end{defi}

We can now prove the following existence and uniqueness result for the stationary component of $V$. The proof relies on a standard Lax--Milgram argument for elliptic problems with mixed boundary conditions, recalled in Appendix \ref{sec:A1}.

\begin{prop}\label{solPBstatic}
Assume Hypothesis \ref{Hgsigmae}.
Then, there exists a unique weak solution $\omega\in H^1(\Omega)$ of \eqref{eq.BVPomega} as in Definition \ref{weakomega}.
Moreover, we have
\begin{equation}\label{omegaH1}
\|\omega\|_{H^1(\Omega)}\leq 2\max\{1,c_\Omega\}\,\left(1+\|\sigma\|_{L^\infty(\Omega)}/\e\right)\|\tilde g\|_{H^{1}(\Omega)},
\end{equation}
\begin{equation}\label{omegatildeH1}
\|\nabla(\omega-\tilde g)\|_{L^2(\Omega)}\leq (1+\|\sigma\|_{L^\infty(\Omega)}/\e)\|\nabla\tilde g\|_{L^2(\Omega)},
\end{equation}
where $c_\Omega>0$ is the constant of equivalence between the $H^1(\Omega)$-norm and $\|\nabla\cdot\|_{L^2(\Omega)}$ in $X$ as in Proposition \ref{prop.equivnormX}.
\end{prop}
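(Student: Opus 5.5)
We reduce the problem to a homogeneous one on $X$ by writing $\omega=\tilde g+v$ with $v\in X$ the new unknown. Then \eqref{equazione1} becomes: find $v\in X$ such that
$$
b(v,\varphi):=\int_\Omega A(x)\langle\nabla v,\nabla\varphi\rangle\,dx=-\int_\Omega A(x)\langle\nabla\tilde g,\nabla\varphi\rangle\,dx=:\langle F|\varphi\rangle_{X^*,X}\quad\text{for every }\varphi\in X,
$$
where $A=1+\sigma/\e$.

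The bilinear form $b$ is bounded on $X$ with constant $1+\|\sigma\|_{L^\infty}/\e$. It is coercive with constant $1$, because $A\ge 1$ (here we use $\sigma\geq0$) and $\|\cdot\|_X=\|\nabla\cdot\|_0$ is a norm on $X$ by Proposition \ref{prop.equivnormX}. The right-hand side $F$ is a bounded functional, and Cauchy--Schwarz gives
$$
\|F\|_{X^*}\le (1+\|\sigma\|_{L^\infty}/\e)\|\nabla\tilde g\|_0 .
$$
Lax--Milgram, as recalled in Appendix \ref{sec:A1}, therefore gives a unique $v\in X$. Setting $\omega=\tilde g+v$ yields existence.

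For uniqueness, suppose $\omega_1,\omega_2$ are two weak solutions. Their difference lies in $X$ and satisfies $b(\omega_1-\omega_2,\varphi)=0$ for all $\varphi\in X$. Testing with $\varphi=\omega_1-\omega_2$ and using coercivity gives $\nabla(\omega_1-\omega_2)=0$, hence $\omega_1=\omega_2$ in $X$.

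For the estimates, test the equation for $v$ with $\varphi=v$:
$$
\|\nabla v\|_0^2\le b(v,v)=\langle F|v\rangle_{X^*,X}\le (1+\|\sigma\|_\infty/\e)\|\nabla\tilde g\|_0\|\nabla v\|_0 .
$$
This gives \eqref{omegatildeH1} directly, since $v=\omega-\tilde g$. For \eqref{omegaH1}, apply the triangle inequality and convert the gradient norm of $v$ into the full $H^1$ norm with Proposition \ref{prop.equivnormX}:
$$
\|\omega\|_1\le\|\tilde g\|_1+\|v\|_1\le \|\tilde g\|_1+c_\Omega(1+\|\sigma\|_\infty/\e)\|\tilde g\|_1 .
$$
The right-hand side is at most $2\max\{1,c_\Omega\}(1+\|\sigma\|_\infty/\e)\|\tilde g\|_1$.

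The only point needing care is the direction of the norm equivalence. The constant must give $\|v\|_1\le c_\Omega\|\nabla v\|_0$, which is indeed the left inequality in Proposition \ref{prop.equivnormX}. No other step is delicate.
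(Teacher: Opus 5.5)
Your proof is correct and follows essentially the same route as the paper. The paper also sets $\tilde\omega=\omega-\tilde g\in X$ and applies the Lax--Milgram result of the Appendix with $A=1+\sigma/\varepsilon\ge 1$ and right-hand side $-\mathrm{div}(A\nabla\tilde g)\in X^*$, bounded by $(1+\|\sigma\|_{L^\infty(\Omega)}/\varepsilon)\|\nabla\tilde g\|_0$. It then obtains the $H^1$ bound from the triangle inequality and the left inequality of Proposition \ref{prop.equivnormX}; your explicit uniqueness argument for $\omega$ simply spells out what the paper leaves to the uniqueness part of Lax--Milgram.
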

\begin{proof}
Let $\tilde \omega:=\omega-\tilde g$. Then, $\tilde \omega$ formally satisfies
\begin{equation}\label{eq.BVPomegatilde}
\left\{
\begin{array}{ll}
\dsy -\div \big( A(x) \nabla \tilde\omega ) = \div(A(x)\nabla\tilde g),&\quad\text{in $\Omega$}, \\
\tilde \omega = 0,&\quad\text{in $B_D$},\\
\de_\nu \tilde \omega = -\de_\nu \tilde g=0,&\quad\text{in $B_N$}.
\end{array}\right.
\end{equation}
By hypothesis on $\tilde g$, the functional $-\div \big( A \nabla \tilde g \big): X\to \R$ defined as
$$\langle-\div \big( A \nabla \tilde g \big)|\varphi\rangle_{X^*,X}:=( A \nabla \tilde g, \nabla\varphi )_0,\quad\text{for every $\varphi\in X$}$$
is well-posed and $-\div \big( A \nabla \tilde g \big)\in X^*$ with the estimate
\begin{equation}\label{stimadivA}
\|-\div \big( A \nabla \tilde g \big)\|_{X^*}\leq \left(\esssup_{\Omega}A\right)\,\|\nabla \tilde g\|_{0}\leq \left(1+\|a\|_{L^\infty(\Omega)}\right)\,\|\tilde g\|_{1}.
\end{equation}
Since $A$ is uniformly bounded from below $$\essinf_{\Omega}A=\essinf_{\Omega} a +1\geq 1,$$
by the results in Appendix \ref{sec:A1}, there exists a unique weak solution $\tilde\omega\in X$ of problem \eqref{eq.BVPomegatilde}, i.e.\,
\begin{equation}\label{eq.BVPND2weakk}
\dsy ( A \nabla \tilde\omega,\nabla\varphi )_0 =- (A\nabla\tilde g,\nabla\varphi)_0,\quad\text{for every  $\varphi\in X$},
\end{equation}
or, equivalently,
$$(A\nabla\omega,\nabla\varphi)_0=(A\nabla(\tilde\omega+\tilde g),\nabla\varphi)_0=0,\quad\text{for every  $\varphi\in X$}.$$
Moreover, combining estimate \eqref{eq.stimacontF} in Appendix \ref{sec:A1} and estimate \eqref{stimadivA}, we have
\begin{align*}
\|\omega\|_{1}&\leq c_\Omega\|\tilde\omega\|_{X}+\|\tilde g\|_{1}\leq c_\Omega\|-\div \big( A \nabla \tilde g \big)\|_{X^*}+\|\tilde g\|_{1}\\
&\leq 2\max\{1,c_\Omega\}\,\left(1+\|a\|_{L^\infty(\Omega)} \right)\|\tilde g\|_{1},
\end{align*}
completing the proof since \eqref{omegatildeH1} descends from estimate \eqref{stimadivA}.
\end{proof}

\begin{oss}\label{tildeomegaXtilde}
In the proof of Proposition \ref{solPBstatic}, we proved that $\tilde \omega$ satisfies \eqref{eq.BVPomegatilde} weakly, or equivalently \eqref{eq.BVPND2weakk}. In particular, by the boundary conditions in \eqref{eq.BVPomegatilde}, the functional
$$\langle -\Delta\tilde \omega|\varphi\rangle_{X^*,X}:=(\nabla\tilde \omega, \nabla\varphi)_0\quad\forall\,\varphi\in X$$
is well-posed and $-\Delta\tilde \omega\in X^*$. Setting $F:=-\Delta\tilde \omega$, then $\tilde\omega$ satisfies 
\begin{equation*}
\left\{
\begin{array}{ll}
\dsy -\Delta \tilde\omega = F,&\quad\text{in $\Omega$}, \\
\tilde \omega = 0,&\quad\text{in $B_D$},\\
\de_\nu \tilde \omega =0,&\quad\text{in $B_N$},
\end{array}\right.
\end{equation*}
weakly, or equivalently
$$(\nabla\tilde \omega, \nabla\varphi)_0=\langle F|\varphi\rangle_{X^*,X}\quad\forall\,\varphi\in X.$$
Using the same notation of Appendix \ref{sec:A1}, we have $\tilde\omega\in \tilde X$.
\end{oss}

\bigskip
\subsection{Analysis of the time-dependent part}\label{sec:PB2}

To solve \eqref{eq.BVPICv}, we reformulate the problem as an abstract Cauchy problem of the form \eqref{CauchyPbtime} in Section~\ref{HYop}, so that we may apply Theorem~\ref{teo.exuniquesolHYthnonhom}.
The key idea is to invert the Laplacian $-\Delta$ with respect to the mixed Neumann--Dirichlet boundary conditions satisfied by $u$.  
More precisely, since $u$ satisfies homogeneous Neumann--Dirichlet conditions in \eqref{eq.BVPICv}, we work with the solution operator associated with the problem introduced in Appendix \ref{sec:A1}.  
Accordingly, we regard the time-dependent component as
\[
u(t,\cdot)\in \tilde X := R\,\left((-\Delta_{ND})^{-1}\right),
\qquad t\in[0,T],
\]
and rewrite \eqref{eq.BVPICv} as an evolution equation for 
\[
w(t,\cdot):=-\Delta u(t,\cdot)\in X^*,
\]
which fits into the Hille--Yosida framework.

Let us start with some considerations. At first, we observe that the functionals
\begin{align*}
\langle-\Delta u(t,\cdot)|\varphi\rangle_{X^*,X}&:=(\nabla u(t,\cdot),\nabla\varphi)_0\quad&\forall\,\varphi\in X,\\
\langle-\div(a \nabla u(t,\cdot))|\varphi\rangle_{X^*,X}&:=(a\nabla u(t,\cdot),\nabla\varphi)_0\quad&\forall\,\varphi\in X,
\end{align*}
are well-posed and are in $X^*$. 
Moreover, considering $\omega,\tilde\omega$ as in Section \ref{sec:PB1}, we can define
\begin{align*}
\langle -\Delta\tilde \omega|\varphi\rangle_{X^*,X}&:=(\nabla\tilde \omega, \nabla\varphi)_0\quad&\forall\,\varphi\in X,\\
\langle -\Delta\omega|\varphi\rangle_{X^*,X}&:=(\nabla\omega, \nabla\varphi)_0\quad&\forall\,\varphi\in X,\\
\langle-\div(a \nabla \tilde \omega)|\varphi\rangle_{X^*,X}&:=(a\nabla \tilde \omega,\nabla\varphi)_0\quad&\forall\,\varphi\in X,
\end{align*}
which are well-posed in $X^*$, and coherent with respect to the boundary conditions on $\omega,\tilde\omega$ in \eqref{eq.BVPomega} and \eqref{eq.BVPomegatilde} (recall Remark \ref{tildeomegaXtilde} and that $a$ is a scalar function), with the estimate
\begin{align*}
\|-\Delta\tilde\omega\|_{X^*}&\leq \left(1+\|a\|_{L^\infty(\Omega)}\right)\|\nabla\tilde g\|_{0},\\
\|-\Delta\omega\|_{X^*}&\leq 2\max\{1,c_\Omega\}\left(1+\|a\|_{L^\infty(\Omega)}\right)\|\tilde g\|_{1},\\
\|-\div(a \nabla \tilde \omega)\|_{X^*}&\leq \|a\|_{L^\infty(\Omega)}\left(1+\|a\|_{L^\infty(\Omega)}\right)\|\nabla\tilde g\|_{0}.
\end{align*}
Using the initial condition in \eqref{eq.BVPICv}, we have
$$w(t=0,\cdot)=-\Delta u(t=0,\cdot)=\Delta\tilde\omega\in X^*.$$

Hence, we can rewrite \eqref{eq.BVPICv} as
\begin{equation}\label{eq.BVPICweak}
\left\{
\begin{array}{ll}
\de_t w(t,\cdot) = \div \big(a \nabla (-\Delta_{ND})^{-1} w(t,\cdot)\big) - \Delta\omega,&\quad\text{in $X^*$, $t\in [0,T]$}, \\
w(0,\cdot) = \Delta\tilde\omega\in X^*.&
\end{array}\right.
\end{equation}
We stress that the Neumann-Dirichlet BC on $u$ are encoded in the request of $u(t,\cdot):=(-\Delta_{ND})^{-1} w(t,\cdot)$ for every $t\in [0,T]$.

\medskip

\begin{prop}\label{exunw}
Assume Hypothesis \ref{Hgsigmae}. There exists a unique strict $C^0$ (integral and strict $L^p$ for every $1\leq p < \infty$) solution $w$ of \eqref{eq.BVPICweak}. In particular, $w(t,\cdot)\in C^1([0,T];X^*)$ such that $w$ satisfies the equation in \eqref{eq.BVPICweak} for every $t\in [0,T]$, and 
\begin{align*}
\|w(t,\cdot)\|_{X^*}&\leq 2\max\{1,t\}\max\{1,c_\Omega\}\left(1+\|\sigma\|_{L^\infty(\Omega)}/\e\right)\|\tilde g\|_{H^1(\Omega)}, \\
\|w'(t,\cdot)\|_{X^*}&\leq 2\max\{1,c_\Omega\}\max\{1,\|\sigma\|_{L^\infty(\Omega)}/\e\}\left(1+\|\sigma\|_{L^\infty(\Omega)}/\e\right)\|\tilde g\|_{H^1(\Omega)},
\end{align*}
for every $t\in [0,T]$, where $c_\Omega>0$ is the constant of equivalence between the $H^1(\Omega)$-norm and $\|\nabla\cdot\|_{L^2(\Omega)}$ in $X$ as in Proposition \ref{prop.equivnormX}.

In particular,
$$\|w\|_{C^1([0,T];X^*)}\leq 2\max\{1,c_\Omega\}\max\{1,T\}\left(1+\|\sigma\|_{L^\infty(\Omega)}/\e\right)^2\|\tilde g\|_{H^1(\Omega)}.$$
\end{prop}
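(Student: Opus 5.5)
The plan is to apply Theorem~\ref{teo.exuniquesolHYthnonhom} in the Banach space $X^*$ with operator
$$B w := \div\big(a\nabla(-\Delta_{ND})^{-1}w\big),\qquad D(B)=X^*,$$
constant forcing $f(t)\equiv -\Delta\omega\in X^*$, and initial datum $u_0=w_0:=\Delta\tilde\omega\in X^*$. The operator $B$ is everywhere defined. It is also bounded: by Appendix~\ref{sec:A1}, $(-\Delta_{ND})^{-1}:X^*\to X$ is an isometry for $\|\cdot\|_X=\|\nabla\cdot\|_0$, since it is the Riesz map for the inner product $(\nabla\cdot,\nabla\cdot)_0$. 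Hence
$$\|Bw\|_{X^*}\le \|a\|_{L^\infty(\Omega)}\|w\|_{X^*}.$$
Consequently $\overline{D(B)}=X^*$, so the compatibility condition $u_1=Bw_0+f(0)\in\overline{D(B)}$ holds automatically. Moreover $f\in W^{1,1}(0,T;X^*)$ with $f'\equiv0$.

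The key step is to show that $B$ is Hille--Yosida with $M=1$ and $\lambda_0=0$, i.e.\ that $-B$ is maximal monotone on the Hilbert space $X^*$. For the inner product on $X^*$ I take the one transported by the Riesz isometry, $(F,G)_{X^*}=(\nabla(-\Delta_{ND})^{-1}F,\nabla(-\Delta_{ND})^{-1}G)_0$. Then
$$(-Bw,w)_{X^*}=\langle -Bw\,|\,(-\Delta_{ND})^{-1}w\rangle_{X^*,X}=\int_\Omega a\,|\nabla(-\Delta_{ND})^{-1}w|^2\,dx\ge0,$$
because $a=\sigma/\e\ge0$.

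For maximality I need $Id-\lambda B$ onto for $\lambda>0$. Writing $v=(-\Delta_{ND})^{-1}w$, this amounts to solving $-\div((1+\lambda a)\nabla v)=F$ in $X^*$. That problem is uniformly elliptic with $1+\lambda a\ge1$, so the Appendix / Lax--Milgram gives a solution; Minty--Browder (Theorem~\ref{teo.browder}) would also work. Proposition~\ref{propbrezis}(iii) then gives $\|(Id-\lambda B)^{-1}\|\le1$, hence $\|\lambda^m(\lambda-B)^{-m}\|\le1$. Since $B$ is bounded, one could alternatively just use $\lambda_0=\|a\|_\infty$, but the monotone version gives the sharper constants.

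Theorem~\ref{teo.exuniquesolHYthnonhom} with $M=1$ and $\lambda_0=0$ then yields existence and uniqueness of a strict $C^0$ solution $w\in C^1([0,T];X^*)$, together with
$$\|w(t)\|_{X^*}\le\|\Delta\tilde\omega\|_{X^*}+t\|\Delta\omega\|_{X^*},\qquad \|w'(t)\|_{X^*}\le\|B\Delta\tilde\omega-\Delta\omega\|_{X^*}.$$

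The final step is bookkeeping with the bounds listed before \eqref{eq.BVPICweak}:
\begin{align*}
\|-\Delta\tilde\omega\|_{X^*}&\le(1+\|a\|_{L^\infty(\Omega)})\|\nabla\tilde g\|_0,\\
\|-\Delta\omega\|_{X^*}&\le2\max\{1,c_\Omega\}(1+\|a\|_{L^\infty(\Omega)})\|\tilde g\|_1,\\
\|B\Delta\tilde\omega\|_{X^*}&\le\|a\|_{L^\infty(\Omega)}(1+\|a\|_{L^\infty(\Omega)})\|\nabla\tilde g\|_0.
\end{align*}
For $\|w'\|$ I bound $B\Delta\tilde\omega-\Delta\omega$. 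It is convenient to use the equation for $\omega$, $(\nabla\omega,\nabla\varphi)_0=-(a\nabla\omega,\nabla\varphi)_0$, which rewrites $-\Delta\omega=\div(a\nabla\omega)$. The two terms then combine into $\div(a\nabla\tilde g)$ up to sign, or can at least be estimated separately by $\max\{1,\|a\|_\infty\}$ times the stated factor. Summing gives the $C^1$ bound.

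The main obstacle I expect is matching the precise constants, in particular the factor $2$ and $\max\{1,\|a\|_\infty\}$ in the $w'$ bound. If the naive triangle inequality overshoots by a factor, I will use the cancellation just described. The structural steps, monotonicity and the elliptic solvability, are standard.
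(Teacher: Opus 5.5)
Your proof is correct. Its skeleton is the paper's: Theorem~\ref{teo.exuniquesolHYthnonhom} in $X^*$ with $f\equiv-\Delta\omega$, $w_0=\Delta\tilde\omega$, and $B$ Hille--Yosida with $M=1$, $\lambda_0=0$ via maximal monotonicity of $-B$. Two steps are executed differently (write $\|a\|$ for $\|a\|_{L^\infty(\Omega)}$).

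\textbf{Hille--Yosida step.} The paper conjugates $-B$ to $A=(-\Delta_{ND})^{-1}(-\div(a\nabla\cdot))$ on $\tilde X$. It gets surjectivity of $Id+A$ from Minty--Browder and transfers back using $\|-\Delta|_{\tilde X}\|\le1$ and $\|(-\Delta_{ND})^{-1}\|\le1$. You instead observe that $(-\Delta_{ND})^{-1}$ is the inverse Riesz map, hence an isometry of $X^*$ onto $X$ (so in fact $\tilde X=X$). You then give $X^*$ the transported inner product and obtain the range condition from Lax--Milgram for $-\div((1+\lambda a)\nabla\cdot)$. This is the same operator seen through a unitary map, but your version is more elementary: for this linear coercive problem Minty--Browder is just Lax--Milgram. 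Moreover, the Appendix estimate with $\lambda_{\min}\ge1$ gives $\|(Id-\lambda B)^{-1}\|\le1$ directly, without Proposition~\ref{propbrezis}.

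\textbf{Estimate for $w'$.} Your identity $u_1=B\Delta\tilde\omega-\Delta\omega=\div(a\nabla\tilde g)$ follows from $-\Delta\omega=\div(a\nabla\omega)$ in $X^*$ and $\omega-\tilde\omega=\tilde g$. It yields $\|w'(t)\|_{X^*}\le\|a\|\,\|\nabla\tilde g\|_0$. The paper instead uses the triangle-inequality bound $\|a\|(1+\|a\|)\|\nabla\tilde g\|_0+2\max\{1,c_\Omega\}(1+\|a\|)\|\tilde g\|_1$. For $0<\|a\|\le1$ and $\nabla\tilde g\neq0$ that bound is strictly larger than the stated right-hand side, so your cancellation is what actually delivers the claimed $w'$ estimate.

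\textbf{Estimate for $w(t)$: a correction.} This bound is not pure bookkeeping with the listed estimates. Inserting them into $\|\Delta\tilde\omega\|_{X^*}+t\|\Delta\omega\|_{X^*}$ gives $(1+\|a\|)\|\nabla\tilde g\|_0+2t\max\{1,c_\Omega\}(1+\|a\|)\|\tilde g\|_1$. For $t\ge1$ and $\nabla\tilde g\neq0$ this is strictly larger than $2\max\{1,t\}\max\{1,c_\Omega\}(1+\|a\|)\|\tilde g\|_1$, so the claim does not follow. The paper's proof writes this step as an equality and has the same looseness.

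The fix is one line. Test the equation for $\omega$ with $\omega-\tilde g\in X$ and apply Cauchy--Schwarz in the weighted inner product; this gives $\int_\Omega A|\nabla\omega|^2\le\int_\Omega A|\nabla\tilde g|^2$. Since $A\ge1$, it follows that $\|\Delta\omega\|_{X^*}\le\|\nabla\omega\|_0\le\sqrt{1+\|a\|}\,\|\nabla\tilde g\|_0$. Hence $\|w(t)\|_{X^*}\le(1+t)(1+\|a\|)\|\nabla\tilde g\|_0$, which lies below the stated bound. Combined with your estimate $\|w'(t)\|_{X^*}\le\|a\|\,\|\nabla\tilde g\|_0$, this also yields the final $C^1([0,T];X^*)$ estimate.
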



\medskip
To prove Proposition \ref{exunw}, we need some preliminary results. With the same notation of Definition \ref{defi.HYoperatorsdefi} and Theorem \ref{teo.exuniquesolHYthnonhom}, we set
$$B:=\div \big(a \nabla (-\Delta_{ND})^{-1} \cdot\big):D(B):=X^*\to X^*$$
which is well-posed by previous computations (recall that $\tilde X\subseteq X$ with $\|\cdot\|_{\tilde X}=\|\cdot\|_X$). Moreover, for every $\zeta\in X^*, \varphi\in X$, we have
\begin{align*}
|\langle B\zeta|\varphi\rangle_{X^*,X}|&=|(a\nabla(-\Delta_{ND})^{-1}\zeta,\nabla\varphi)_0|\\
&\leq \|a\|_{L^\infty(\Omega)}\|\nabla(-\Delta_{ND})^{-1}\zeta\|_0\|\nabla\varphi\|_0\\
&= \|a\|_{L^\infty(\Omega)}\|(-\Delta_{ND})^{-1}\zeta\|_{\tilde X}\|\varphi\|_X\\
&\leq \|a\|_{L^\infty(\Omega)}\|\zeta\|_{X^*}\|\varphi\|_X,
\end{align*}
where in the last inequality we used the continuity bound for $(-\Delta_{ND})^{-1}$ as $\|(-\Delta_{ND})^{-1}\|_{\mathcal{L}(X^*,\tilde X)}\leq 1$ proved in Appendix \ref{sec:A1}. Thus, we proved $B\in \mathcal{L}(X^*)$ with the estimate
$$\|B\|_{\mathcal{L}(X^*)}\leq \|\sigma\|_{L^\infty(\Omega)}/\e.$$

\medskip

We want to prove that $B$ is a Hille-Yosida operator as in Definition \ref{defi.HYoperatorsdefi}. For this aim, we apply results of functional analysis about monotone operators as in Section \ref{monotoneOp}.



\begin{prop}\label{BHYoperator}
$B:X^*\to X^*$ is a Hille-Yosida operator as in Definition \ref{defi.HYoperatorsdefi}.
\end{prop}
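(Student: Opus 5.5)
The plan is to show that $-B$ is maximal monotone on $X^*$ with respect to a suitable Hilbert structure. Proposition \ref{propbrezis} then gives $\|(\lambda-B)^{-1}\|_{\mathcal L(X^*)}\leq 1/\lambda$ for every $\lambda>0$. Iterating, $\|\lambda^m(\lambda-B)^{-m}\|\leq 1$, so $B$ is Hille--Yosida in the sense of Definition \ref{defi.HYoperatorsdefi} with $M=1$ and $\lambda_0=0$. As a safety net, note that $B\in\mathcal L(X^*)$ with $\|B\|\leq \|\sigma\|_{L^\infty(\Omega)}/\e$. For $\lambda>\|B\|$ the Neumann series already gives $\|(\lambda-B)^{-m}\|\leq(\lambda-\|B\|)^{-m}$, hence the statement with $M=1$ and $\lambda_0=\|B\|$. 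The monotone route is preferable because it gives $\lambda_0=0$ and hence the $e^{\lambda_0 t}$-free estimates used in Proposition \ref{exunw}.

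\textbf{Step 1 (Hilbert structure on $X^*$).} Write $G:=(-\Delta_{ND})^{-1}$. By Appendix \ref{sec:A1}, $G$ is the Riesz isomorphism $X^*\to\tilde X\subseteq X$ for the inner product $(\nabla\cdot,\nabla\cdot)_0$, that is,
$$(\nabla G\zeta,\nabla\varphi)_0=\langle\zeta|\varphi\rangle_{X^*,X}\quad\forall\,\varphi\in X .$$
Accordingly, equip $X^*$ with
$$(\zeta,\eta)_{X^*}:=(\nabla G\zeta,\nabla G\eta)_0=\langle\zeta|G\eta\rangle_{X^*,X}.$$
I will check that this inner product induces exactly the operator norm $\|\cdot\|_{X^*}$. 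The bound $\|G\|\le 1$ gives one inequality. Testing with $\varphi=G\zeta/\|G\zeta\|_X$ gives the reverse one. Hence $G$ is an isometry and all resolvent bounds are with respect to the original norm.

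\textbf{Step 2 (monotonicity of $-B$).} For $\zeta\in X^*$, by definition of $B$,
$$(-B\zeta,\zeta)_{X^*}=\langle -B\zeta|G\zeta\rangle_{X^*,X}=(a\nabla G\zeta,\nabla G\zeta)_0\geq 0,$$
since $a=\sigma/\e\geq0$ almost everywhere.

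\textbf{Step 3 (maximality).} I need $R(Id-B)=X^*$. Since $D(B)=X^*$, I apply Theorem \ref{teo.browder} to $T:=Id-B$ on the Hilbert space $X^*$, identified with its dual via $(\cdot,\cdot)_{X^*}$; this space is separable and reflexive because $X$ is. The map $T$ is continuous because $B$ is bounded. It is monotone by Step 2. It is coercive because $(T\zeta,\zeta)_{X^*}\geq\|\zeta\|_{X^*}^2$. Minty--Browder therefore gives surjectivity. Equivalently, Lax--Milgram applies to the coercive bilinear form $((Id-B)\zeta,\eta)_{X^*}$.

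\textbf{Step 4 (conclusion).} Proposition \ref{propbrezis}(3) applied to $A=-B$ gives, for every $\mu>0$, that $Id-\mu B$ is bijective with $\|(Id-\mu B)^{-1}\|\leq1$. Setting $\mu=1/\lambda$ yields the bound $\|(\lambda-B)^{-1}\|\leq 1/\lambda$, and powers follow by submultiplicativity.

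The only delicate point is Step 1: the identity $(\zeta,\eta)_{X^*}=\langle\zeta|G\eta\rangle$ and the fact that the induced norm coincides with the operator norm both rely on $G$ being exactly the Riesz map with $\|G\zeta\|_{\tilde X}=\|\zeta\|_{X^*}$. If Appendix \ref{sec:A1} only gives $\|G\|\le 1$, I will recover equality by the testing argument above, using the weak formulation of the Neumann--Dirichlet problem.
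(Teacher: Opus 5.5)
Your argument is essentially the paper's. The paper conjugates $-B$ by $(-\Delta_{ND})^{-1}$ to $A=(-\Delta_{ND})^{-1}(-\div(a\nabla\cdot))$ on $\tilde X$, proves maximal monotonicity there with the same computation and Minty--Browder, and transfers the contraction bound back; transporting the inner product to $X^*$, as you do, is equivalent, and your Neumann-series remark alone already proves the bare statement with $\lambda_0=\|B\|$.

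One slip in Step 1: testing with $\varphi=G\zeta/\|G\zeta\|_X$ yields $\|G\zeta\|_X\le\|\zeta\|_{X^*}$, which is the same inequality as $\|G\|\le1$, not the reverse. The reverse bound $\|\zeta\|_{X^*}\le\|G\zeta\|_X$ comes from Cauchy--Schwarz applied to $\langle\zeta|\varphi\rangle_{X^*,X}=(\nabla G\zeta,\nabla\varphi)_0$, which is exactly the paper's estimate $\|-\Delta|_{\tilde X}\|_{\mathcal{L}(\tilde X,X^*)}\le1$.
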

\begin{proof}
At first, we prove that $(\lambda-B)^{-1}$ exists and it is a bounded operator on $X^*$, for every $\lambda>\lambda_0:=0$. For this aim, we define 
$$A:=(-\Delta_{ND})^{-1}(-B)(-\Delta|_{\tilde X})=(-\Delta_{ND})^{-1}(-\div(a\nabla\cdot)):\tilde X\to \tilde X$$
and we prove that $A$ is a maximal monotone operator on $\tilde X$, so, by Proposition \ref{propbrezis}.3, we obtain $(Id_{\tilde X}+\sigma A)^{-1}$ is a bounded operator, for every $\sigma>0$.

First of all, $A$ is a monotone operator. Recall that, if $\eta,\phi\in \tilde X$, then $-\Delta\eta\in X^*$ and $\langle -\Delta\eta|\phi\rangle_{X^*,X}=(\nabla\eta,\nabla\phi)_0=(\eta,\phi)_{\tilde X}$. Thus,
\begin{align*}
( A\phi,\phi)_{\tilde X}&=( (-\Delta_{ND})^{-1}(-\div(a\nabla\phi)),\phi)_{\tilde X}\\
&=( \nabla(-\Delta_{ND})^{-1}(-\div(a\nabla\phi)),\nabla\phi)_{0}\\
&=\langle-\Delta(-\Delta_{ND})^{-1}(-\div(a\nabla\phi))|\phi\rangle_{X^*,X}\\
&=\langle-\div(a\nabla\phi)|\phi\rangle_{X^*,X}\\
&=(a\nabla\phi,\nabla\phi)_0\geq \left(\essinf_{\Omega}\,a\right)\,\|\nabla\phi\|^2_0\geq 0.
\end{align*}

$A$ is a maximal monotone operator, i.e.\,for every $f\in \tilde X$, there exists $\phi\in \tilde X$ such that $\phi+A\phi=f$, i.e.\,the map
$$Id_{\tilde X}+A:\tilde X\to \tilde X$$
is surjective. Let $i:\tilde X\to (\tilde X)^*$ be the identification (isometric isomorphism) of Hilbert spaces with their duals, where $(\tilde X)^*$ is endowed with the operator norm. Then, we want to apply the \textit{Minty-Browder theorem} to the map
$$i\circ(Id_{\tilde X}+A):\tilde X\to (\tilde X)^*.$$
Trivially, $i(Id_{\tilde X}+A)$ is a continuous map. 
Moreover, it is monotone in the sense of Theorem \ref{teo.browder} since $A$ is monotone by previous computations. Indeed, for every $\phi\in \tilde X$,
\begin{align*}
\langle i(Id_{\tilde X}+A)\phi|\phi\rangle_{(\tilde X)^*, \tilde X}=((Id_{\tilde X}+A)\phi,\phi)_{\tilde X}=\|\phi\|^2_{\tilde X}+(A\phi,\phi)_{\tilde X}\geq 0.
\end{align*}
Eventually, $i(Id_{\tilde X}+A)$ is coercive in the sense of Theorem \ref{teo.browder} since, for every $\phi\in \tilde X$, we have
\begin{align*}
\frac{\langle i(Id_{\tilde X}+A)\phi|\phi\rangle_{(\tilde X)^*, \tilde X}}{\|\phi\|_{\tilde X}}&=\frac{\|\phi\|^2_{\tilde X}+(A\phi,\phi)_{\tilde X}}{\|\phi\|_{\tilde X}}\geq\|\phi\|_{\tilde X}\xrightarrow{\|\phi\|_{\tilde X}\to+\infty} +\infty.
\end{align*}
Thus, by Theorem \ref{teo.browder}, the map $i(Id_{\tilde X}+A):\tilde X\to (\tilde X)^*$ is surjective, implying $(Id_{\tilde X}+A):\tilde X\to \tilde X$ is surjective, and showing that $A$ is a maximal monotone operator on $\tilde X$.

By Proposition \ref{propbrezis}, for every $\sigma>0$, $(Id_{\tilde X}+\sigma A)^{-1}$ is a bounded operator such that
$$\|(Id_{\tilde X}+\sigma A)^{-1}\|_{\mathcal{L}(\tilde X)}\leq 1.$$
Thus, $Id_{X^*}+\sigma (-B)=(-\Delta)(Id_{\tilde X}+\sigma A)(-\Delta_{ND})^{-1}$ is bijective on $X^*$. Using the continuity estimates for $-\Delta$ and $(-\Delta_{ND})^{-1}$ in Appendix \ref{sec:A1}, we have
\begin{align*}
\|(Id_{X^*}+\sigma& (-B))^{-1}\|_{\mathcal{L}(X^*)}\\
&\leq\|-\Delta\|_{\mathcal{L}(\tilde X,X^*)}\|(-\Delta_{ND})^{-1}\|_{\mathcal{L}(X^*,\tilde X)}\|(Id_{\tilde X}+\sigma A)^{-1}\|_{\mathcal{L}(\tilde X)}\\
&\leq \|-\Delta\|_{\mathcal{L}(\tilde X,X^*)}\|(-\Delta_{ND})^{-1}\|_{\mathcal{L}(X^*,\tilde X)}\leq 1.
\end{align*}
Now, if we define $\lambda:=\frac{1}{\sigma}$, then 
\begin{align*}
(Id_{X^*}+\sigma (-B))^{-1}=\sigma^{-1}(\sigma^{-1} Id_{X^*}+(-B))^{-1}=\lambda (\lambda Id_{X^*}+(-B))^{-1},
\end{align*}
showing
$$\|(\lambda Id_{X^*}+ (-B))^{-1}\|_{\mathcal{L}(X^*)}\leq \frac{1}{\lambda}\quad\text{for every $\lambda>0$}.$$
Moreover, $B$ satisfies the second hypothesis of Definition \ref{defi.HYoperatorsdefi} with $\lambda_0:=0, M:=1$. Indeed, for every $m\in\N, \lambda>0$,
$$\|(\lambda-\lambda_0)^m(\lambda Id_{X^*}+ (-B))^{-m}\|_{\mathcal{L}(X^*)}\leq (\lambda-0)^m\cdot \left(\frac{1}{\lambda}\right)^m=1.$$
Thus, $B:X^*\to X^*$ is a Hille-Yosida operator in the sense of Definition \ref{defi.HYoperatorsdefi}.
\end{proof}

\medskip

\begin{proof}[Proof of Proposition \ref{exunw}]
By previous computations, the inhomogeneous term $f:=-\Delta\omega$ belongs to $X^*$, and, since $f$ does not depend on $t$, $f\in W^{1,1}(0,T;X^*)$. Moreover, $w_0=\Delta\tilde\omega\in X^*$ and, using Remark \ref{tildeomegaXtilde}, we have
\begin{align*}
w_1&=Bw_0+f(0)=\div(a\nabla(-\Delta_{ND})^{-1}\Delta\tilde\omega)-\Delta\omega=-\div(a\nabla\tilde\omega)-\Delta\omega\in X^*.
\end{align*}
Since $B$ is a Hille-Yosida operator by Proposition \ref{BHYoperator}, we can apply Theorem \ref{teo.exuniquesolHYthnonhom} to \eqref{eq.BVPICweak} in $X^*$, obtaining the existence and uniqueness of $w$ with the regularity as in the statement. Moreover, the estimates \eqref{eq.estimateCauchytime} hold as
\begin{gather}\label{eq.estimatewwprimo}
\begin{split}
\|w(t,\cdot)\|_{X^*}&\leq 1 e^0\left(\|w(0,\cdot)\|_{X^*}+\int_0^t e^0\|f\|_{X^*}\,\d s\right)\\
&=\|\Delta\tilde\omega\|_{X^*}+t\|-\Delta\omega\|_{X^*}\\
&=2\max\{1,t\}\max\{1,c_\Omega\}\left(1+\|a\|_{L^\infty(\Omega)}\right)\|\tilde g\|_{1},\\
\|w'(t,\cdot)\|_{X^*}&\leq \|w_1\|_{X^*}\\
&\leq\|-\div(a\nabla\tilde\omega))\|_{X^*}+\|-\Delta\omega\|_{X^*}\\
&\leq 2\max\{1,c_\Omega\}\max\{1,\|a\|_{L^\infty(\Omega)}\}\left(1+\|a\|_{L^\infty(\Omega)}\right)\|\tilde g\|_{1},
\end{split}
\end{gather}
for every $t\in [0,T]$, completing the proof.
\end{proof}

\medskip

We are now in position to state the following definition of weak solutions of \eqref{eq.BVPICv}.
\begin{defi}\label{weakudefi}
Assume Hypothesis \ref{Hgsigmae}. We say that $u\in C^1([0,T];\tilde X)$ is a weak solution of \eqref{eq.BVPICv} if there exists $w\in C^1([0,T];X^*)$ such that $u(t,\cdot)=(-\Delta_{ND})^{-1} w(t,\cdot)$ for every $t\in [0,T]$ and $w$ is the unique solution of
$$\langle w'(t,\cdot)|\varphi\rangle_{X^*,X} +\int_\Omega\,\frac{\sigma(x)}{\e}\langle\nabla (-\Delta_{ND})^{-1} w(t,\cdot),\nabla\varphi\rangle\,dx=\int_\Omega\langle \nabla\omega,\nabla\varphi\rangle\,dx$$
for every $\varphi\in X$, $t\in[0,T]$, with the initial condition $w(0,\cdot) = \Delta\tilde\omega$, where $\omega$ is the unique weak solution of the problem \eqref{eq.BVPomega}.
\end{defi}

We can prove the following result about the existence and uniqueness of the dynamic part of the weighting potential.

\begin{prop}\label{solPBtime}
Assume Hypothesis \ref{Hgsigmae}. Then, there exists a unique weak solution $u\in C^1([0,T];\tilde X)$ of \eqref{eq.BVPICv} in the sense of Definition \ref{weakudefi}. Moreover, we have the estimates
\begin{align*}
\|u(t,\cdot)\|_{X}&\leq 2\max\{1,t\}\max\{1,c_\Omega\}\left(1+\|\sigma\|_{L^\infty(\Omega)}/\e\right)\|\tilde g\|_{H^1(\Omega)}, \\
\|u'(t,\cdot)\|_{X}&\leq 2\max\{1,c_\Omega\}\max\{1,\|\sigma\|_{L^\infty(\Omega)}/\e\}\left(1+\|\sigma\|_{L^\infty(\Omega)}/\e\right)\|\tilde g\|_{H^1(\Omega)},
\end{align*}
for every $t\in [0,T]$. In particular,
$$\|u\|_{C^1([0,T];\tilde X)}\leq 2\max\{1,c_\Omega\}\max\{1,T\}\left(1+\|\sigma\|_{L^\infty(\Omega)}/\e\right)^2\|\tilde g\|_{H^1(\Omega)}.$$
\end{prop}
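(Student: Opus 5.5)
The plan is to define $u$ directly from the solution $w$ given by Proposition \ref{exunw}, setting $u(t,\cdot):=(-\Delta_{ND})^{-1}w(t,\cdot)$, and then transfer regularity and estimates through the bounded linear operator $(-\Delta_{ND})^{-1}\in\mathcal{L}(X^*,\tilde X)$. Appendix \ref{sec:A1} gives this operator norm at most $1$; in fact it is an isometry, since $\|(-\Delta_{ND})^{-1}F\|_{\tilde X}=\|F\|_{X^*}$ by Riesz/Lax--Milgram for the Dirichlet form $(\nabla\cdot,\nabla\cdot)_0$ on $X$.

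\textbf{Existence.} Because $(-\Delta_{ND})^{-1}$ is bounded linear and independent of $t$, it commutes with difference quotients in time. Hence $w\in C^1([0,T];X^*)$ gives $u\in C^1([0,T];\tilde X)$ with
\[
u'(t,\cdot)=(-\Delta_{ND})^{-1}w'(t,\cdot)
\]
for every $t\in[0,T]$. The remaining requirements of Definition \ref{weakudefi} are inherited from Proposition \ref{exunw}. First, $w$ satisfies \eqref{eq.BVPICweak} pointwise in $t$. Testing that equation against $\varphi\in X$, and using the definitions of $B$ and $-\Delta\omega$ as elements of $X^*$, gives exactly the variational identity of Definition \ref{weakudefi}. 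Second, the initial condition is $w(0)=\Delta\tilde\omega$. Consistently, $u(0)=(-\Delta_{ND})^{-1}\Delta\tilde\omega=-\tilde\omega$ by Remark \ref{tildeomegaXtilde}, since $\tilde\omega\in\tilde X$.

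\textbf{Uniqueness.} Suppose $u_1,u_2$ are two weak solutions, with associated $w_1,w_2$. Each $w_i$ solves the variational problem, which is the weak form of \eqref{eq.BVPICweak}. Each is then a strict $C^0$-solution of that Cauchy problem, because the operator $B$ is bounded on $X^*$ with $D(B)=X^*$. Uniqueness in Theorem \ref{teo.exuniquesolHYthnonhom} (equivalently, Proposition \ref{exunw}) gives $w_1=w_2$, and therefore $u_1=u_2$. Injectivity of $(-\Delta_{ND})^{-1}$ is not even needed here; uniqueness of $w$ is all that is used.

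\textbf{Estimates.} Recall that $\|\cdot\|_{\tilde X}=\|\cdot\|_X$. Then
\[
\|u(t)\|_X\le\|w(t)\|_{X^*},\qquad \|u'(t)\|_X\le\|w'(t)\|_{X^*},
\]
and the bounds of Proposition \ref{exunw} apply verbatim with $a=\sigma/\e$. For the $C^1$ norm, take the supremum over $t\in[0,T]$ and add the two bounds. Use $\max\{1,t\}\le\max\{1,T\}$ together with
\[
1+\max\{1,\|a\|_{L^\infty(\Omega)}\}\le 1+\|a\|_{L^\infty(\Omega)}+1,
\]
and absorb the result into $\max\{1,T\}(1+\|a\|_{L^\infty(\Omega)})^2$ as done in Proposition \ref{exunw}.

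\textbf{Main obstacle.} The only delicate point is identifying the weak formulation of Definition \ref{weakudefi} with the abstract equation, so that uniqueness transfers. The sign conventions need checking: $\langle \div(a\nabla v)|\varphi\rangle=-(a\nabla v,\nabla\varphi)_0$ and $-\langle\Delta\omega|\varphi\rangle=(\nabla\omega,\nabla\varphi)_0$. With these, the abstract equation $w'=Bw-\Delta\omega$ tested against $\varphi\in X$ reads $\langle w'|\varphi\rangle+(a\nabla u,\nabla\varphi)_0=(\nabla\omega,\nabla\varphi)_0$, which is exactly the identity in Definition \ref{weakudefi}.
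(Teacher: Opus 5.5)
Your proposal follows the paper's proof. You define $u(t,\cdot):=(-\Delta_{ND})^{-1}w(t,\cdot)$ with $w$ from Proposition~\ref{exunw}, and push regularity and bounds through the norm-one operator $(-\Delta_{ND})^{-1}\in\mathcal{L}(X^*,\tilde X)$. Your difference-quotient argument for $u'=(-\Delta_{ND})^{-1}w'$ is a shorter substitute for the paper's test-function argument with $\xi\in C_0^\infty(0,T)$. Your uniqueness step (any such $w$ is a strict $C^0$-solution because $B\in\mathcal{L}(X^*)$) makes explicit what the paper leaves packed into Definition~\ref{weakudefi}.

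One step fails as written: the final absorption into the $C^1$ bound. With $a=\sigma/\e$, adding the two pointwise bounds gives $2\max\{1,c_\Omega\}(1+\|a\|_{L^\infty})\big(\max\{1,T\}+\max\{1,\|a\|_{L^\infty}\}\big)\|\tilde g\|_{H^1(\Omega)}$. The inequality $\max\{1,T\}+\max\{1,\|a\|_{L^\infty}\}\le\max\{1,T\}(1+\|a\|_{L^\infty})$ is false (take $a=0$), and your chain of inequalities produces the constant $4$, not $2$.

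The cheapest repair is to note $\|u\|_{C^1([0,T];\tilde X)}\le\|w\|_{C^1([0,T];X^*)}$ and quote the $C^1$ bound of Proposition~\ref{exunw} directly. That bound has the same defect if one tries to derive it from its two pointwise bounds, so here is a self-contained version.

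The equation for $\omega$ gives $(\nabla\omega,\nabla\varphi)_0=-(a\nabla\omega,\nabla\varphi)_0$ for $\varphi\in X$. Hence $w_1=Bw_0-\Delta\omega$ acts as $\varphi\mapsto-(a\nabla\tilde g,\nabla\varphi)_0$. This yields $\|w'(t)\|_{X^*}\le\|a\|_{L^\infty}\|\nabla\tilde g\|_0$ and $\|{-\Delta\omega}\|_{X^*}\le\|a\|_{L^\infty}\|\nabla\omega\|_0\le\|a\|_{L^\infty}(2+\|a\|_{L^\infty})\|\nabla\tilde g\|_0$. Combine these with $\|\nabla\tilde\omega\|_0\le(1+\|a\|_{L^\infty})\|\nabla\tilde g\|_0$, $\|w(t)\|_{X^*}\le\|w_0\|_{X^*}+t\|{-\Delta\omega}\|_{X^*}$ and $\|w'(t)\|_{X^*}\le\|w_1\|_{X^*}$. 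The sum of the two suprema is then at most $\max\{1,T\}(1+4\|a\|_{L^\infty}+\|a\|_{L^\infty}^2)\|\nabla\tilde g\|_0\le 2\max\{1,T\}(1+\|a\|_{L^\infty})^2\|\tilde g\|_{H^1(\Omega)}$. This gives the stated constant.
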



\begin{proof}
Since $w(t,\cdot)\in X^*$ for every $t\in [0,T]$, the function
$$u(t,\cdot):=(-\Delta_{ND})^{-1}w(t,\cdot)\in \tilde X,\quad\text{for every $t\in [0,T]$}$$
is well-posed.
We want to prove $u(t,\cdot)\in C^1([0,T];\tilde X)$. By the pointwise definition of $u$ with respect to the variable $t$, we have
$$(\nabla u(t,\cdot),\nabla\varphi)_0=\langle w(t,\cdot)|\varphi\rangle_{X^*,X},\quad\text{for every $\varphi\in X, t\in [0,T]$}.$$
Since $w'(t,\cdot)\in X^*$, setting $z(t,\cdot):=(-\Delta_{ND})^{-1}w'(t,\cdot) \in \tilde X$ for every $t\in [0,T]$, we find that $z(t,\cdot)$ is the unique solution of
\begin{equation}\label{eqqq}
(\nabla z(t,\cdot),\nabla\varphi)_0=\langle w'(t,\cdot)|\varphi\rangle_{X^*,X},\quad\text{for every $\varphi\in X, t\in [0,T]$}.
\end{equation}
By the linearity and boundedness of the operator $(-\Delta_{ND})^{-1}$, we have $z(t,\cdot)-z(s,\cdot)=(-\Delta_{ND})^{-1}(w'(t,\cdot)-w(s,\cdot))$, for every $t,s\in [0,T]$, with the estimate
$$\|z(t,\cdot)-z(s,\cdot)\|_{\tilde X}\leq \|w'(t,\cdot)-w'(s,\cdot)\|_{X^*}.$$
Since $w'\in C^0([0,T]; X^*)$, then $z\in C^0([0,T]; \tilde X)$. Analogously, $u\in C^0([0,T];\tilde X)$.
Let $\xi\in C_0^\infty(0,T)$. Multiplying \eqref{eqqq} by $\xi$ and integrating over $[0,T]$, we obtain
\begin{align*}
\int_0^T (\nabla & z(t,\cdot), \nabla\varphi)_0\,\xi(t)\,\d t =\int_0^T \langle w'(t,\cdot)|\varphi\rangle_{X^*,X}\,\xi(t)\,\d t \\
&= \Big\langle \left(\int_0^T w'(t,\cdot)\,\xi(t)\,\d t \right) \Big|\varphi\Big\rangle_{X^*,X} = \Big\langle \left(-\int_0^T w(t,\cdot)\,\xi'(t)\,\d t \right) \Big|\varphi\Big\rangle_{X^*,X} \\
&= -\int_0^T \langle w(t,\cdot) |\varphi\rangle_{X^*,X}\,\xi'(t)\,\d t= -\int_0^T (\nabla u(t,\cdot),\nabla\varphi)_0\,\xi'(t)\,\d t.
\end{align*}
Now, since $z\in C^0([0,T]; \tilde X)$ and $\nabla :\tilde X \to \left(L^2(\Omega)\right)^n$ is a bounded linear operator (since $\|\nabla \phi\|_0=\|\phi\|_{\tilde X}$ for every $\phi\in \tilde X$), we can commute $\nabla$ with the integral $\int_0^T\cdot\,dt$, obtaining
%
%
$$\left(\nabla\int_0^T \Big(z(t,\cdot)\xi(t)+u(t,\cdot)\xi'(t)\Big)\,\d t, \nabla \varphi\right)_0=0\quad\text{for every $\varphi\in X, \xi\in C_0^\infty(0,T)$}.$$
Since $z(t,\cdot), u(t,\cdot)\in X$, then $\int_0^T z(t,\cdot)\xi(t)\,\d t, \int_0^T u(t,\cdot)\xi'(t)\,\d t\in X$, 
so, choosing $\varphi:=\int_0^T \Big(z(t,\cdot)\xi(t)+u(t,\cdot)\xi'(t)\Big)\,\d t\,\in X$, we have
$$\int_0^T \Big(z(t,\cdot)\xi(t)+u(t,\cdot)\xi'(t)\Big)\,\d t=0\quad\text{in $X$, for every $\xi\in C_0^\infty(0,T)$},$$
implying that $z=u'$.
Denoting by $\frac{\d}{\d t}$ the operator that associates a function to its weak time-derivative, we proved
$$(-\Delta_{ND})^{-1}\frac{\d}{\d t}=\frac{\d}{\d t}(-\Delta_{ND})^{-1}\quad\text{in $X^*$}$$
implying $u(t,\cdot)=(-\Delta_{ND})^{-1}w(t,\cdot)\in C^1([0,T];\tilde X)$. Eventually, by the estimates on $w$ in Proposition \ref{exunw}, we also have
\begin{align*}
\|u(t,\cdot)\|_{\tilde X}&=\|(-\Delta_{ND})^{-1}w(t,\cdot)\|_{\tilde X}\leq\|w(t,\cdot)\|_{X^*}\\
&\leq 2\max\{1,t\}\max\{1,c_\Omega\}\left(1+\|\sigma\|_{L^\infty(\Omega)}/\e\right)\|\tilde g\|_{1}.
\end{align*}
\begin{align*}
\|u'(t,\cdot)\|_{\tilde X}&=\|\frac{\d}{\d t}\Big((-\Delta_{ND})^{-1}w(t,\cdot)\Big)\|_{\tilde X}\\
&=\|(-\Delta_{ND})^{-1}w'(t,\cdot)\|_{\tilde X}\leq\|w'(t,\cdot)\|_{X^*}\\
&\leq 2\max\{1,c_\Omega\}\max\{1,\|\sigma\|_{L^\infty(\Omega)}/\e\}\left(1+\|\sigma\|_{L^\infty(\Omega)}/\e\right)\|\tilde g\|_{1},
\end{align*}
completing the proof.
\end{proof}

Combining Proposition \ref{solPBtime} with Proposition \ref{solPBstatic}, recalling that $\tilde X\subseteq X$ and using the \textit{Poincaré inequality}, we have proved Theorem \ref{Solution1}.

\medskip

\begin{remark}
By construction, $D(B)=X^*$. Moreover, by previous computations on $A$, we showed that $-B$ is a monotone operator and $R(Id_{X^*}+B)=X^*$. Thus, by Theorem $3.1$ in \cite{LumerPhillips61}, we find $-B$ generates (or $-B$ is the infinitesimal generator of) a strongly continuous contraction semigroup on $X^*$ denoted by $[0,+\infty[\,\ni t\mapsto T(t):=e^{-Bt}\in \mathcal{L}(X^*)$, namely, 
\begin{enumerate}
\item $T(0)=Id_{X^*}$;
\item $T(t+s)=T(t)T(s)$ for every $t,s\in\,[0,+\infty[$;
\item for every $\zeta\in X^*$, $\dsy\lim_{\,\,\,t\to 0^+}\|T(t)\zeta-\zeta\|_{X^*}=0$.
\end{enumerate}
Then, by Lemma $8.1$ in \cite{sinestrari}, we can represent the solution $w$ of \eqref{eq.BVPICweak} as
\begin{align*}
w(t,\cdot)&=e^{-Bt}(\Delta\tilde\omega)+\int_0^t e^{-B(t-s)}(-\Delta \omega)\,ds\\
&=e^{-Bt}(\Delta\tilde\omega)+\left(\int_0^t e^{-B(t-s)}\,ds\right)(-\Delta \omega)
\end{align*}
Recalling that $u(t,\cdot)=(-\Delta_{ND})^{-1}w(t,\cdot)$, we obtain
$$u(t,\cdot)=(-\Delta_{ND})^{-1}e^{-Bt}\left(\Delta\tilde\omega+\int_0^t e^{Bs}\,ds\,(-\Delta\omega)\right).$$
This representation formula makes explicit the linear dependence of $u$ on the initial data $\tilde\omega$ and on the static component $\omega$. Moreover, it shows that the time-dependent part $u$ is obtained by composing the resolvent $(-\Delta_{ND})^{-1}$ with the contraction semigroup generated by $-B$, which is the natural evolution operator associated with the degenerate parabolic structure of the problem.
\end{remark}

We can now prove the following energy estimate.

\begin{prop}
Under the assumptions of Proposition~\ref{solPBtime}, for every 
$0<\delta_0<\delta$ and every $t\in[0,T]$, the following estimate holds:
\begin{align*}
\|\nabla u(t)\|_{0}^{2}
&+\int_{0}^{t}\!\left(
\int_{(\sigma/\varepsilon>\delta)}\frac{\sigma(x)}{\varepsilon}\,|\nabla u(s,x)|^{2}\,dx
+\delta_{0}\!\int_{(\delta_{0}\le\sigma/\varepsilon\le\delta)}|\nabla u(s,x)|^{2}\,dx
\right)ds \\
&\le 
\max\!\left\{\frac{1}{\delta},\frac{1}{\delta_{0}}\right\}\, t\,\|\nabla\omega\|_{0}^{2}
+\|\nabla\tilde\omega\|_{0}^{2}.
\end{align*}
\end{prop}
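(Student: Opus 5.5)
The plan is a standard energy method: test the weak formulation of \eqref{eq.BVPICweak} with the solution $u(t)$ itself, integrate in time, and then bound the forcing term coming from $\omega$ by Young's inequality, split according to the level sets of $a=\sigma/\varepsilon$.

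\textbf{Step 1: the equation for $u$.} By Proposition \ref{solPBtime} we know $u\in C^1([0,T];\tilde X)$ and $u(t)=(-\Delta_{ND})^{-1}w(t)$, with $u'=(-\Delta_{ND})^{-1}w'$. Hence
$$\langle w'(t)|\varphi\rangle_{X^*,X}=(\nabla u'(t),\nabla\varphi)_0 .$$
Substituting this into Definition \ref{weakudefi} gives, for every $t\in[0,T]$ and $\varphi\in X$,
$$(\nabla u'(t),\nabla\varphi)_0+(a\nabla u(t),\nabla\varphi)_0=(\nabla\omega,\nabla\varphi)_0 .$$

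\textbf{Step 2: the energy identity.} Since $u(t)\in\tilde X\subseteq X$, we may take $\varphi=u(t)$. Because $u\in C^1([0,T];\tilde X)$, we have $\frac{\d}{\d t}\|\nabla u\|_0^2=2(\nabla u',\nabla u)_0$ classically. Integrating on $[0,t]$ and using $u(0)=-\tilde\omega$ yields
$$\|\nabla u(t)\|_0^2+2\int_0^t\!\!\int_\Omega a|\nabla u|^2\,dx\,ds=\|\nabla\tilde\omega\|_0^2+2\int_0^t(\nabla\omega,\nabla u(s))_0\,ds .$$

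\textbf{Step 3: bounding the forcing term (main obstacle).} The term $2(\nabla\omega,\nabla u)_0$ must be controlled by half of the dissipation plus a multiple of $\|\nabla\omega\|_0^2$. The weight $a$ may vanish, so absorption fails on $\{a<\delta_0\}$.
\begin{itemize}
\item On $\{a\ge\delta_0\}$, I would use the weighted Young inequality
$$2|\nabla\omega||\nabla u|\le a|\nabla u|^2+a^{-1}|\nabla\omega|^2\le a|\nabla u|^2+\max\{1/\delta,1/\delta_0\}|\nabla\omega|^2 .$$
\item On $\{a<\delta_0\}$, my first attempt is to use the static equation in Definition \ref{weakomega} with test function $u(s)\in X$. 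This gives $(\nabla\omega,\nabla u)_0=-(a\nabla\omega,\nabla u)_0$, which removes the contribution of the degenerate set $\{a=0\}$. The remaining piece would then be handled by the Young inequality $2a|\nabla\omega||\nabla u|\le a|\nabla u|^2+a|\nabla\omega|^2$.
\end{itemize}
The difficulty is making the two devices compatible, because the identity is global rather than localized to a level set. If this does not close with the stated constant, I would fall back on proving the bound with the extra harmless constant produced by the identity, for instance $\|a\|_{L^\infty}$ or $\delta_0$ on the small-conductivity set.

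\textbf{Step 4: conclusion.} After absorbing $\int_0^t\!\int_\Omega a|\nabla u|^2$ into the left-hand side, what remains on the left is at least $\int_0^t\!\int_\Omega a|\nabla u|^2$. Restricting this to $\{a>\delta\}\cup\{\delta_0\le a\le\delta\}$ and using $a\ge\delta_0$ on the second set gives exactly the left-hand side of the statement. The right-hand side becomes $\max\{1/\delta,1/\delta_0\}\,t\,\|\nabla\omega\|_0^2+\|\nabla\tilde\omega\|_0^2$.
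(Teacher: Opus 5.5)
\textbf{There is a gap, and it is exactly where you flag it in Step 3.} The identity $(\nabla\omega,\nabla u(s))_0=-(a\nabla\omega,\nabla u(s))_0$, with $a=\sigma/\varepsilon$, comes from testing the static equation with $u(s)\in X$, and it holds only globally. You cannot apply it on $\{a<\delta_0\}$ while keeping the unweighted form on $\{a\ge\delta_0\}$. More generally, for any constant $\theta$ the forcing equals $\big((1-\theta(1+a))\nabla\omega,\nabla u\big)_0$. A pointwise Young bound can only be absorbed where $a>0$, so on $\{a=0\}$ (positive measure in the cases of interest) the coefficient $1-\theta$ must vanish, which forces $\theta=1$. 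Used globally, your argument gives the correct estimate
\[
\|\nabla u(t)\|_0^2+\int_0^t\!\!\int_\Omega a|\nabla u|^2\,dx\,ds\le\|\nabla\tilde\omega\|_0^2+t\int_\Omega a|\nabla\omega|^2\,dx .
\]
Its left side dominates the one in the statement. Its right side, however, is $t\int_\Omega a|\nabla\omega|^2$, not $\delta_0^{-1}t\|\nabla\omega\|_0^2$, and the former is bounded by the latter only under extra assumptions such as $\|a\|_{L^\infty(\Omega)}\le 1/\delta_0$. Without the identity, the term $\int_{(a<\delta_0)}|\nabla u|^2$ produced by Young's inequality has nothing on the left to be absorbed into. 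So your claim in Step 4 that the right-hand side becomes ``exactly'' the stated one is unsupported.

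\textbf{The paper's proof does not close this gap either, and the stated bound is false in general.} The paper applies Young with $\delta_1=\delta_0$ on $\{a\le\delta\}$ and then drops $\int_{(a<\delta_0)}(2a-\delta_0)|\nabla u|^2\,dx$ as if it were nonnegative. That integrand is negative on $\{a<\delta_0/2\}$, in particular on $\{\sigma=0\}$. To see that the bound genuinely fails, take the one-dimensional analogue: $\Omega=(0,1)$ (or a box with Neumann lateral faces and data depending only on $x_1$), $B_D=\{0,1\}$, $\tilde g(x)=x$, with $a=0$ on $(0,\ell)$ and $a=M$ on $(\ell,1)$. Then $(1+a)\omega'\equiv c:=\frac{1+M}{1+\ell M}$, and $\partial_x u(t)\to -c/(1+M)$ exponentially on $(\ell,1)$. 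Hence the left side of the statement grows like $t\,\frac{M(1-\ell)c^2}{(1+M)^2}=t\int_\Omega a|\nabla\omega|^2$, while the claimed right side grows like $\frac{t}{\delta_0}c^2\big(\ell+\frac{1-\ell}{(1+M)^2}\big)$. For $\ell=\frac14$, $M=10$, $\delta_0=9<\delta<10$, these rates are about $0.61$ and $0.28$, so the inequality fails for large $t$. Your fallback estimate above is therefore not a weaker substitute: it is the correct version of the result, and in this example it is sharp.
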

\begin{proof}
Let $s\in\,]0,T]$ and let us set $a:=\sigma/\e$. We have
\begin{align*}
\frac{1}{2}\frac{\d}{\d s} \|\nabla u(s,\cdot)\|_0^2&=(\nabla u'(s,\cdot),\nabla u(s,\cdot))=\langle (-\Delta) u'(s,\cdot)|u(s,\cdot) \rangle_{X^*,X}\\
&=\langle w'(s,\cdot)|u(s,\cdot) \rangle_{X^*,X}.
\end{align*}
Using the weak formulation of Proposition \ref{solPBtime} with the test function $\varphi:=u(s,\cdot)\in \tilde X\subseteq X$, we obtain
\begin{equation}\label{Energiastima}
\frac{1}{2}\frac{\d}{\d s} \|\nabla u(s,\cdot)\|_0^2+\int_\Omega\,a(x)\langle\nabla u(s,x),\nabla u(s,x)\rangle\,dx=\int_\Omega\langle\nabla\omega,\nabla u(s,x)\rangle\,dx.
\end{equation}
Let $\delta>0$. We consider
$$\int_\Omega\langle\nabla\omega,\nabla u(s,x)\rangle\,dx=\int_{(a\leq \delta)}\langle\nabla\omega,\nabla u(s,x)\rangle\,dx+\int_{(a> \delta)}\langle\nabla\omega,\nabla u(s,x)\rangle\,dx.$$
Now, for $\delta_1>0$, by Cauchy-Schwarz and weighted Young inequalities, we have
\begin{align*}
\int_{(a\leq \delta)}\langle\nabla\omega,\nabla u(s,x)\rangle\,dx&\leq \frac{1}{2\delta_1}\int_{(a\leq \delta)}|\nabla\omega|^2\,dx+\frac{\delta_1}{2}\int_{(a\leq \delta)}|\nabla u(s,x)|^2\,dx.
\end{align*}
Analogously, for $\delta_2>0$,
\begin{align*}
\int_{(a> \delta)}\langle\nabla\omega,&\nabla u(s,x)\rangle\,dx=\int_{(a> \delta)}\frac{\sqrt{a}}{\sqrt{a}}\langle\nabla\omega,\nabla u(s,x)\rangle\,dx\\
&\leq \frac{1}{\sqrt{\delta}}\int_{(a> \delta)}\sqrt{a}\,\langle\nabla\omega,\nabla u(s,x)\rangle\,dx\\
&\leq \frac{1}{2\delta_2 \sqrt{\delta}}\int_{(a> \delta)}|\nabla\omega|^2\,dx+\frac{\delta_2}{2\sqrt{\delta}}\int_{(a> \delta)}a\,|\nabla u(s,x)|^2\,dx.
\end{align*}
Choosing $\delta_2=\sqrt{\delta}$ (so that $1-\frac{\delta_2}{2\sqrt{\delta}}=\frac{1}{2}$), putting in the left side of \eqref{Energiastima} the last term above and multiplying everything by $2$, we gain
\begin{align*}
\frac{\d}{\d s} \|\nabla u(s,\cdot)\|_0^2+&\int_{(a>\delta)}\,a |\nabla u(s,x)|^2\,dx+\int_{(a\leq\delta)}\,(2a-\delta_1) |\nabla u(s,x)|^2\,dx\\
&\leq \frac{1}{\delta_1}\int_{(a\leq  \delta)}|\nabla\omega|^2\,dx+\frac{1}{\delta}\int_{(a> \delta)}|\nabla\omega|^2\,dx.
\end{align*}
Let $0<\delta_0<\delta$. We can estimate
\begin{align*}
\int_{(a\leq\delta)}\,(2a-\delta_1) |\nabla u(s,x)|^2\,dx&\geq \int_{(\delta_0\leq a\leq \delta)}\,(2a-\delta_1) |\nabla u(s,x)|^2\,dx\\
&\geq (2\delta_0-\delta_1)\int_{(\delta_0\leq a\leq \delta)}\, |\nabla u(s,x)|^2\,dx
\end{align*}
Choosing now $\delta_1:=\delta_0$, we have
\begin{align*}
\frac{\d}{\d s} \|\nabla u(s,\cdot)\|_0^2+&\int_{(a>\delta)}\,a |\nabla u(s,x)|^2\,dx+\delta_0\int_{(\delta_0\leq a\leq\delta)}\,|\nabla u(s,x)|^2\,dx\\
&\leq \max\left\{\frac{1}{\delta_1},\frac{1}{\delta}\right\}\|\nabla\omega\|^2_0
\end{align*}
Integrating now on $s\in [0,t]$ and recalling that $u(0,\cdot)=\tilde\omega$, we complete the proof.
\end{proof}

\begin{coro}
For every $\delta>0$,
$$\limsup_{t\to+\infty}\,\frac{1}{t} \int_0^t \left(\int_{(\sigma/\e\,>\delta)} |\nabla u(s,x)|^2\,d x\right)\,ds\leq \frac{1}{\delta^2} \|\nabla\omega\|_{0}^2.$$
\end{coro}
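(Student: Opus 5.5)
We derive the corollary directly from the energy estimate of the preceding proposition. Fix $\delta>0$ and any $0<\delta_0<\delta$, and write $a:=\sigma/\e$. That estimate holds for every $t\in[0,T]$, and $T>0$ is arbitrary. The constants on its right-hand side do not depend on $T$: they involve only $\omega$, $\tilde\omega$, $\delta$ and $\delta_0$, and $\omega$ is static. Hence the estimate holds for every $t\ge0$, the solution being defined on each interval $[0,T]$ and, by uniqueness in Proposition~\ref{solPBtime}, consistent across intervals.

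Drop the nonnegative terms $\|\nabla u(t)\|_0^2$ and the $\delta_0$-integral from the left-hand side. On the set $(a>\delta)$ we have $a\ge\delta$, so
\begin{equation*}
\delta\int_0^t\int_{(a>\delta)}|\nabla u(s,x)|^2\,dx\,ds\le \int_0^t\int_{(a>\delta)}a\,|\nabla u(s,x)|^2\,dx\,ds\le \max\left\{\frac1\delta,\frac1{\delta_0}\right\}t\,\|\nabla\omega\|_0^2+\|\nabla\tilde\omega\|_0^2 .
\end{equation*}
Divide by $\delta t$ and let $t\to+\infty$. The term $\|\nabla\tilde\omega\|_0^2/(\delta t)$ tends to zero, which leaves
\begin{equation*}
\limsup_{t\to+\infty}\frac1t\int_0^t\int_{(a>\delta)}|\nabla u|^2\,dx\,ds\le \frac1\delta\max\left\{\frac1\delta,\frac1{\delta_0}\right\}\|\nabla\omega\|_0^2=\frac{1}{\delta\,\delta_0}\|\nabla\omega\|_0^2,
\end{equation*}
where the last equality uses $\delta_0<\delta$. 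Finally, let $\delta_0\uparrow\delta$. The left-hand side does not depend on $\delta_0$, so we obtain the bound $\delta^{-2}\|\nabla\omega\|_0^2$ claimed in the statement.

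The only delicate point is the first one: the energy inequality must hold on arbitrarily long time intervals with constants independent of $T$. This is the case because the proof of the energy proposition integrates the identity \eqref{Energiastima} over $[0,t]$ with purely local-in-time constants, and the forcing term $-\Delta\omega$ is time-independent. A minor extra check is that the preceding proposition requires $\delta_0<\delta$ strictly, which is why we pass to the limit $\delta_0\uparrow\delta$ rather than setting $\delta_0=\delta$.
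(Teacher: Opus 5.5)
Your deduction from the energy proposition is exactly the argument the paper leaves implicit: drop the nonnegative terms, use $a>\delta$ on $(a>\delta)$, divide by $\delta t$, let $\delta_0\uparrow\delta$, and extend to all $t$ by uniqueness. As a deduction it is sound. The gap is in the result you use as a black box, and it sits precisely where you declared things unproblematic: you checked only that the energy estimate is $T$-independent, not that its proof is valid.

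In that proof, the inequality $\int_{(a\le\delta)}(2a-\delta_1)|\nabla u|^2\,dx\ge\int_{(\delta_0\le a\le\delta)}(2a-\delta_1)|\nabla u|^2\,dx$ (with $\delta_1=\delta_0$) silently drops $\int_{(a<\delta_0)}(2a-\delta_0)|\nabla u|^2\,dx$. That term is negative wherever $a<\delta_0/2$, in particular on $\{\sigma=0\}$. The term $\frac{\delta_1}{2}\int_{(a\le\delta)}|\nabla u|^2$ produced by Young's inequality simply cannot be absorbed where there is no dissipation. So the energy estimate is unproved. In fact both it and the corollary are false in general.

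\emph{Counterexample.} Testing the equation for $\omega$ with $\varphi\in X$ gives $(\nabla\omega,\nabla\varphi)_0=-(a\nabla\omega,\nabla\varphi)_0$.
\begin{itemize}
\item Suppose $\nabla\tilde g=0$ a.e.\ on $E:=\{\sigma>0\}$. Then $u\equiv-\tilde\omega$ is the unique solution: $w'\equiv0$ and $(a\nabla u,\nabla\varphi)_0=-(a\nabla\omega,\nabla\varphi)_0=(\nabla\omega,\nabla\varphi)_0$. Moreover $\nabla u=-\nabla\omega$ on $E$.
\item Take $\sigma=\e M\chi_E$. For $\delta<M$ the corollary reads $\int_E|\nabla\omega|^2\le\delta^{-2}\|\nabla\omega\|_0^2$. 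Letting $\delta\uparrow M$, this is equivalent to $\frac{M-1}{M+1}\|j\|^2_{L^2(E)}\le\|j\|^2_{L^2(\Omega\setminus E)}$ for the current $j=(1+a)\nabla\omega$.
\item Now take the box $(0,L)\times(0,H)$ with corners rounded, and a thin strip $E=(0,L)\times(0,h)$. Ground the strip only at $\{0\}\times(0,h)$, set $g=1$ on $\{y=H\}$, and impose Neumann conditions elsewhere. Let $\tilde g=\eta(y)$ with $\eta=0$ on $[0,2h]$ and $\eta=1$ near $H$.
\item As $M\to\infty$ the strip becomes an equipotential at $0$ and collects current $1/(H-h)$ per unit length. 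The current through the cross-section at $x$ is then $(L-x)/(H-h)$.
\item Cauchy--Schwarz on cross-sections gives, in the limit, $\|j\|^2_{L^2(E)}\ge L^3/(3h(H-h)^2)$. Meanwhile $\|j\|^2_{L^2(\Omega\setminus E)}\to L/(H-h)$. So the inequality fails for $h$ small, for example $L=H=1$, $h=1/10$.
\end{itemize}

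\emph{What can actually be proved.} Use the identity above when testing with $\varphi=u(s)$. This gives $\frac12\frac{\d}{\d s}\|\nabla u\|_0^2+\int_\Omega a|\nabla u|^2=-\int_\Omega a\langle\nabla\omega,\nabla u\rangle$. Hence $\frac{\d}{\d s}\|\nabla u\|_0^2+\int_\Omega a|\nabla u|^2\le\int_\Omega a|\nabla\omega|^2$. With this in place of the paper's energy estimate, your argument yields $\limsup_{t\to+\infty}\frac1t\int_0^t\int_{(a>\delta)}|\nabla u|^2\,dx\,ds\le\delta^{-1}\int_\Omega a|\nabla\omega|^2\,dx$, not the stated $\delta^{-2}\|\nabla\omega\|_0^2$.
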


\bigskip

\section{Regularity results}\label{regularity}

In this section, we establish regularity results for both the static component $\omega(x)$ and the dynamic component $u(t,x)$. We recall that the electrical conductivity $\sigma$ determines the geometry of the device. To motivate the following analysis, we consider the physically relevant detector configuration described in Section \ref{realcase} (see also Fig.\ref{fig:repr}). In that case, the conductivity $\sigma$ can be modeled as a piecewise constant function as follows:
\begin{equation*}
\left\{
\begin{array}{ll}
\sigma = 0, & \text{in } \overline{\Omega} \setminus \left( C_+ \cup \displaystyle\bigcup_{i=1}^{m_e} C_-^i \right), \\[4pt]
\sigma = c_0, & \text{in } C_+, \\[4pt]
\sigma = c_i, & \text{in } C_-^i,\quad i = 1, \ldots, m_e,
\end{array}
\right.
\end{equation*}
where $c_i > 0$ are constants, and the electrode regions $C_+,C_-^i$ are compact subsets of positive measure, with $d(\de C_+,\de C_-^i)>0, \,d(\de C_-^i,\de C_-^j)>0$ for every $i,j=1,\ldots,m_e, i\neq j$. This setting naturally leads us to focus on regions where the conductivity is constant.

\medskip
In order to obtain global regularity results, we recall that, as pointed out in Chapter $5$ in \cite{adams}, the uniform $C^d$-regularity property with $d\geq 1$ implies the segment property (see the precise definition in Chapter $5$ in \cite{adams} or in Section $9.8.1$ in \cite{DiBenedettoGianazza23}).

\medskip

Let us recall the following regularity results proved in \cite{DiBenedettoGianazza23} and \cite{evans}.

\begin{prop}[see Theorem $2$-$3$ in Section $6.3.1$ in \cite{evans}]\label{Hmregularity}
Let $\Omega'\subseteq\R^n$ be an open bounded set.
Let $u\in H^1(\Omega')$ be a weak solution of $-\Delta u=f$ in $\Omega'$ with $f\in H^m(\Omega')$ for some $m\in\N$. Then, $u\in H^{m+2}_{loc}(\Omega')$ and, for every compact set $K\subset \Omega'$, there exists a constant $c=c(m,\Omega',K)>0$ such that
$$\|u\|_{H^{m+2}(K)}\leq c\big(\|f\|_{H^m(\Omega')}+\|u\|_{L^2(\Omega')}\big).$$
Moreover, if $f\in C^\infty(\Omega')$, then $u\in C^\infty(\Omega')$.
\end{prop}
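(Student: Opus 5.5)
Since this statement is cited from Evans (Theorems 2 and 3 of Section 6.3.1), the plan is to reproduce the classical interior regularity argument specialized to $L=-\Delta$. The argument has three parts: an $H^2$ step by difference quotients, an induction on $m$, and the passage to $C^\infty$ via Sobolev embedding.

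\emph{Base case $m=0$.} Fix a compact $K\subset\Omega'$. Choose an open $W$ with $K\subset W\Subset\Omega'$ and a cutoff $\zeta\in C_0^\infty(W)$ with $\zeta\equiv1$ on $K$. Test the weak formulation $(\nabla u,\nabla v)_0=(f,v)_0$ with $v=-D_k^{-h}(\zeta^2 D_k^h u)$, where $D_k^h$ is the difference quotient in direction $e_k$ and $|h|$ is small enough that $v$ is supported in $\Omega'$. Discrete integration by parts, Cauchy--Schwarz and the weighted Young inequality give
\[
\|\zeta D_k^h\nabla u\|_0^2\le c\big(\|f\|_{L^2(\Omega')}^2+\|\nabla u\|_{L^2(W)}^2\big),
\]
uniformly in $h$. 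The characterization of $H^1$ by bounded difference quotients then yields $\nabla u\in H^1(K)$. To remove $\nabla u$ from the right-hand side, I would first use the Caccioppoli estimate obtained by testing with $\zeta^2u$ on a slightly larger set $W'$:
\[
\|\nabla u\|_{L^2(W)}\le c\big(\|f\|_{L^2(\Omega')}+\|u\|_{L^2(\Omega')}\big).
\]
This gives the stated bound for $m=0$.

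\emph{Induction on $m$.} Assume the claim holds for $m$ and let $f\in H^{m+1}(\Omega')$. By the inductive hypothesis $u\in H^{m+2}_{loc}$. For a multi-index $|\alpha|=m+1$, the function $\tilde u:=D^\alpha u\in H^1_{loc}$ is a weak solution of $-\Delta\tilde u=D^\alpha f$ on any intermediate open set $W\Subset\Omega'$. This follows by testing with $(-1)^{|\alpha|}D^\alpha\varphi$ and integrating by parts, which is legitimate because the Laplacian has constant coefficients, so no commutator terms appear. Applying the $m=0$ case on nested sets $K\subset W\Subset W'\Subset\Omega'$ gives
\[
\|D^\alpha u\|_{H^2(K)}\le c\big(\|f\|_{H^{m+1}(\Omega')}+\|D^\alpha u\|_{L^2(W)}\big).
\]
The last term is controlled by the inductive estimate $\|u\|_{H^{m+2}(W)}\le c(\|f\|_{H^m}+\|u\|_{L^2})$, which closes the induction.

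\emph{$C^\infty$ statement.} If $f\in C^\infty(\Omega')$, then $f\in H^m(W)$ for every $m$ and every $W\Subset\Omega'$, although not necessarily on all of $\Omega'$. Applying the result on such $W$ gives $u\in H^m_{loc}$ for all $m$, and Sobolev embedding ($H^{m}\subset C^k$ whenever $m>k+n/2$) yields $u\in C^\infty(\Omega')$.

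\emph{Main obstacle.} The only delicate point is the bookkeeping of nested compact sets, so that the constant depends only on $(m,\Omega',K)$ and each step stays inside $\Omega'$. No boundary regularity is needed, which is why $\Omega'$ is only assumed open and bounded.
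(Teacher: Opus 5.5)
Your proposal is correct and follows the same route as the paper, which gives no proof of its own and simply cites Evans, Section 6.3.1. Your three steps are the classical argument there: the difference-quotient $H^2$ estimate combined with the Caccioppoli bound, induction on $m$ by differentiating the constant-coefficient equation, and Sobolev embedding for the $C^\infty$ claim, with the correct observation that for $f\in C^\infty(\Omega')$ one must work on subdomains $W\Subset\Omega'$ because $f$ need not lie in $H^m(\Omega')$.
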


\begin{prop}[see Proposition $11.1$ in Section $2$ in \cite{DiBenedettoGianazza23}]\label{strongLapl}
Let $\Omega'\subseteq\R^n$ be an open bounded set.
Let $u\in C^2(\Omega')$ be a solution of $-\Delta u=f$ in $\Omega'$ with $f\in C^{m,\alpha}_{loc}(\Omega')$ for some $m\in\N, \alpha\in\,[0,1[$. Then, $u\in C^{m+2,\alpha}_{loc}(\Omega')$ and, for every compact sets $K\subset K' \subset \Omega'$, there exists a constant $c=c(n,m,d(K,\de K'))>0$ such that
$$\|u\|_{C^{m+2,\alpha}(K)}\leq c\big(\|f\|_{C^{m,\alpha}(K')}+\|u\|_{L^\infty(K')}\big).$$
\end{prop}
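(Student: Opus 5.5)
The plan is the classical Schauder interior theory for the Poisson equation: first the case $m=0$ via the Newtonian potential, then higher $m$ by differentiating the equation. I will work with $0<\alpha<1$. For $\alpha=0$ the statement with $C^{m,0}=C^m$ fails in general, since continuous $f$ does not force $u\in C^2$. That endpoint would have to be read with a Dini-type or almost-Lipschitz modification, and I would flag it rather than prove it.

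\textbf{Step 1 (localisation).} Fix compact sets $K\subset K'\subset\Omega'$ and put $\delta:=d(K,\de K')$. Choose a cutoff $\eta\in C_0^\infty(K')$ with $\eta\equiv1$ on the $\delta/2$-neighbourhood of $K$ and $|D^k\eta|\le c\,\delta^{-k}$. Let $N$ be the Newtonian potential of $\eta f$, that is $N=\Gamma*(\eta f)$ with $\Gamma$ the fundamental solution. Then $h:=u-N$ is harmonic near $K$. For $h$, the mean value property yields
\[
\|h\|_{C^{k}(K)}\le c(n,k,\delta)\,\|h\|_{L^\infty(K')}\le c\,\big(\|u\|_{L^\infty(K')}+\|N\|_{L^\infty}\big)
\]
for every $k$. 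It therefore suffices to estimate $N$.

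\textbf{Step 2 (the core estimate, $m=0$).} I would prove the Hölder (Korn) lemma for the potential. The functions $N$ and $\nabla N$ are bounded by $c\|f\|_{L^\infty(K')}$ by direct integration of the kernels. The second derivatives satisfy $D_{ij}N=\mathrm{p.v.}\,(D_{ij}\Gamma)*(\eta f)+\tfrac{\delta_{ij}}{n}\eta f$, and the key claim is
\[
[D^2N]_{C^{0,\alpha}}\le c(n,\alpha)\,\|\eta f\|_{C^{0,\alpha}}.
\]
To show it, fix $x,y$ with $\rho=|x-y|$. Split the integral into the ball $B_{2\rho}(x)$ and its complement. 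On the ball, use the cancellation $\int_{\de B_r}D_{ij}\Gamma=0$ to replace $f(z)$ by $f(z)-f(x)$, which gives a contribution of order $\rho^\alpha[f]_\alpha$. Outside the ball, use $|D^3\Gamma|\lesssim|z|^{-n-1}$ together with the mean value theorem, which again gives a contribution of order $\rho^\alpha[f]_\alpha$. This step is the main obstacle; the rest is bookkeeping. The product rule bounds $\|\eta f\|_{C^{0,\alpha}}$ by $c(\delta)\|f\|_{C^{0,\alpha}(K')}$. Combining with Step 1 gives the estimate for $m=0$ with constant $c(n,\alpha,\delta)$.

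\textbf{Step 3 (higher order by induction).} For $m\ge1$, first show that $u\in C^{m+2}$ locally, by differentiating the Newtonian potential, which commutes with derivatives applied to $\eta f$. Then, for $|\beta|\le m$, the function $D^\beta u$ satisfies $-\Delta D^\beta u=D^\beta f\in C^{0,\alpha}_{loc}$. Apply the $m=0$ estimate on a chain of intermediate compacts $K\subset K_1\subset\dots\subset K_m\subset K'$ with gaps $\delta/(m+1)$. At each stage, the lower-order quantity $\|D^\beta u\|_{L^\infty(K_j)}$ is controlled by the previous stage, ending with $\|u\|_{L^\infty(K')}$ and $\|f\|_{C^{m,\alpha}(K')}$. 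This produces a constant depending only on $n,m,\alpha$ and $d(K,\de K')$.
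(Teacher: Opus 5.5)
The paper does not prove this proposition; it imports it from DiBenedetto--Gianazza, so there is no internal argument to compare against. Your route is the classical one (as in Gilbarg--Trudinger, Chapter~4): a cutoff, the Newtonian potential, the H\"older--Korn bound for $D^2N$, a harmonic remainder, and differentiation for $m\geq 1$. It is sound in substance.

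You are right that the statement is defective at $\alpha=0$, and that the constant must depend on $\alpha$. One nuance: $u\in C^2(\Omega')$ is assumed, so for $m=0$ what fails at $\alpha=0$ is the estimate, not the regularity. Mollifications of $(x_1^2-x_2^2)(-\log|x|)^{1/2}$ near the origin have uniformly bounded $u$ and $\Delta u$ but unbounded $D^2u$. The endpoint does no damage in the paper: the right-hand side there is always $-\Delta\omega$, which vanishes where $\sigma$ is constant.

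There is one concrete gap: your argument does not yield a constant depending only on $n,m,\alpha,\delta$.

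\begin{itemize}
\item Your cutoff $\eta$ is $\equiv1$ on the whole $\delta/2$-neighbourhood of $K$. Hence $\mathrm{supp}\,\eta$ has diameter $R\geq\mathrm{diam}(K)+\delta$.
\item The bounds $|N|,|\nabla N|\leq c\|f\|_{L^\infty(K')}$ ``by direct integration of the kernels'' then hold only with $c$ depending on $R$. For example $\int_{\mathrm{supp}\,\eta}|x-z|^{1-n}\,dz\lesssim R$, and for $n=2$ even $\sup|N|$ grows like $R^2\log R$.
\item This dependence enters $\|h\|_{L^\infty}$ and hence every harmonic estimate.
\end{itemize}

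The true constant is independent of the size of $K'$, so this is a defect of the proof, not of the statement. The fix is to localise.

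\begin{itemize}
\item For each $x_0\in K$ (note $B_\delta(x_0)\subset K'$), take $\eta_{x_0}$ supported in $B_{3\delta/4}(x_0)$ with $\eta_{x_0}\equiv1$ on $B_{\delta/2}(x_0)$.
\item Run Steps 1--2 with $N_{x_0}=\Gamma*(\eta_{x_0}f)$. This controls $D^ju$ for $j\leq 2$ on $B_{\delta/4}(x_0)$, and the H\"older quotient of $D^2u$ for $|x-y|<\delta/4$.
\item For $x,y\in K$ with $|x-y|\geq\delta/4$, use the bound $2\sup_K|D^2u|\,(\delta/4)^{-\alpha}$.
\end{itemize}

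Two smaller points need to be written out.

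\begin{itemize}
\item $\sup|D^2N|$ does not follow from the seminorm bound. It needs its own estimate, obtained by splitting at a radius comparable to $\delta$ as in Step~2.
\item In the Korn lemma the ball $B_{2\rho}(x)$ is not centred at $y$. The cancellation for the $y$-term therefore leaves a boundary integral over $\partial B_{2\rho}(x)$ multiplied by $f(x)-f(y)$, which is $O(\rho^\alpha[f]_\alpha)$.
\end{itemize}

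Step~3 is fine. In fact $D^\beta N=\Gamma*D^\beta(\eta f)$ for $|\beta|\leq m$ gives the $C^{m+2,\alpha}$ bound directly, without the chain of compacts.
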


\begin{prop}[see Proposition $11.2$ in Section $2$ in \cite{DiBenedettoGianazza23}]\label{weakLapl}
Let $\Omega'\subseteq\R^n$ be an open bounded set.
Let $u\in L^1_{loc}(\Omega')$ be a local weak solution of $-\Delta u=f$ in $\Omega'$ with $f\in L^p_{loc}(\Omega')$ for some $p>n$. Then, there exists a constant $c=c(n,p)>0$ such that, for every balls $B(x_0,ar)\subset B(x_0,r)\subset \Omega'$, 
\begin{align*}
\|\nabla u\|_{L^\infty(B(x_0,ar))}&\leq c\|f\|_{L^p(B(x_0,r))}+c((1-a)r)^{-(n+1)}\|u\|_{L^1(B(x_0,r))}\\
\|u\|_{L^\infty(B(x_0,ar))}&\leq c r^{2-n/p}\|f\|_{L^p(B(x_0,r))}+c((1-a)r)^{-n}\|u\|_{L^1(B(x_0,r))}.
\end{align*}
\end{prop}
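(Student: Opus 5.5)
The statement just above, Proposition \ref{weakLapl}, is a cited local estimate for the Poisson equation. I would prove it by representing $u$ locally through the Newtonian potential plus a harmonic remainder.

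Fix $B(x_0,r)\subset\Omega'$ and $a\in\,]0,1[$. Let $\Gamma$ be the fundamental solution of $-\Delta$ in $\R^n$, and set
$$N(x):=\int_{B(x_0,r)}\Gamma(x-y)f(y)\,dy.$$
Then $-\Delta N=f$ in the distributional sense on $B(x_0,r)$. Hence $h:=u-N$ is weakly harmonic, and by Weyl's lemma $h$ is smooth and classically harmonic in $B(x_0,r)$.

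\textbf{Step 1: the potential $N$.} Since $|\Gamma(z)|\lesssim|z|^{2-n}$ (logarithmic if $n=2$) and $|\nabla\Gamma(z)|\lesssim|z|^{1-n}$, H\"older's inequality with exponent $p'$ gives
$$\|N\|_{L^\infty(B(x_0,r))}\le c\,r^{2-n/p}\|f\|_{L^p(B(x_0,r))},\qquad \|\nabla N\|_{L^\infty(B(x_0,r))}\le c\,r^{1-n/p}\|f\|_{L^p(B(x_0,r))}.$$
The exponents are finite precisely because $p>n$: the integrability condition is $(1-n)p'>-n$, i.e.\ $p>n$. Differentiating under the integral sign is justified because $\nabla\Gamma\in L^{p'}_{loc}$. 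The same bounds also give $\|N\|_{L^1(B(x_0,r))}\le c\,r^{n}r^{2-n/p}\|f\|_{L^p}$.

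\textbf{Step 2: the harmonic part $h$.} For $x\in B(x_0,ar)$ the ball $B(x,(1-a)r)$ lies in $B(x_0,r)$. The mean value property gives
$$|h(x)|\le c((1-a)r)^{-n}\|h\|_{L^1(B(x_0,r))},$$
and the gradient estimate for harmonic functions (the mean value property applied to $\de_i h$ on a ball of radius $(1-a)r/2$, then the divergence theorem) gives
$$|\nabla h(x)|\le c((1-a)r)^{-(n+1)}\|h\|_{L^1(B(x_0,r))}.$$
Since $\|h\|_{L^1}\le\|u\|_{L^1}+\|N\|_{L^1}$, it remains to absorb the extra term $\|N\|_{L^1}$.

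\textbf{Step 3: the main obstacle.} The contribution of $\|N\|_{L^1}$ produces $((1-a)r)^{-n}r^{n+2-n/p}\|f\|_{L^p}$, and in the gradient estimate $((1-a)r)^{-(n+1)}r^{n+2-n/p}\|f\|_{L^p}$. These carry factors of $(1-a)^{-1}$ and powers of $r$, whereas the stated bound has a constant depending only on $n,p$ in front of $\|f\|_{L^p}$. I would first try to avoid the issue by recentering: replace $N$ by the potential over the smaller ball $B(x,(1-a)r)$ and estimate $h$ there. If that does not remove the loss, I would accept constants depending additionally on $a$ and on an upper bound for $r$, i.e.\ on $\operatorname{diam}\Omega'$. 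In the gradient estimate the factor $r^{1-n/p}$ must also be absorbed into $c$ in this way. Either way this bookkeeping of constants is the delicate point, and I would follow the normalization used in \cite{DiBenedettoGianazza23}.
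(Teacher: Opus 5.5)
The paper does not prove this proposition; it quotes it from DiBenedetto--Gianazza, so there is no route of the paper's to compare with. Your splitting into a Newtonian potential plus a harmonic remainder is the standard argument. It does give the result, once Step~3 is actually carried out and the case $n=2$ in Step~1 is repaired.

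\textbf{Recentering works, so the fallback is unnecessary.} For $x\in B(x_0,ar)$ put $\rho:=(1-a)r$. Let $N_x$ be the potential of $f\chi_{B(x,\rho)}$, set $h_x:=u-N_x$ (harmonic in $B(x,\rho)$), and evaluate at the centre $x$:
\begin{align*}
|u(x)|&\le c\rho^{2-n/p}\|f\|_{L^p(B(x,\rho))}+c\rho^{-n}\big(\|u\|_{L^1(B(x,\rho))}+c\rho^{n+2-n/p}\|f\|_{L^p(B(x,\rho))}\big),\\
|\nabla u(x)|&\le c\rho^{1-n/p}\|f\|_{L^p(B(x,\rho))}+c\rho^{-(n+1)}\big(\|u\|_{L^1(B(x,\rho))}+c\rho^{n+2-n/p}\|f\|_{L^p(B(x,\rho))}\big).
\end{align*}
The contribution of $\|N_x\|_{L^1}$ now scales exactly like the potential term, so no power of $(1-a)^{-1}$ multiplies $\|f\|$. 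Since $2-n/p>0$ and $1-n/p>0$, you may replace $\rho$ by $r$ in front of $\|f\|$, and $B(x,\rho)\subset B(x_0,r)$. This gives the $L^\infty$ estimate exactly as stated, with $c=c(n,p)$. It gives the gradient estimate with first term $c\,r^{1-n/p}\|f\|_{L^p(B(x_0,r))}$.

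\textbf{The factor $r^{1-n/p}$ cannot be removed.} You were right to flag it; it is not an artifact of the method. The printed gradient bound is not scale invariant and is false with $c=c(n,p)$. Take $\Omega'=B(0,2R)$, $x_0=0$, $r=R$, $a=\frac12$ and $u(x)=\kappa R\sin(x_1/(\kappa R))$, so $f=(\kappa R)^{-1}\sin(x_1/(\kappa R))$. Then $|\nabla u(0)|=1$, while the right-hand side is at most $C(n,p)(\kappa^{-1}R^{n/p-1}+\kappa)$. This is $<1$ once $\kappa$ is small and then $R$ is large. So the correct form carries $r^{1-n/p}$, or a constant depending also on $\operatorname{diam}\Omega'$, as you say. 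This does not affect the paper, which uses only the $L^\infty$ estimate (in the proofs of Proposition~\ref{Regularityomega} and of the regularity theorem for $\sigma\equiv0$).

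\textbf{The case $n=2$.} Your remark ``logarithmic if $n=2$'' hides a real defect in Step~1. With $\Gamma(z)=-\frac{1}{2\pi}\log|z|$ the bound $\|N\|_{L^\infty}\le c\,r^{2-n/p}\|f\|_{L^p}$ is false. For $f\equiv1$ one gets $N(x_0)=-\frac{r^2}{2}\log r+\frac{r^2}{4}$, whereas $r^{2-2/p}\|f\|_{L^p(B(x_0,r))}=\pi^{1/p}r^2$; the ratio blows up like $|\log r|$.

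The fix is to use $\Gamma_\rho(z):=-\frac{1}{2\pi}\log\frac{|z|}{2\rho}$. It differs from $\Gamma$ by a constant, so $-\Delta N_x=f$ in $B(x,\rho)$ still holds. It is nonnegative for $|z|<2\rho$ and satisfies $\int_{B(0,2\rho)}|\Gamma_\rho|^{p'}\,dz=c\rho^2$, which restores the scale-invariant bounds on $N_x$ and $\|N_x\|_{L^1}$ used above. The gradient kernel needs no change.
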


\begin{prop}[see Theorem $18.1$ in Section $9$ in \cite{DiBenedettoGianazza23}]\label{holderLapl}
Let $\Omega'\subseteq\R^n$ be an open set with the uniform $C^d$-regularity property with $d\geq 1$, and let $A=(a_{ij})$ be a $n\times n$ symmetric matrix with $a_{ij}\in L^\infty(\Omega)$, satisfying the elliptic condition
$$\lambda |\xi|^2\leq \sum_{i,j}a_{ij}(x)\xi_i \xi_j\leq \Lambda |\xi|^2,$$
for every $\xi\in\R^n, x\in\Omega$ for some $0<\lambda\leq \Lambda$.
Let $u\in W^{1,2}_{loc}(\Omega')$ be a local weak solution of 
$$-\div(A \nabla u)=f\in L^{\frac{n+\rho}{2}}(\Omega')$$
for some $\rho>0$. Then $u$ is locally $\alpha$-H\"older continuous in $\Omega'$ for some $\alpha=\alpha(n,f,A)\in\,]0,1[$, and, for every compact set $K\subset \Omega'$, there exist a constant $c=c(n,f,A,d(K,\de\Omega'))>0$ such that $\|u\|_{C^{0,\alpha}(K)}\leq c.$
\end{prop}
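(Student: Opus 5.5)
The plan is to follow the classical De Giorgi scheme, which works directly with the weak formulation and uses only the ellipticity bounds $\lambda,\Lambda$ and the integrability of $f$. Fix a compact $K\subset\Omega'$ and let $R_0:=\tfrac12 d(K,\de\Omega')$. For a ball $B_{R}(x_0)$ with $x_0\in K$, $R\le R_0$, I will test the equation with $\varphi=(u-k)_\pm\zeta^2$, where $\zeta$ is a cutoff between $B_{\rho}$ and $B_{R}$. Using $\lambda|\xi|^2\le\langle A\xi,\xi\rangle\le\Lambda|\xi|^2$ and H\"older's inequality on the term $\int f\varphi$, with exponent $q=\frac{n+\rho}{2}>\frac n2$, I aim for the energy (Caccioppoli) inequality
\begin{equation*}
\int_{B_\rho}|\nabla (u-k)_\pm|^2\le \frac{c(\lambda,\Lambda)}{(R-\rho)^2}\int_{B_R}(u-k)_\pm^2+c\,\|f\|_{L^{q}(B_R)}^{2}\,|\{(u-k)_\pm>0\}\cap B_R|^{1-\frac1{q}+\frac{2}{n}-\frac{2}{n}\cdot 0}.
\end{equation*}
The exact exponent is to be fixed by Sobolev's inequality; the only point that matters is that it exceeds $1-\frac2n$, and this is exactly where $\rho>0$ enters. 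In other words, $u$ belongs to the De Giorgi class $DG^\pm$ with a forcing term scaling like $\|f\|_{L^q}R^{2-n/q}$, and $2-n/q=\frac{2\rho}{n+\rho}>0$.

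Next I will derive local boundedness from the energy inequality. Take radii $R_j=\tfrac R2+\tfrac R{2^{j+1}}$ and levels $k_j=k+M(1-2^{-j})$, and combine the Sobolev embedding with the Caccioppoli bound. This gives a recursive inequality $Y_{j+1}\le C b^j Y_j^{1+\epsilon}$ for the normalized quantities $Y_j=|B_{R_j}|^{-1}\int_{B_{R_j}}(u-k_j)_+^2$. The fast geometric convergence lemma then yields
\begin{equation*}
\sup_{B_{R/2}}u\le k+c\Big(|B_R|^{-1}\int_{B_R}(u-k)_+^2\Big)^{1/2}+c\,R^{2-n/q}\|f\|_{L^q}.
\end{equation*}
The same argument applied to $-u$ gives the lower bound.

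The core step, and the one I expect to be the main obstacle, is the reduction of oscillation. Let $\mu^\pm=\sup/\inf_{B_R}u$ and $\omega_R=\mu^+-\mu^-$. One of the sets $\{u>\mu^+-\omega_R/2\}$ or $\{u<\mu^-+\omega_R/2\}$ occupies at most half of $B_{R/2}$; say the first. I will use De Giorgi's isoperimetric lemma, $(l-k)|\{u<k\}\cap B|\,|\{u>l\}\cap B|\le cR^{n+1}\int_{\{k<u<l\}\cap B}|\nabla u|$, together with the energy inequality at the levels $\mu^+-\omega_R/2^{s}$. This shows that the measure of $\{u>\mu^+-\omega_R/2^{s}\}$ becomes as small as desired for $s$ large, depending only on $n,\lambda,\Lambda$. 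Feeding this smallness into the $L^\infty$ iteration then gives
\begin{equation*}
\omega_{R/4}\le \eta\,\omega_R+cR^{2-n/q}\|f\|_{L^q}
\end{equation*}
with $\eta<1$. The delicate part is tracking the constants so that $\eta$ and $s$ are independent of $R$ and of $u$, while the $f$-term is only carried as an additive error.

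Finally I will iterate this oscillation estimate over the radii $4^{-m}R$, and use the standard lemma on sequences $\phi(r)\le\eta\phi(4r)+Cr^{\beta}$. This gives $\operatorname{osc}_{B_r(x_0)}u\le c\,(r/R_0)^{\alpha}\big(\|u\|_{L^\infty(B_{R_0})}+\|f\|_{L^q}\big)$ with $\alpha=\min\{\log_4(1/\eta),\tfrac{2\rho}{n+\rho}\}/2$, for every $x_0\in K$. The $L^\infty$ norm is controlled by the local boundedness step, in terms of $\|u\|_{L^2}$ on a slightly larger compact set. A standard covering of $K$ then converts the uniform oscillation decay into the bound $\|u\|_{C^{0,\alpha}(K)}\le c$, with $c$ depending on $n,f,A,d(K,\de\Omega')$ and on the local $L^2$ norm of $u$, as in the statement.
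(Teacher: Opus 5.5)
Your argument is correct, and it is the De Giorgi-class proof that underlies Theorem 18.1 of DiBenedetto--Gianazza: a Caccioppoli inequality whose forcing term scales like $R^{2-n/q}\|f\|_{L^q}$, local boundedness by fast geometric convergence, reduction of oscillation via the isoperimetric lemma, then iteration and covering; the paper gives no proof of its own and only cites that theorem. Two small corrections: for $n\ge 3$ the exponent on $|\{(u-k)_\pm>0\}\cap B_R|$ in your energy inequality should be $1-\frac{2}{q}+\frac{2}{n}$, which exceeds $1-\frac{2}{n}$ exactly when $q>\frac{n}{2}$ and matches the $R^{2-n/q}$ scaling you use later; and the Hölder constant must depend on a local $L^2$ norm of $u$, as your proof gives, whereas the statement as printed omits this dependence (it fails already for constant solutions with $f=0$), so your formulation is the correct one.
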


\bigskip

From now on, we set
$$\Lambda_{\e,\sigma}:=\left(1+\frac{\|\sigma\|_{L^\infty(\Omega)}}{\e}\right).$$

We are now in position to prove the following regularity results.

\begin{prop}\label{Regularityomega}
Assume Hypothesis \ref{Hgsigmae}.
Let $V(t,x)=\omega(x)+u(t,x)$ be the unique weak solution of \eqref{eq.BVPIC} as in Definition \ref{solutionBVP1}. We have the following:
\begin{enumerate}
\item $\omega$ is locally H\"older continuous in $\Omega$, and, for every compact set $K\subset \Omega$, there exist a constant $c=c(n,d(K,\de\Omega),\sigma,\e)>0$ and $\alpha=\alpha(n,\sigma,\e)\in\,]0,1[$ such that
$$\|\omega\|_{C^{0,\alpha}(K)}\leq c.$$
\item if $\Omega'\subseteq\Omega$ is an open bounded set such that $\sigma\equiv\sigma_1\geq 0$ is constant in $\Omega'$, then $\omega\in C^\infty(\Omega')$. Moreover, for every $m\in\N, \gamma\in [0,1[$ and every compact set $K\subset\Omega'$, there exists a constant $C=C(n,m,\Omega,K)>0$ such that
\begin{equation}\label{stimaomega}
\|\omega\|_{C^{m,\gamma}(K)}\leq C\,\Lambda_{\e,\sigma}\|\tilde g\|_{H^{1}(\Omega)}.
\end{equation}
\end{enumerate}
\end{prop}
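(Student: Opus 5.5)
For part (1) the plan is to apply Proposition \ref{holderLapl} directly to the equation satisfied by $\omega$. By Definition \ref{weakomega}, $\omega\in H^1(\Omega)$ satisfies $\int_\Omega A\langle\nabla\omega,\nabla\varphi\rangle\,dx=0$ for all $\varphi\in X$, with $A=1+\sigma/\e$. Since $H^1_0(\Omega)\subseteq X$ (functions with zero trace on $\de\Omega$ have zero trace on $B_D$), this holds in particular for all $\varphi\in C_0^\infty(\Omega)$. Hence $\omega$ is a local weak solution of $-\div(A\nabla\omega)=0$ in $\Omega$. The coefficient matrix $A(x)\,\mathrm{Id}$ is symmetric and satisfies the ellipticity bounds with $\lambda=1$ and $\Lambda=\Lambda_{\e,\sigma}$. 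The right-hand side $f=0$ trivially lies in $L^{(n+\rho)/2}(\Omega)$. Proposition \ref{holderLapl} (with $\Omega'=\Omega$, which has the required boundary regularity) then gives local $\alpha$-H\"older continuity with $\alpha=\alpha(n,\sigma,\e)$ and a bound on every compact $K$ by a constant depending on $n$, $d(K,\de\Omega)$, $\sigma$ and $\e$. The only dependence one has to track is that the De Giorgi constant also depends on $\|\omega\|_{L^2}$, which by \eqref{omegaH1} is controlled by $\tilde g$; I would absorb this into $c$.

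For part (2), on $\Omega'$ we have $A\equiv 1+\sigma_1/\e$, a positive constant. Testing with $\varphi\in C_0^\infty(\Omega')$, extended by zero so that it lies in $X$, gives $(1+\sigma_1/\e)\int_{\Omega'}\nabla\omega\cdot\nabla\varphi=0$. Hence $\omega$ is weakly harmonic in $\Omega'$. By the last statement of Proposition \ref{Hmregularity} with $f=0\in C^\infty$ (hypoellipticity, or Weyl's lemma), $\omega\in C^\infty(\Omega')$.

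For the quantitative estimate \eqref{stimaomega}, fix $K\subset\Omega'$ compact and choose intermediate compacts $K\subset K'\subset K''\subset\Omega'$. Since $\omega$ is smooth and harmonic, Proposition \ref{strongLapl} with $f=0$ gives $\|\omega\|_{C^{m,\gamma}(K)}\le c\,\|\omega\|_{L^\infty(K')}$. Then Proposition \ref{weakLapl} with $f=0$, applied on finitely many balls covering $K'$ with doubled balls inside $K''$, gives $\|\omega\|_{L^\infty(K')}\le c\,\|\omega\|_{L^1(K'')}\le c|\Omega|^{1/2}\|\omega\|_{L^2(\Omega)}$. Finally, \eqref{omegaH1} yields $\|\omega\|_{L^2}\le 2\max\{1,c_\Omega\}\Lambda_{\e,\sigma}\|\tilde g\|_{H^1(\Omega)}$, and $c_\Omega$ is absorbed into $C=C(n,m,\Omega,K)$. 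Alternatively, one could iterate the $H^{m+2}$ interior estimate of Proposition \ref{Hmregularity} and use Sobolev embedding.

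The main obstacle I expect is bookkeeping rather than substance. First, one must check that test functions compactly supported in $\Omega$ (or $\Omega'$) really belong to $X$. Second, the constant must not acquire hidden dependence on $\sigma_1$ beyond the factor $\Lambda_{\e,\sigma}$. This is ensured by dividing the equation by the constant $1+\sigma_1/\e$ before applying the Laplacian estimates, so that $\sigma$ enters only through the global bound \eqref{omegaH1}.
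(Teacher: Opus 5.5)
Your proposal is correct and follows the paper's proof step by step. For part (1) you apply Proposition~\ref{holderLapl} to $-\div((1+\sigma/\e)\nabla\omega)=0$ in $\mathcal{D}'(\Omega)$. For part (2) you use weak harmonicity in $\Omega'$, hypoellipticity, the interior estimates of Propositions~\ref{strongLapl} and~\ref{weakLapl} on nested balls with a covering argument, and finally $\|\omega\|_{L^1}\lesssim\|\omega\|_{H^1(\Omega)}$ together with \eqref{omegaH1}.

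Your remark that the De Giorgi constant in part (1) must depend on $\|\omega\|_{L^2}$, and hence on $\tilde g$, is correct. It sharpens the dependence $c(n,d(K,\de\Omega),\sigma,\e)$ listed in the statement, which cannot literally be independent of $\tilde g$ because $\omega$ depends linearly on it.
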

\begin{proof}
\underline{Proof of $1)$}: By Proposition \ref{solPBstatic}, we know that $\omega\in H^1(\Omega)$ satisfies \eqref{equazione1}. So, in particular, since $C_0^\infty(\Omega)\subseteq X$, then $\omega$ satisfies
$$-\div((a+1)\nabla\omega)=0\quad\text{in $\mathcal{D}'(\Omega)$}.$$
Since $a\in L^\infty(\Omega)$ and $a(x)+1\geq 1$ for every $x\in\Omega$ (recall that $\sigma$ is non-negative), the above problem is uniformly elliptic with $L^\infty$ coefficients. By Proposition \ref{holderLapl}, we obtain that $\omega$ is locally H\"older in $\Omega$ with the estimate as in the statement.

\medskip
\underline{Proof of $2)$}: 
If $\sigma\equiv\sigma_1\geq 0$ is constant in $\Omega'$, then $\omega$ satisfies $-\Delta \omega=0$ in $\mathcal{D}'(\Omega')$. By the hypoellipticity of $-\Delta$, we have $\omega\in C^\infty(\Omega')$. 
In order to prove \eqref{stimaomega}, let $m\in\N, \gamma\in [0,1[\,, 0<a<a'<1,r>0, x_0\in\Omega$ such that $B(x_0,ar)\subset B(x_0,a'r)\subset B(x_0,r)\Subset\Omega$. Combining Proposition \ref{strongLapl} and Proposition \ref{weakLapl}, we have
\begin{align*}
\|\omega\|_{C^{m+2,\gamma}(B(x_0,ar))}&\leq c_{m,a'-a}\|\omega\|_{L^\infty(B(x_0,a'r))}\\
&\leq \tilde c_{n,m,a'-a}((1-a')r)^{-n}\|\omega\|_{L^1(B(x_0,r))}.
\end{align*}
Using a covering argument, we obtain that, for every compact set $K\subset\Omega$,
\begin{align*}
\|\omega\|_{C^{m+2,\gamma}(K)}&\leq c_{n,m,K}\|\omega\|_{L^1(K')}\leq \tilde c_{n,m,\Omega,K}\|\omega\|_{H^1(\Omega)}\\
&\leq C_{n,m,\Omega,K}(1+\|\sigma\|_{L^\infty(\Omega)}/\e)\|\tilde g\|_{H^{1}(\Omega)}, 
\end{align*}
where $K':=\{x\in\Omega'\,\,|\,\,d(x,K)\leq d(K,\de\Omega')/2\}$, and, in the last inequality, we used the estimate in Proposition \ref{solPBstatic}.
\end{proof}

\medskip
\begin{teo}
Assume Hypothesis \ref{Hgsigmae}.
Let $V(t,x)=\omega(x)+u(t,x)$ be the unique weak solution of \eqref{eq.BVPIC} as in Definition \ref{solutionBVP1}. Then, if $\Omega_0\subseteq \Omega$ is an open set such that $\sigma\equiv 0$ is constant in $\Omega_0$, then
\begin{enumerate}[(i)]
\item $u'\in C^0([0,T];C^{\infty}(\Omega_0))$. Moreover, for every $m\in\N, \gamma\in [0,1[$ and every compact set $K\subset \Omega_0$, there exists a constant $c=c(n,m,\Omega,K)>0$ such that
\begin{equation}\label{stimauprimo}
\|u'\|_{C^0([0,T];C^{m,\gamma}(K))}\leq c\, \Lambda_{\e,\sigma}^2\|\tilde g\|_{H^1(\Omega)}.
\end{equation}
\item if $\tilde g\in H^k_{loc}(\Omega_0)\cap H^2(\Omega)$ for some $k\geq 3$, then 
$$u\in C^1([0,T];H^k_{loc}(\Omega_0))\cap W^{1,\infty}(0,T; H^2(\Omega_0)).$$
Moreover, for every compact set $K\subset \Omega_0$, there exists a constant $c_1=c_1(n,k,\Omega,K)>0$ such that
\begin{gather}
\begin{split}
\|u\|_{C^{1}([0,T];H^{k}(K))}&\leq c_1\,\max\{1,T\}\Lambda_{\e,\sigma}^2\Big(\|\tilde g\|_{H^k(K)}+\|\tilde g\|_{H^1(\Omega)}\Big).
\end{split}
\end{gather}
\item if $\tilde g\in C^{k,\alpha}_{loc}(\Omega_0)\cap H^2(\Omega)$ for some $k\in\N, \alpha\in [0,1[$, then 
$$u\in C^1([0,T];C^{k,\alpha}_{loc}(\Omega_0)\cap H^2_{loc}(\Omega_0))\cap W^{1,\infty}(0,T; H^2(\Omega_0)).$$
Moreover, for every compact set $K\subset \Omega_0$, there exists a constant $c_2=c_2(n,k,\Omega,K)>0$ such that
\begin{gather}\label{ahnonlosoio}
\begin{split}
\|u\|_{C^{1}([0,T];C^{k,\alpha}(K))}&\leq c_2\,\max\{1,T\}\Lambda_{\e,\sigma}^2\Big(\|\tilde g\|_{C^{k,\alpha}(K)}+\|\tilde g\|_{H^1(\Omega)}\Big),\\
\|u\|_{C^{1}([0,T];H^{2}(K))}&\leq c_2\,\max\{1,T\}\Lambda_{\e,\sigma}^2\Big(\|\tilde g\|_{H^2(K)}+\|\tilde g\|_{H^1(\Omega)}\Big).
\end{split}
\end{gather}
\end{enumerate}
\end{teo}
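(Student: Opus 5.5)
The plan is to localize the weak equation for $w$ to $\Omega_0$, where $\sigma\equiv 0$, and read off an elliptic equation for $u'$ there. Testing the identity in Definition \ref{weakudefi} with $\varphi\in C_0^\infty(\Omega_0)\subseteq X$, the term containing $\sigma$ drops out because $a=\sigma/\e=0$ on $\Omega_0$. Since $w'=-\Delta u'$ in $X^*$ (the derivative commutes with $(-\Delta_{ND})^{-1}$, as shown in the proof of Proposition \ref{solPBtime}), this gives
\begin{equation*}
(\nabla u'(t,\cdot),\nabla\varphi)_0=(\nabla\omega,\nabla\varphi)_0\quad\text{for every }\varphi\in C_0^\infty(\Omega_0),\ t\in[0,T].
\end{equation*}
Hence $-\Delta(u'(t)-\omega)=0$ in $\mathcal{D}'(\Omega_0)$. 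By Proposition \ref{Regularityomega}(2) with $\sigma_1=0$, $\omega$ is harmonic in $\Omega_0$, so $u'(t)$ itself is harmonic in $\Omega_0$ for every $t$.

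For (i), hypoellipticity of $\Delta$ gives $u'(t)\in C^\infty(\Omega_0)$. I will then apply Proposition \ref{strongLapl} with $f=0$, followed by Proposition \ref{weakLapl}, and a covering argument exactly as in the proof of Proposition \ref{Regularityomega}. This yields
\begin{equation*}
\|u'(t)\|_{C^{m,\gamma}(K)}\le c\,\|u'(t)\|_{L^1(K')}\le c\,c_\Omega\|u'(t)\|_X .
\end{equation*}
The right-hand side is bounded by $\Lambda_{\e,\sigma}^2\|\tilde g\|_{1}$ via Proposition \ref{solPBtime}, since $\max\{1,\|\sigma\|_\infty/\e\}\le\Lambda_{\e,\sigma}$. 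Continuity in time comes from applying the same linear estimate to $u'(t)-u'(s)$ together with $u'\in C^0([0,T];\tilde X)$. This proves \eqref{stimauprimo}.

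For (ii) and (iii), I will write $u(t)=u(0)+\int_0^t u'(s)\,ds$ with $u(0)=\tilde g-\omega$. Interior regularity of $u(0)$ on $K$ then follows from that of $\tilde g$ (in $H^k$ or $C^{k,\alpha}$) and of $\omega$, which is smooth on $\Omega_0$ by \eqref{stimaomega}. The integral term is estimated via (i) by $T\sup_t\|u'\|$. Summing the two contributions gives the stated bounds, with factor $\max\{1,T\}\Lambda_{\e,\sigma}^2$, for $\|u\|_{C^1([0,T];H^k(K))}$; the corresponding $C^{k,\alpha}$ and $H^2(K)$ bounds come from the same decomposition, using the Sobolev control of the smooth $u'$ on compacts.

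The delicate step is the claim $W^{1,\infty}(0,T;H^2(\Omega_0))$, which is a bound up to the boundary of $\Omega_0$ rather than on compacts. Since $u'(t)$ is only known to be harmonic in $\Omega_0$ with $H^1$ control, I do not see a direct interior-to-boundary estimate. My first attempt would be to interpret it as $H^2$ on compact subsets, i.e.\ $H^2_{loc}$ uniformly in $t$. If genuine regularity up to $\partial\Omega_0$ is needed, I would try to use $u\in C^1([0,T];\tilde X)$ with $\tilde g\in H^2(\Omega)$, giving $u(0)\in H^2(\Omega_0)$, together with an $H^2$ estimate for $u'$ near $\partial\Omega_0$. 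That estimate presumably requires regularity of $\partial\Omega_0$ and of the transmission conditions there, and I expect this step to be the main obstacle.
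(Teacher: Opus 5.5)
Your argument for (i)--(iii) follows the paper's proof. You test the equation of Definition \ref{weakudefi} with $\varphi\in C_0^\infty(\Omega_0)$, use hypoellipticity and Propositions \ref{strongLapl}--\ref{weakLapl} with a covering to control $u'(t)$ on compacts by $\|u'(t)\|_X$, get time continuity from the same estimate applied to $u'(t)-u'(s)$, and integrate $u(t)=u(0)+\int_0^t u'(s)\,ds$. Your observation that $\Delta\omega=0$ in $\Omega_0$, so that $u'(t)$ is itself harmonic there, is a slight streamlining. The paper instead carries $-\Delta\omega$ as a source term through Proposition \ref{strongLapl}, although it vanishes in $\Omega_0$.

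On the step you flag, the paper does no better than you. It justifies $u(0)=\tilde g-\omega\in H^2(\Omega_0)$ by Proposition \ref{Regularityomega}, but that proposition only controls $\omega$ on compact subsets, and every $H^2$ bound the paper proves is on a compact $K\subset\Omega_0$. So what the paper and your proposal both establish is
$\sup_{t\in[0,T]}\big(\|u(t)\|_{H^2(K)}+\|u'(t)\|_{H^2(K)}\big)<\infty$ for each compact $K\subset\Omega_0$. That is, $u$ is $W^{1,\infty}$ in time with values in $H^2_{loc}(\Omega_0)$.

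You should not try to close the up-to-the-boundary version with these tools, because it fails in general. Suppose $\sigma$ vanishes near a point of $\partial\Omega$ where $B_D$ meets $B_N$ (Hypothesis \ref{Hgsigmae} allows this), and $\Omega_0$ reaches that point. Then $\tilde\omega$ generically contains the mixed-boundary singular term $r^{1/2}\sin(\phi/2)$, with $r$ the distance to the junction. This term is not in $H^2$, so $u(0)=\tilde g-\omega\notin H^2(\Omega_0)$. With the $H^2_{loc}$ reading of that claim, your proof is complete.
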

\begin{proof}
\noindent{\underline{Proof of $(i)$}}: Let $\Omega_0\subseteq \Omega$ be an open set such that $\sigma\equiv 0$ in $\Omega_0$. Then, by Proposition \ref{solPBtime}, we have
$$\langle w'(t,\cdot)|\varphi\rangle_{X^*,X}=\int_{\Omega_0}\langle\nabla\omega,\nabla\varphi\rangle\,dx,\quad\text{for every $\varphi\in C_0^\infty(\Omega_0)\,(\subseteq X)$},\,t\in [0,T]$$
i.e.\,$w'(t,\cdot)=-\Delta\omega$ in $\mathcal{D}'(\Omega_0)$ for every $t\in [0,T]$. Since $u'(t,\cdot)=(-\Delta_{ND})^{-1}w'(t,\cdot)$, we have
$$-\Delta u'(t,\cdot)= w'(t,\cdot)=-\Delta\omega\quad\text{in $\mathcal{D}'(\Omega_0)$\,\,\,\,for every $t\in [0,T]$}.$$
By Proposition \ref{Regularityomega}, $-\Delta\omega\in C^\infty(\Omega_0)$. Thus, by the hypoellipticity of $-\Delta$, we have $u'(t,\cdot)\in C^\infty(\Omega_0)$ for every $t\in [0,T]$.

Moreover, by Proposition \ref{strongLapl}, Proposition \ref{Regularityomega} and by the estimate in Proposition \ref{solPBtime}, for every $m\in\N, \gamma\in [0,1[\,, 0<a<a'<1,r>0, x_0\in\Omega$ such that $B(x_0,ar)\subset B(x_0,a'r)\subset B(x_0,r)\Subset\Omega_0$, 
\begin{align*}
&\|u'(t,\cdot)\|_{C^{m+2, \gamma}(B(x_0,ar))}\leq c_{m,a'-a}\big(\|-\Delta\omega\|_{C^{m, \gamma}(B(x_0,a'r))}\\
&\quad\quad\quad\quad\quad\quad\quad\quad\quad\quad+\|u'(t,\cdot)\|_{L^\infty(B(x_0,a'r))}\big)\\
&\quad\quad\quad\quad\leq \tilde c_{n,m,a'-a}\big(\|\omega\|_{C^{m+2, \gamma}(B(x_0,a'r))}+r^{2}\|-\Delta\omega\|_{L^{\infty}(B(x_0,r))}\\
&\quad\quad\quad\quad\quad\quad\quad\quad\quad\quad+((1-a')r)^{-n}\|u'(t,\cdot)\|_{L^1(B(x_0,r))}\big)\\
&\quad\quad\quad\quad\leq C_{n,m,a'-a,r}\Big((1+r^{2})\|\omega\|_{C^{m+2, \gamma}(B(x_0,a'r))}\\
&\quad\quad\quad\quad\quad\quad\quad\quad\quad\quad+((1-a')r)^{-n}\|u'(t,\cdot)\|_{H^1(B(x_0,r))}\Big).
\end{align*}
Applying the estimate for $\omega$ in Proposition \ref{Regularityomega} and the estimate for $u$ in Proposition \ref{solPBtime} to previous computations, we obtain
$$\|u'(t,\cdot)\|_{C^{m+2,\gamma}(B(x_0,ar))}\leq C_{n,m,\Omega,a'-a,r}(1+\|\sigma\|_{L^\infty(\Omega)}/\e)^2\|\tilde g\|_{H^1(\Omega)}.$$
By a covering argument as in Proposition \ref{Regularityomega}, we proved, for every compact set $K\subset \Omega_0$ and every $m\in\N,\gamma\in [0,1[$\,, 
\begin{equation}\label{stimaSUPu}
\|u'(t,\cdot)\|_{C^{m+2,\gamma}(K)}\leq c_{n,m,\Omega,K}(1+\|\sigma\|_{L^\infty(\Omega)}/\e)^2\|\tilde g\|_{H^1(\Omega)}, \quad\text{$\forall\,t\in [0,T]$}.
\end{equation}
We have proved $u'\in L^\infty(0,T;C^\infty(\Omega_0))$, showing the estimate \eqref{stimauprimo}.

We want to prove $u'\in C^0([0,T];C^\infty(\Omega_0))$. We observe 
$$-\Delta(u'(t,\cdot)-u'(s,\cdot))=0\quad\text{in $\mathcal{D}'(\Omega_0)$\quad for every $t,s\in [0,T]$}.$$
Thus, arguing as above, we have, for every $m\in\N,\gamma\in [0,1[$ and $K\subset\Omega_0$ compact set
$$\|u'(t,\cdot)-u'(s,\cdot)\|_{C^{m+2,\gamma}(K)}\leq C_{n,m,\Omega,K}\|u'(t,\cdot)-u'(s,\cdot)\|_{L^1(K')},$$
where $K':=\{x\in\Omega_0\,\,|\,\,d(x,K)\leq d(K,\de\Omega_0)/2\}\subset \Omega_0$. Since $u'\in C^0([0,T];H^1(\Omega))$, we find
$$\|u'(t,\cdot)-u'(s,\cdot)\|_{L^1(K')}\leq C_{\Omega}\|u'(t,\cdot)-u'(s,\cdot)\|_{H^1(\Omega)}\xrightarrow{t\to s} 0,$$
showing that $u'\in C^0([0,T];C^\infty(\Omega_0))$.

\medskip
\noindent{\underline{Proof of $(iii)$}}:
Let $\tilde g\in C^{k,\alpha}_{loc}(\Omega_0)\cap H^2(\Omega)$ for some $k\in\N,\alpha\in [0,1[$. By construction, we have $u(0,\cdot)=-\tilde\omega=\omega-\tilde g$. By Proposition \ref{Regularityomega}, we find $u(0,\cdot)\in C^{k,\alpha}_{loc}(\Omega_0)\cap H^2(\Omega_0)$. By the fundamental theorem of calculus applied to $u$ in $H^1(\Omega)$ (see Theorem $2$ in Section $5.9.2$ in \cite{evans}), we know
\begin{equation}\label{TFCIu}
u(t,\cdot)= -\tilde\omega+\int_0^t u'(s,\cdot)\,ds \quad\text{in $H^1(\Omega)$}.
\end{equation}
Using the regularity of $\tilde\omega$ and $u'(t,\cdot)$, we have $u(t,\cdot)\in C^{k,\alpha}_{loc}(\Omega_0)\cap H^2(\Omega_0)$ for every $t\in [0,T]$. Thus, up to considering modifications on sets of measure zero, \eqref{TFCIu} holds pointwise in $x\in\Omega$. By the estimate \eqref{stimaSUPu} which is uniform in $t\in [0,T]$, for every compact set $K\subset\Omega_0$, we have
\begin{align*}
\|u(t,\cdot)\|_{C^{k,\alpha}(K)}&\leq \|\tilde\omega\|_{C^{k,\alpha}(K)}+\int_0^t\|u'(s,\cdot)\|_{C^{k,\alpha}(K)}\,ds\\
&\leq \|\omega\|_{C^{k,\alpha}(K)}+\|\tilde g\|_{C^{k,\alpha}(K)}+ t\, C_{n,k,\Omega,K}(1+\|\sigma\|_{L^\infty(\Omega)}/\e)^2\|\tilde g\|_{H^1(\Omega)}\\
&\leq \tilde C_{n,k,\Omega,K}\max\{1,t\}(1+\|\sigma\|_{L^\infty(\Omega)}/\e)^2\Big(\|\tilde g\|_{C^{k,\alpha}(K)}+\|\tilde g\|_{H^1(\Omega)}\Big)
\end{align*}
and
\begin{align*}
\|u(t,\cdot)&\|_{H^{2}(K)}\leq \|\tilde\omega\|_{H^{2}(K)}+\int_0^t\|u'(s,\cdot)\|_{H^{2}(K)}\,ds\\
&\leq \|\omega\|_{C^{2}(K)}+\|\tilde g\|_{H^{2}(K)}+\int_0^t\|u'(s,\cdot)\|_{C^{2}(K)}\,ds\\
&\leq c_{n,\Omega,K}\max\{1,t\}(1+\|\sigma\|_{L^\infty(\Omega)}/\e)^2\Big(\|\tilde g\|_{H^2(K)}+\|\tilde g\|_{H^1(\Omega)}\Big).
\end{align*}
We have proved
$u\in W^{1,\infty}(0,T;C^{k,\alpha}_{loc}(\Omega_0)\cap H^2(\Omega_0))$ with the estimates
\begin{gather}\label{stimaW1infty}
\begin{split}
\|u\|_{W^{1,\infty}(0,T;C^{k,\alpha}(K))}&\leq c_{n,k,\Omega,K}\max\{1,T\}\Lambda_{\e,\sigma}^2\Big(\|\tilde g\|_{C^{k,\alpha}(K)}+\|\tilde g\|_{H^1(\Omega)}\Big),\\
\|u\|_{W^{1,\infty}(0,T;H^2(K))}&\leq c_{n,\Omega,K}\max\{1,T\}\Lambda_{\e,\sigma}^2\Big(\|\tilde g\|_{H^2(K)}+\|\tilde g\|_{H^1(\Omega)}\Big),
\end{split}
\end{gather}
for every compact set $K\subset\Omega_0$, showing also the estimate \eqref{ahnonlosoio}.
Now, using again \eqref{TFCIu}, we have
$$u(t,x)-u(s,x)=\int_s^t u'(\tau,x)\,d\tau\quad\text{for every $x\in\Omega$}.$$
Hence, by \eqref{stimauprimo},
\begin{align*}
\|u(t,\cdot)-u(s,\cdot)\|_{C^{k,\alpha}(K)}&\leq |t-s|\,\sup_{t\in [0,T]}\|u'(t,\cdot)\|_{C^{k,\alpha}(K)}\\
&\leq |t-s|\,c_{n,k,\Omega,K}(1+\|\sigma\|_{L^\infty(\Omega)}/\e)^2\|\tilde g\|_{H^1(\Omega)}\\
&=c_{n,k,\Omega,K,\tilde g,\sigma,\e}|t-s|\xrightarrow{t\to s} 0,
\end{align*}
and
\begin{align*}
\|u(t,\cdot)-u(s,\cdot)\|_{H^2(K)}&\leq |t-s|\,\sup_{t\in [0,T]}\|u'(t,\cdot)\|_{H^2(K)}\\
&\leq |t-s|\,\sup_{t\in [0,T]}\|u'(t,\cdot)\|_{C^2(K)}\\
&\leq |t-s|\,c_{n,\Omega,K}(1+\|\sigma\|_{L^\infty(\Omega)}/\e)^2\|\tilde g\|_{H^1(\Omega)}\\
&=c_{n,\Omega,K,\tilde g,\sigma,\e}|t-s|\xrightarrow{t\to s} 0,
\end{align*}
showing that $u\in C^1([0,T];C^{k,\alpha}_{loc}(\Omega_0)\cap H^2_{loc}(\Omega_0))$.

\medskip
\noindent{\underline{Proof of $(ii)$}}:
Let $\tilde g\in H^k_{loc}(\Omega_0)\cap H^2(\Omega)$ for some $k\geq 3$. The proof of $(ii)$ can be made analogously as in the previous point to show that $u\in C^1([0,T]; H^2_{loc}(\Omega_0))$.
\end{proof}
\medskip

\begin{teo}
Assume Hypothesis \ref{Hgsigmae}.
Let $V(t,x)=\omega(x)+u(t,x)$ be the unique weak solution of \eqref{eq.BVPIC} as in Definition \ref{solutionBVP1}. Then, if $\Omega_1\subseteq \Omega$ is an open set such that $\sigma\equiv \sigma_1>0$ is constant in $\Omega_1$, then
\begin{enumerate}[(i)]
\item if $\tilde g\in H^k_{loc}(\Omega_1)\cap H^2(\Omega)$ for some $k\geq 3$, then 
$$u\in C^1([0,1];H^k_{loc}(\Omega_1))\cap W^{1,\infty}(0,T; H^2(\Omega_1)).$$ 
Moreover, for every compact set $K\subset \Omega_1$, there exists a constant $c_1=c_1(n,k,\Omega,K)>0$ such that
\begin{gather}\label{stimauC1secondo}
\begin{split}
\|u\|_{C^{1}([0,T];H^{k}(K))}&\leq c_1\max\{1,T\}\left(1+\frac{\sigma_1}{\e}\right)\Lambda_{\e,\sigma}^2\Big(\|\tilde g\|_{H^k(K)}+\|\tilde g\|_{H^1(\Omega)}\Big).
\end{split}
\end{gather}
\item if $\tilde g\in C^{k,\alpha}_{loc}(\Omega_1)\cap H^2(\Omega)$ for some $k\in\N, \alpha\in [0,1[$, then 
$$u\in C^1([0,1];C^{k,\alpha}_{loc}(\Omega_1)\cap H^2_{loc}(\Omega_1))\cap W^{1,\infty}(0,T; H^2(\Omega_1))$$ 
Moreover, for every compact set $K\subset \Omega_1$, there exists a constant $c_2=c_2(n,k,\Omega,K)>0$ such that
\begin{gather}\label{stimauC1}
\begin{split}
\|u\|_{C^{1}([0,T];C^{k,\alpha}(K))}&\leq c_2\,\max\{1,T\}\left(1+\frac{\sigma_1}{\e}\right)\Lambda_{\e,\sigma}^2\Big(\|\tilde g\|_{C^{k,\alpha}(K)}+\|\tilde g\|_{H^1(\Omega)}\Big),\\
\|u\|_{C^{1}([0,T];H^{2}(K))}&\leq c_2\,\max\{1,T\}\left(1+\frac{\sigma_1}{\e}\right)\Lambda_{\e,\sigma}^2\Big(\|\tilde g\|_{H^2(K)}+\|\tilde g\|_{H^1(\Omega)}\Big).
\end{split}
\end{gather}
\end{enumerate}
\end{teo}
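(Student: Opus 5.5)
The plan follows the $\sigma\equiv0$ theorem, but the equation for $u'$ on $\Omega_1$ now couples to $u$ itself. Testing the weak formulation in Definition \ref{weakudefi} with $\varphi\in C_0^\infty(\Omega_1)\subseteq X$ and writing $a_1:=\sigma_1/\e$ gives
$$-\Delta u'(t,\cdot)-a_1\Delta u(t,\cdot)=-\Delta\omega=0\quad\text{in }\mathcal{D}'(\Omega_1),\ \text{for every } t\in[0,T].$$
Here $\Delta\omega=0$ in $\Omega_1$ by Proposition \ref{Regularityomega}(2), since $\sigma$ is constant there. Set $v(t):=u(t)-u(0)$ and $h:=\Delta u(0)=\Delta(\omega-\tilde g)=-\Delta\tilde g$ in $\Omega_1$. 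Then $\Delta v$ satisfies the ODE $\partial_t\Delta v+a_1\Delta v=-a_1 h$ in $\mathcal{D}'(\Omega_1)$, with $\Delta v(0)=0$. Solving it gives
$$\Delta u(t)=e^{-a_1 t}h\quad\text{in }\mathcal{D}'(\Omega_1).$$
This is justified by pairing with a fixed test function, which produces a scalar linear ODE in $t$. The $C^1$-in-time regularity needed for that is available because $u\in C^1([0,T];H^1(\Omega))$ by Proposition \ref{solPBtime}. It follows that $\Delta u'(t)=-a_1e^{-a_1t}h$. So for each $t$, both $u(t)$ and $u'(t)$ solve Poisson equations in $\Omega_1$ whose right-hand sides are bounded multiples of $\Delta\tilde g$, with factors $1$ and $a_1$ respectively. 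This is where the factor $(1+\sigma_1/\e)$ in \eqref{stimauC1secondo}–\eqref{stimauC1} comes from.

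\emph{Sobolev case (i).} If $\tilde g\in H^k_{loc}(\Omega_1)$, then $\Delta\tilde g\in H^{k-2}_{loc}(\Omega_1)$. Apply Proposition \ref{Hmregularity} on an intermediate open set $\Omega'$ with $K\Subset\Omega'\Subset\Omega_1$, with $m=k-2$:
$$\|u(t)\|_{H^k(K)}\le c\big(\|\Delta\tilde g\|_{H^{k-2}(\Omega')}+\|u(t)\|_{L^2(\Omega)}\big),\qquad \|u'(t)\|_{H^k(K)}\le c\big(a_1\|\Delta\tilde g\|_{H^{k-2}(\Omega')}+\|u'(t)\|_{L^2(\Omega)}\big).$$
The $L^2$ terms are bounded via Proposition \ref{solPBtime} and the Poincaré equivalence in Proposition \ref{prop.equivnormX}, giving $\max\{1,T\}\Lambda_{\e,\sigma}^2\|\tilde g\|_{H^1(\Omega)}$. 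The $\Delta\tilde g$ term is bounded by $\|\tilde g\|_{H^k(K')}$ on a slightly larger compact set $K'$, which one either absorbs into the notation or handles by a covering argument. Continuity in time follows from the same estimates applied to differences. Indeed, $u'(t)-u'(s)$ solves $-\Delta(u'(t)-u'(s))=a_1(e^{-a_1t}-e^{-a_1s})h$, whose right-hand side tends to $0$ in $H^{k-2}$, and $u'\in C^0([0,T];H^1(\Omega))$ controls the lower-order term. The same argument applies to $u$. The $H^2(\Omega_1)$ part of the $W^{1,\infty}$ claim uses $\tilde g\in H^2(\Omega)$ in the same way with $m=0$.

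\emph{Hölder case (ii).} Use Propositions \ref{strongLapl} and \ref{weakLapl} on balls, followed by a covering, exactly as in Proposition \ref{Regularityomega}. Since $h=-\Delta\tilde g$, the Schauder estimate only directly yields $u(t)\in C^{k,\alpha}$ if we write $u(t)=\omega-\tilde g+v(t)$. Then
$$\Delta v(t)=(e^{-a_1t}-1)h=(1-e^{-a_1t})\Delta\tilde g,$$
which is equivalent to $\Delta\big(v(t)-(1-e^{-a_1t})\tilde g\big)=0$. Hence
$$u(t)=\omega-e^{-a_1t}\tilde g+H(t),$$
where $H(t)$ is harmonic in $\Omega_1$ and therefore smooth, with interior bounds controlled by its $L^1$ norm through Proposition \ref{weakLapl}. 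This decomposition is the key trick, and it also gives a cleaner route for (i). It yields $\|u(t)\|_{C^{k,\alpha}(K)}\le\|\omega\|_{C^{k,\alpha}}+\|\tilde g\|_{C^{k,\alpha}(K)}+C\|H(t)\|_{L^1}$. Similarly, $u'(t)=a_1e^{-a_1t}\tilde g+H'(t)$. The $L^1$ norms of $H$ and $H'$ are bounded by $\|u\|_{C^1H^1}+(1+a_1)\|\tilde g\|_{H^1}$ together with the bound on $\omega$, all via Propositions \ref{solPBstatic} and \ref{solPBtime}. Continuity in $t$ of $H$ in $C^{k,\alpha}(K)$ follows from applying the harmonic estimates to $H(t)-H(s)$. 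This works because $H\in C^1([0,T];H^1)$, as $u$, $\omega$ and $e^{-a_1t}\tilde g$ all are.

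\emph{Main obstacle.} The hard step is rigorously deriving $\Delta u(t)=e^{-a_1t}\Delta u(0)$ in $\mathcal{D}'(\Omega_1)$ from the weak formulation. This requires showing that $t\mapsto\langle \nabla u(t),\nabla\varphi\rangle$ is $C^1$ and satisfies the scalar ODE for every fixed $\varphi\in C_0^\infty(\Omega_1)$. I would do this by invoking $u'=(-\Delta_{ND})^{-1}w'$ from the proof of Proposition \ref{solPBtime}. Everything after that point is the interior elliptic machinery already used in the $\sigma\equiv0$ case, plus bookkeeping of the factor $1+\sigma_1/\e$.
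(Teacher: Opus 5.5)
Your argument is correct for all the interior claims. It uses the same mechanism as the paper, but performs the two steps in the opposite order.

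Write $a_1:=\sigma_1/\e$. The paper sets $f(t):=u'(t)+a_1u(t)$ and first shows, by hypoellipticity and interior estimates, that $f\in C^0([0,T];C^\infty(\Omega_1))$ with a bound uniform in $t$. It then identifies $u$ with $e^{-a_1t}u(0)+\int_0^t e^{-a_1(t-s)}f(s)\,ds$, using uniqueness of strict $L^2$-solutions in $L^2(\Omega_1)$ (Theorem \ref{teoLpsolution}). You integrate in time first, at the level of $\Delta u$ and one test function at a time, obtaining $\Delta u(t)=e^{-a_1t}\Delta u(0)$ in $\mathcal{D}'(\Omega_1)$. Only then do you apply elliptic regularity, to the harmonic remainder $u(t)-e^{-a_1t}u(0)$, which is exactly the paper's Duhamel term.

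Your order has three advantages:
\begin{itemize}
\item It needs only scalar ODE uniqueness. This is immediate from $u\in C^1([0,T];\tilde X)$ with $u'=(-\Delta_{ND})^{-1}w'$, so your ``main obstacle'' is routine.
\item It avoids time-integrating a $C^\infty$-valued map.
\item It exhibits the relaxation law $\rho(t)=e^{-a_1t}\rho(0)$ for the charge density $\rho=-\e\Delta V=-\e\Delta u$ inside the electrode.
\end{itemize}
The paper's order has the advantage of mirroring the $\sigma\equiv0$ proof, where $f=u'$.

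A few smaller points:
\begin{itemize}
\item For (i), do use the decomposition route. Proposition \ref{Hmregularity} applied directly to $u(t)$ puts $\tilde g$-norms over a neighbourhood of $K$ on the right-hand side, and no covering argument removes that.
\item To get $C^1$ rather than $C^0$ in time, also apply the harmonic estimates to $H'(t)-H'(s)$ and to difference quotients of $H$.
\item The initial datum is $u(0)=-\tilde\omega=\tilde g-\omega$, not $\omega-\tilde g$. The paper makes the same sign slip in its $\sigma\equiv0$ proof, and it is harmless.
\end{itemize}

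What your proposal does not reach is $u\in W^{1,\infty}(0,T;H^2(\Omega_1))$, which is a global statement on $\Omega_1$. Proposition \ref{Hmregularity} with $m=0$, like your decomposition, only bounds $\|u(t)\|_{H^2(K)}$ for compact $K\subset\Omega_1$. Since $\tilde g\in H^2(\Omega)$ and $u(0)=\tilde g-\omega$, the claim contains $\omega\in H^2(\Omega_1)$, i.e.\ regularity of the static potential up to $\de\Omega_1$. That requires boundary and transmission regularity which nothing in the paper supplies.

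This regularity can in fact fail where $\de\Omega_1$ reaches the Dirichlet--Neumann junction on $\de\Omega$, for example at the rim of the top face of $C_+$ in Section \ref{realcase}. There, solutions of the mixed problem generally carry an $r^{\lambda}$ singularity with $\lambda<1$. The paper's proof does not close this gap either: it simply asserts $h(t,\cdot)\in H^2(\Omega_1)$. What either argument actually proves is $u\in C^1([0,T];H^2_{loc}(\Omega_1))$.
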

\begin{proof}
Let $\Omega_1\subseteq \Omega$ be an open set such that $\sigma\equiv \sigma_1>0$ is constant in $\Omega_1$. Then, by Proposition \ref{solPBtime}, we have
$$\langle w'(t,\cdot)|\varphi\rangle_{X^*,X}+\frac{\sigma_1}{\e}\int_{\Omega_1}\langle \nabla(-\Delta_{ND})^{-1} w(t,\cdot),\nabla\varphi(t,\cdot)\rangle\,dx=\int_{\Omega_1}\langle\nabla\omega,\nabla\varphi\rangle\,dx,$$
for every $\varphi\in C_0^\infty(\Omega_1)\,$, $t\in [0,T]$.
Thus, recalling that $u(t,\cdot)=(-\Delta_{ND})^{-1}w(t,\cdot)$, we have 
$$-\Delta\Big(u'(t,\cdot)+\frac{\sigma_1}{\e} u(t,\cdot)\Big)=-\Delta\omega\in C^\infty(\Omega_1)\quad\text{in $\mathcal{D}'(\Omega_1)$,\,\,\, $\forall\,t\in [0,T]$}.$$ Let us set $f(t,\cdot):=u'(t,\cdot)+\frac{\sigma_1}{\e} u(t,\cdot)$. Arguing as for the proof of \eqref{stimaSUPu}, we obtain $f(t,\cdot)\in C^\infty(\Omega_1)$ for every $t\in [0,T]$, and 
\begin{equation}\label{stimaSUPf}
\|f(t,\cdot)\|_{C^{m+2,\gamma}(K)}\leq c_{n,m,\Omega,K}(1+\|\sigma\|_{L^\infty(\Omega)}/\e)^2\|\tilde g\|_{H^1(\Omega)},\quad\text{$\forall\,t\in [0,T]$},
\end{equation}
for any compact set $K\subset \Omega_1$ and $m\in\N,\gamma\in [0,1[$. In particular, we have $f\in L^\infty(0,T; C^\infty(\Omega_1))$. Moreover, since $-\Delta(f(t,\cdot)-f(s,\cdot))=0$ in $\mathcal{D}'(\Omega_1)$ for every $t,s\in [0,T]$, arguing again as above, we find $f\in C^0([0,T]; C^\infty(\Omega_1))$.
\medskip

\underline{Proof of $(ii)$}. Let $\tilde g\in C^{k,\alpha}_{loc}(\Omega_1)\cap H^2(\Omega)$ for some $k\in\N,\alpha\in [0,1[$. Let us define the function $h:[0,T]\times \Omega_1\to \R$ such that 
\begin{equation}\label{defh}
h(t,x):=e^{-\frac{\sigma_1}{\e}t}(-\tilde\omega)+\int_0^t e^{-\frac{\sigma_1}{\e}(t-s)}f(s,x)\,ds.
\end{equation}
Using the regularity of $f$ and $\tilde\omega$, we have $h\in C^1([0,T];C_{loc}^{k,\alpha}(\Omega_1)\cap H^2_{loc}(\Omega_1))$ and $h(t,\cdot)\in H^2(\Omega_1)$ for every $t\in [0,T]$. Moreover, $h$ satisfies the following Cauchy problem in $L^2(\Omega_1)$
\begin{equation}\label{ODEcauchy}
\left\{
\begin{array}{ll}
 y'(t)=-\frac{\sigma_1}{\e}y(t)+f(t),&\quad\text{in $L^2(\Omega_1)$, $t\in [0,T]$}, \\
y(0) = -\tilde\omega\in L^2(\Omega_1).&
\end{array}\right.
\end{equation}
We observe that $-\frac{\sigma_1}{\e} Id_{L^2(\Omega_1)}:L^2(\Omega_1)\to L^2(\Omega_1)$ is trivially a Hille-Yosida operator, $f\in C^0([0,T]; C^0(\Omega_1))\subseteq L^2(0,T; L^2(\Omega_1))$ and $h\in C^1([0,T];C^0(\Omega_1))\subseteq W^{1,2}(0,T;L^2(\Omega_1))$. Thus, by Theorem \ref{teoLpsolution}, we have that $h$ is the unique $L^2$-solution of \eqref{ODEcauchy}.
On the other hand, by Proposition \ref{solPBtime}, we know that $u\in C^1([0,T];\tilde X)\subseteq W^{1,2}(0,T;L^2(\Omega_1))$ and $u$ satisfies \eqref{ODEcauchy} by the definition of $f$. By the uniqueness of the solution of \eqref{ODEcauchy}, we obtain $u=h$, showing that $u\in C^1([0,T];C^{k,\alpha}_{loc}(\Omega_1)\cap H^2_{loc}(\Omega_1))$ and $u(t,\cdot)\in H^2(\Omega_1)$ for every $t\in [0,T]$.

We observe that, by the definition of $f$ and by \eqref{defh}, $u'(t,x)=-\frac{\sigma_1}{\e}u(t,x)+f(t,x)$ and $u(t,x)=e^{-\frac{\sigma_1}{\e}t}(-\tilde\omega)+\int_0^t e^{-\frac{\sigma_1}{\e}(t-s)}f(s,x)\,ds$
for every $(t,x)\in [0,T]\times\Omega_1$ (up to consider modifications on sets of measure zero).
Using all the estimates we have proved before,
\begin{align*}
&\|u\|_{C^1([0,T];C^{k,\alpha}(K))}=\sup_{t\in [0,T]}\|u(t,\cdot)\|_{C^{k,\alpha}(K)}+\sup_{t\in [0,T]}\|u'(t,\cdot)\|_{C^{k,\alpha}(K)}\\
&\quad\quad\leq (1+\sigma_1/\e)\sup_{t\in [0,T]}\|u(t,\cdot)\|_{C^{k,\alpha}(K)}+\sup_{t\in [0,T]}\|f(t,\cdot)\|_{C^{k,\alpha}(K)}\\
&\quad\quad\leq 2(1+\sigma_1/\e)\big(\|\tilde\omega\|_{C^{k,\alpha}(K)}+\max\{1,T\}\sup_{t\in [0,T]}\|f(t,\cdot)\|_{C^{k,\alpha}(K)}\big)\\
&\quad\quad\leq 2(1+\sigma_1/\e)\big(\|\omega\|_{C^{k,\alpha}(K)}+\|\tilde g\|_{C^{k,\alpha}(K)}+\max\{1,T\}\sup_{t\in [0,T]}\|f(t,\cdot)\|_{C^{k,\alpha}(K)}\big)\\
&\quad\quad\leq c_{n,k,\Omega,K}(1+\sigma_1/\e)\max\{1,T\}(1+\|\sigma\|_{L^\infty(\Omega)}/\e)^2\Big(\|\tilde g\|_{C^{k,\alpha}(K)}+\|\tilde g\|_{H^1(\Omega)}\Big),
\end{align*}
for every compact set $K\subset\Omega_1$.

\underline{Proof of $(i)$}. Let $\tilde g\in H^{k}_{loc}(\Omega_1)\cap H^2(\Omega)$ for some $k\geq 3$. The proof can be made analogously, recalling that $\tilde g\in H^2(\Omega)$ so that $h\in  W^{1,\infty}(0,T;H^2(\Omega_1))\subseteq L^2(0,T;L^2(\Omega_1))$.
\end{proof}

\medskip

\begin{coro}\label{regularityTOT}
Let $\Omega\subseteq\R^3$ be the volume of detector described in Section \ref{realcase} (see Fig.\ref{fig:repr}) and assume Hypothesis \ref{Hgsigmae} with respect to the function $\tilde g$ as in \eqref{examplegtilde}. Let $V$ be the unique weak solution of \eqref{eq.BVPIC} as in Definition \ref{solutionBVP1}. If $\Omega_1\subseteq \Omega$ is an open set such that $\sigma\equiv \sigma_1\geq 0$ is constant in $\Omega_1$, then
$$V\in C^1([0,T];C^\infty(\Omega_1)).$$
Moreover, for every $k\in\N$ and for every $K\subset\Omega_1$ compact set, there exists a constant $c=c(n,k,\Omega,K)>0$ such that
$$\|V\|_{C^{1}([0,T];C^{k}(K))}\leq c\max\{1,T\}\left(1+\frac{\sigma_1}{\e}\right)\Lambda_{\e,\sigma}^2\Big(\|\tilde g\|_{C^{k}(K)}+\|\tilde g\|_{H^1(\Omega)}\Big).$$
\end{coro}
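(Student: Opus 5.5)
The argument is a combination of the two preceding theorems with Proposition \ref{Regularityomega}, once we know that the initial datum $\tilde g$ in \eqref{examplegtilde} is smooth in the interior of each constant-conductivity region. The first step is therefore to check, from the explicit construction in Section \ref{realcase}, that $\tilde g\in C^\infty(\Omega_1)\cap H^2(\Omega)$. In particular, $\tilde g\in C^{k,0}_{loc}(\Omega_1)$ for every $k\in\N$ and satisfies the compatibility condition of Hypothesis \ref{Hgsigmae}. Since $\tilde g$ is built from the electrode geometry (constant on electrodes, smoothly interpolated in the bulk), this should be a direct verification.

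Next we split into two cases according to the value of $\sigma_1$.

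\textbf{Case $\sigma_1=0$.} We apply part (iii) of the theorem for regions with $\sigma\equiv0$, with $\alpha=0$ and arbitrary $k$. This gives $u\in C^1([0,T];C^{k}_{loc}(\Omega_1))$ together with the bound
\[
c\max\{1,T\}\Lambda_{\e,\sigma}^2\bigl(\|\tilde g\|_{C^k(K)}+\|\tilde g\|_{H^1(\Omega)}\bigr),
\]
which is trivially dominated by the claimed one, since the factor $1+\sigma_1/\e$ equals $1$ here.

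\textbf{Case $\sigma_1>0$.} We apply part (ii) of the theorem for regions with $\sigma\equiv\sigma_1>0$, which yields exactly the factor $(1+\sigma_1/\e)\Lambda_{\e,\sigma}^2$.

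In both cases, $k$ is arbitrary and the seminorms of $C^1([0,T];C^\infty(\Omega_1))$ are the $C^1([0,T];C^k(K))$ norms on an exhaustion. Hence $u\in C^1([0,T];C^\infty(\Omega_1))$.

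For the static part, Proposition \ref{Regularityomega}(2) (with $\sigma\equiv\sigma_1\ge0$ on $\Omega_1$) gives $\omega\in C^\infty(\Omega_1)$ and
\[
\|\omega\|_{C^k(K)}\le C\Lambda_{\e,\sigma}\|\tilde g\|_{H^1(\Omega)}.
\]
Since $\omega$ is independent of time, it belongs to $C^1([0,T];C^\infty(\Omega_1))$ with $\|\omega\|_{C^1([0,T];C^k(K))}=\|\omega\|_{C^k(K)}$. Adding this to the bound for $u$, and using $\Lambda_{\e,\sigma}\le\Lambda_{\e,\sigma}^2$, $1\le\max\{1,T\}(1+\sigma_1/\e)$, gives the stated estimate for $V=\omega+u$.

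The only step I expect to need real care is the first one: confirming that the specific $\tilde g$ of \eqref{examplegtilde} is $C^\infty$ inside $\Omega_1$ and lies in $H^2(\Omega)$. This is delicate if $\Omega_1$ touches the electrode interfaces. I would handle it by noting that $\Omega_1$ is open inside a region of constant $\sigma$, hence separated from the discontinuity surfaces of $\sigma$, which are where $\tilde g$ could fail to be smooth. Everything else is bookkeeping of constants.
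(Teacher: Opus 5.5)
Your proposal is correct and follows the route the paper intends; the corollary has no separate proof and is meant to be read off from the preceding results. You apply part (iii) of the theorem for $\sigma\equiv 0$, or part (ii) of the theorem for $\sigma\equiv\sigma_1>0$, with $\alpha=0$ and arbitrary $k$, and then add the time-independent bound for $\omega$ from Proposition \ref{Regularityomega}(2). The step you single out as delicate is in fact immediate and has nothing to do with where $\sigma$ jumps: \eqref{examplegtilde} is a product of smooth factors, and $e^{-1/(\beta|\ell_3-z|^2)}$ is flat at $z=\ell_3$, so $\tilde g\in C^\infty(\overline{\Omega})$, as asserted in Section \ref{realcase}.
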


\bigskip
\section{An explicit example of 3D solid-state detector}\label{realcase}

The analysis and numerical solutions of the quasi-static Maxwell BVP \eqref{QSM1eq} are non-trivial due to the complex geometry of modern detectors, the discontinuity of the conductivity function $\sigma$ of the material, and the nature of the equation itself. Traditional numerical approaches have been used to analyze these effects, including the Finite Element Method (FEM), for instance implemented in COMSOL Multiphysics \cite{comsol}, and spectral methods \cite{Anderlini_2026}. More recently, Physics-Informed Neural Networks (PINNs) have also been proposed as an alternative framework for solving the governing equations \cite{bombini2025PINN, R_SIF25}, offering a mesh-free approach.
\medskip

To illustrate the physical relevance of the model, we consider a representative configuration of a 3D solid‑state diamond detector, following the setup analyzed in \cite{Anderlini_2026}.
Let $n=3$ and consider $\Omega$ as the interior of the cubic volume $\Pi_{j=1}^{3}\,[-\ell_j,\ell_j]\subseteq \R^3$ with edges of length $2\ell_j$, whose corners and edges are rounded on a sufficiently small scale \(\delta>0\) to ensure the regularity of the boundary (for example, considering $\delta$ smaller than the fabrication error of the components).
The polarized electrode is modeled as the cylindrical region
$$C_+:=\{(x,y,z)\in\R^{3}\,\,:\,\,\|(x,y)\|_{\R^2}\leq \rho_+,\,\,L_+\leq z\leq \ell_3\},$$
where $0<L_+<2\ell_3$ denotes the height of the cylinder and $\rho_+>0$ is the radius of the base. The grounded electrodes $C_-^i$ are obtained by translating $C_+$ as follows: given a spacing parameter $a>\rho_+\sqrt{2}$, we consider $\{C^i_-\}_i:=\{\mathbf{v}+C_+\}$ where the translation vectors $\mathbf{v}$ range in
\begin{align*}
\mathbf{v}\,\,\in&\,\,\left\{\Big((d_1+1)a, (d_2+1)a,-(\ell_3-L_+)\Big)\,\,|\,\,d_j\in 2\Z,\,\,|d_j|\leq N,\,\,\text{for $j=1,2$}\right\}\\
&\cup \left\{(d_1a,d_2a,0)\,\,|\,\,d_j\in 2\Z,\,\,d_j\neq 0,\,\,|d_j|\leq N,\,\,\text{for $j=1,2$}\right\}
\end{align*}
for some integer $N\in\N$ chosen according to the portion of detector under consideration. In Fig.\,\ref{fig:repr}, there is a graphical representation of $\Omega$ described above.

In this configuration, the conductivity $\sigma$ of the diamond bulk is negligible compared to that of the graphitic electrodes (see \cite{Anderlini_2026}) and $\sigma$ can be regarded as constant within each electrode region. The polarized and grounded electrodes $C_+,C_-^i$ are compact subsets of $\overline{\Omega}$ of positive measure, and they are mutually separated, namely $d(\de C_+,\de C_-^i)\geq a\sqrt{2}-2\rho_+>0, \,d(\de C_-^i,\de C_-^j)\geq a\sqrt{2}-2\rho_+>0$ for every $i,j=1,\ldots,m_e,\,i\neq j$.

Moreover, since $n=3$ and $2-\frac{n}{2}=\frac{1}{2}>0$, by the Morrey's inequality, $\tilde g\in H^2(\Omega)\hookrightarrow C^{0,\gamma}(\overline{\Omega})$ for $0<\gamma\leq \frac{1}{2}$ and $g\in C^0(\de\Omega)$ is the restriction of $\tilde g$ in the usual sense. An explicit example of $\tilde g:\overline{\Omega}\to\R$ can be given as follows: if $\beta>0$ and $\delta\in\,]0,a/2[$ are parameters suitable chosen, then
\begin{equation}\label{examplegtilde}
\tilde g(x,y,z):=\left\{
\begin{array}{ll}
V_0\left(1-e^{-\frac{1}{\beta\,|\ell_3-z|^2}}\right)\,\eta(z)\,\chi(x,y),&\quad\text{if  $z<\ell_3$},\\
V_0\,\chi(x,y),&\quad\text{if $z=\ell_3$},
\end{array}\right.
\end{equation}
where $V_0>0$ is constant, $\chi\in C_0^\infty(B_{\rho_++\delta}((0,0)))$ is a cutoff function such that $0\leq \chi\leq 1$ and $\chi|_{B_{\rho_+}((0,0))}\equiv 1$ (we observe $\mathrm{Int}(\de C_+\cap\de\Omega)\cong B_{\rho_+}((0,0))\subseteq \R^2$), and $\eta\in C^\infty(\R)$ is an increasing function such that $0\leq \eta\leq 1$, $\eta\equiv 0$ if $z\leq z_0$ and $\eta\equiv 1$ if $z\geq z_1$ for some $-\ell_3<z_0<z_1<\ell_3$ suitable chosen. As a consequence, $\tilde g\in C^\infty(\overline{\Omega})$, 
$$g|_{\de\Omega\,\cap\,\de C_+}\equiv V_0,\quad\quad g|_{\de\Omega\,\cap\,\de C_-^i}\equiv 0\quad\text{for every $i=1,\ldots,m_e$},$$
as prescribed by the Ramo-Schockley theorem, and $\de_\nu \tilde g\equiv 0$ in $B_N$.

\begin{figure}[h!]
    \centering
    \includegraphics[width=0.6\linewidth]{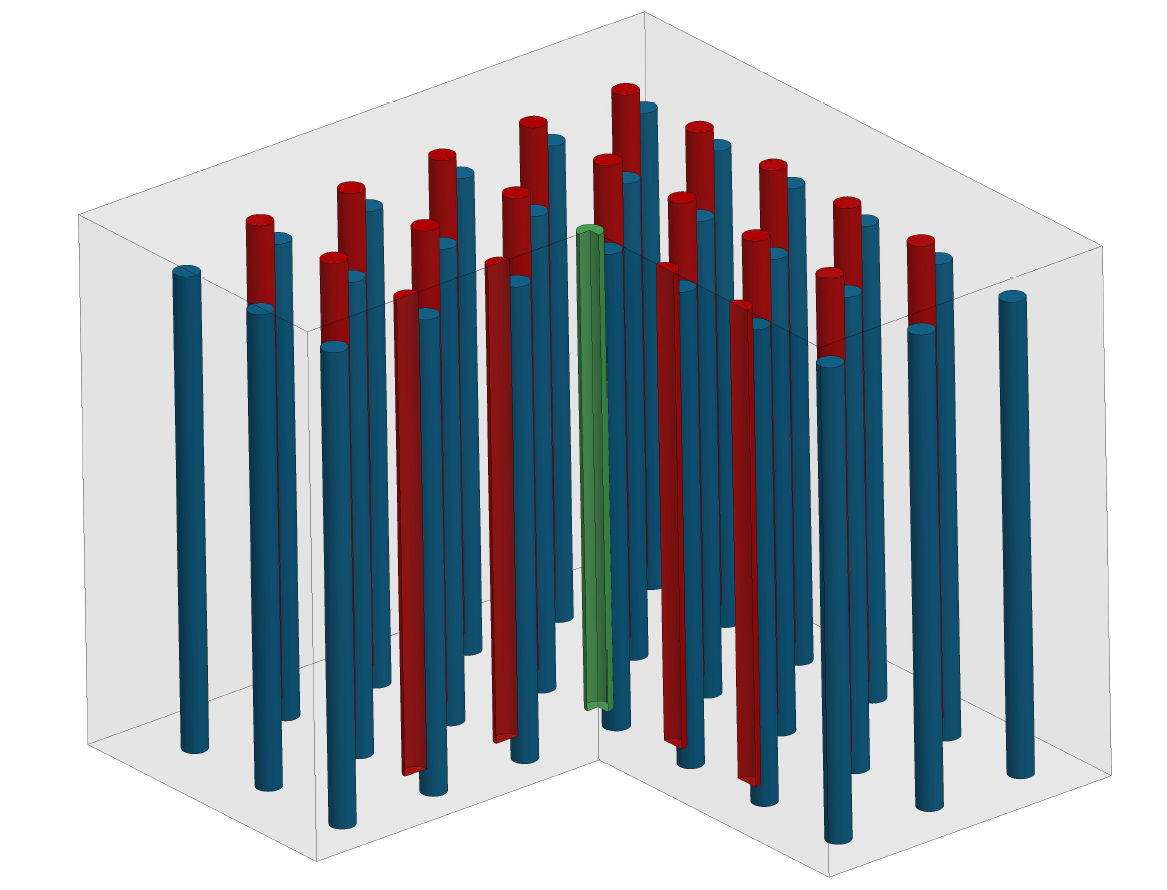}
    \caption{Schematic representation of a 3D diamond detector showing a portion of the sensor composed of multiple repeated cells. The illustration highlights the electrodes connected to the top (red), those to the bottom (blue) of the volume of the detector and the polarized one (green). The layout is indicative and not drawn to scale. See \cite{Anderlini_2026} for more details.}
    \label{fig:repr}
\end{figure}


\bigskip
\appendix
\section{Elliptic BVP with mixed Dirichlet-Neumann BC}\label{sec:A1}

Let $A\in L^\infty(\Omega,\R^{n\times n})$ be a symmetric uniformly positive matrix on $\Omega$, i.e.\,
$$\lambda_{min}(A)\,|\xi|^2\leq \langle A(x)\xi, \xi\rangle \leq \lambda_{max}(A)\,|\xi|^2,$$
for every $\xi\in\R^n$, for a.e.\,$x\in\Omega$, where $\lambda_{min}(A),\lambda_{max}(A)\in\,]0,+\infty[$ are positive constants. 

For every $F\in X^*$ (where $X^*$ is the dual space of $X$ defined in \eqref{defX}) we consider the \textit{elliptic BVP with mixed Dirichlet-Neumann boundary conditions} given by
\begin{equation}\label{eq.BVPNDgeneral}
\left\{
\begin{array}{ll}
\dsy -\div \big( A \nabla \eta \big) = F,&\quad\text{in $\Omega$}, \\
\eta = 0,&\quad\text{in $B_D$,} \\
\dsy\langle A\nabla\eta,\nu\rangle  = 0,&\quad\text{in $B_N$}.
\end{array}\right.
\end{equation}
Suppose $\eta$ is a regular solution of \eqref{eq.BVPNDgeneral} and $A\in C^\infty(\Omega,\R^{n\times n})$. Let $\varphi \in C^\infty(\overline{\Omega})$ be such that $\varphi|_{B_D}\equiv 0$. Then, multiplying the left side of the first equation in \eqref{eq.BVPNDgeneral} by $\varphi$, integrating by parts and using the boundary conditions, we obtain
\begin{align*}
\int_\Omega\,-\div\big(A\nabla \eta\big)\,\varphi\,dx &=-\int_{\de\Omega}\,\langle A\nabla\eta,\nu\rangle\, \varphi\,\d H^{n-1}+\int_\Omega\,\langle A\nabla\eta,\nabla \varphi\rangle\,dx\\
&=\int_\Omega\,\langle A\nabla\eta,\nabla \varphi\rangle\,dx.
\end{align*}
Hence, we consider the linear operator 
$$\langle-\div(A\nabla \eta)|\varphi\rangle_{X^*,X}:=(A\nabla \eta, \nabla\varphi)_0,\quad\text{for every $\varphi\in X$},$$
which is well-posed and bounded since
\begin{equation}\label{eq.stimaaND}
|(A\nabla \eta, \nabla\varphi)_0|\leq \|A\|_{L^\infty(\Omega,\R^{n\times n})}\|\nabla \eta\|_{0}\|\nabla\varphi\|_{0}= \lambda_{max}(A)\|\eta\|_{X}\|\varphi\|_{X}
\end{equation}
for every $\varphi\in X$.

\begin{defi}
We call a weak solution of \eqref{eq.BVPNDgeneral} a function $\eta\in X$ that satisfies the equation
\begin{equation}\label{eq.weakformBVPND}
(A\nabla\eta,\nabla\varphi)_0=\langle F|\varphi\rangle_{X^*,X},\quad\text{for every $\varphi\in X$}.
\end{equation}
\end{defi}

\begin{prop}
For every symmetric uniformly positive matrix $A\in L^\infty(\Omega,\R^{n\times n})$ and for every $F\in X^*$, there exists a unique weak solution $\eta\in X$ of \eqref{eq.BVPNDgeneral} such that 
\begin{equation}\label{eq.stimacontF}
\|\eta\|_{X}\leq \frac{\| F\|_{X^*}}{\lambda_{min}(A)}.
\end{equation}
\end{prop}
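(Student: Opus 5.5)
This is a direct application of the Lax--Milgram theorem on the Hilbert space $X$ of \eqref{defX}, equipped with the inner product $(\nabla\cdot,\nabla\cdot)_0$. By Proposition \ref{prop.equivnormX}, the norm $\|\cdot\|_X=\|\nabla\cdot\|_0$ is equivalent to $\|\cdot\|_1$ on $X$. Moreover, $X$ is closed in $H^1(\Omega)$, being the kernel of the bounded operator $R_{B_D}$. Hence $(X,\|\cdot\|_X)$ is a Hilbert space, and $X^*$ is its dual with the operator norm fixed above.

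I will consider the bilinear form
$$b(\eta,\varphi):=(A\nabla\eta,\nabla\varphi)_0,\qquad \eta,\varphi\in X.$$
It has the following properties.
\begin{enumerate}[(i)]
\item \textbf{Continuity.} Estimate \eqref{eq.stimaaND} gives $|b(\eta,\varphi)|\le\lambda_{max}(A)\|\eta\|_X\|\varphi\|_X$.
\item \textbf{Coercivity.} The pointwise lower bound $\langle A(x)\xi,\xi\rangle\ge\lambda_{min}(A)|\xi|^2$, applied with $\xi=\nabla\eta(x)$ and integrated over $\Omega$, gives $b(\eta,\eta)\ge\lambda_{min}(A)\|\nabla\eta\|_0^2=\lambda_{min}(A)\|\eta\|_X^2$.
\item \textbf{Symmetry.} $b$ is symmetric because $A$ is. One could therefore use the Riesz representation theorem for the equivalent inner product $b$, but Lax--Milgram suffices.
\end{enumerate}
Lax--Milgram then gives, for each $F\in X^*$, a unique $\eta\in X$ with $b(\eta,\varphi)=\langle F|\varphi\rangle_{X^*,X}$ for all $\varphi\in X$. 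This is exactly \eqref{eq.weakformBVPND}.

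For the estimate \eqref{eq.stimacontF}, I will test with $\varphi=\eta$:
$$\lambda_{min}(A)\|\eta\|_X^2\le b(\eta,\eta)=\langle F|\eta\rangle_{X^*,X}\le\|F\|_{X^*}\|\eta\|_X.$$
If $\eta\neq0$, dividing by $\|\eta\|_X$ gives the bound; if $\eta=0$ it is trivial.

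There is no real obstacle here. The only point needing care is the one already handled by Proposition \ref{prop.equivnormX}: coercivity in the gradient seminorm is true coercivity on $X$ only because $B_D$ is nonempty and $\Omega$ is connected, which makes $\|\nabla\cdot\|_0$ a genuine norm and $X$ complete for it. I will also record that $\|(-\Delta_{ND})^{-1}\|_{\mathcal{L}(X^*,\tilde X)}\le1$, which the paper uses in the main text. This is the special case $A=Id$, where $\lambda_{min}=1$.
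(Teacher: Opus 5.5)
Your proposal is correct and follows the paper's proof: Lax--Milgram on $(X,\|\nabla\cdot\|_0)$ for the form $(A\nabla\eta,\nabla\varphi)_0$, with continuity from \eqref{eq.stimaaND} and coercivity constant $\lambda_{min}(A)$. Testing with $\varphi=\eta$ to get \eqref{eq.stimacontF}, and checking via Proposition \ref{prop.equivnormX} that $X$ is complete for the gradient norm, only make explicit what the paper leaves implicit when it cites Lax--Milgram.
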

\begin{proof}
Let $a:X\times X\to \R$ be the bilinear map defined as
$$a(\eta,\varphi):=(A\nabla \eta, \nabla\varphi)_0,\quad\quad\text{for any $\eta,\varphi \in X$}.$$
Then, $a$ is continuous by \eqref{eq.stimaaND}. Moreover, $a$ is coercive. Indeed, 
$$a(\eta,\eta)=(A\nabla \eta, \nabla\eta)_0\geq \lambda_{min}(A)\,\|\eta\|^2_{X}.$$
Since $F \in X^*$, by the \textit{Lax-Milgram Theorem}, there exists a unique weak solution $\eta\in X$ of problem \eqref{eq.BVPNDgeneral} such that the estimate \eqref{eq.stimacontF} holds.
\end{proof}

\bigskip

Let us define the solution operator of \eqref{eq.BVPNDgeneral} with respect to the Laplacian $-\Delta$ (i.e.\,$A=Id_{n\times n}$) as
$$(-\Delta_{ND})^{-1}:X^*\to X$$
such that for every $\zeta\in X^*$, the element $\psi_\zeta:=(-\Delta_{ND})^{-1}\zeta\in X$ is the unique weak solution of the problem
\begin{equation}\label{problemX}
\left\{
\begin{array}{ll}
\dsy -\Delta\psi_\zeta = \zeta,&\quad\text{in $\Omega$}, \\
\psi_\zeta = 0,&\quad\text{in $B_D$}, \\
\dsy\de_\nu\psi_\zeta = 0,&\quad\text{in $B_N$}.
\end{array}\right.
\end{equation}
Thus, $(-\Delta_{ND})^{-1}$ is well-posed and linear. Moreover, it is injective by the uniqueness, and it is continuous by estimate \eqref{eq.stimacontF}, i.e.\,
\begin{equation}\label{eq.stimaDelta}
\|\nabla\psi_\zeta\|_0=\|\psi_\zeta\|_{X}\leq \|\zeta\|_{X^*}.
\end{equation} 
Thus, by the \textit{Open mapping theorem}, $(-\Delta_{ND})^{-1}$ is a topological isomorphism onto its image $\tilde X:=R((-\Delta_{ND})^{-1})\,\, (\subseteq X)$ with inverse map $-\Delta|_{\tilde X}:\tilde X\to X^*,$ where we have endowed the space $\tilde X$ with the norm $\|\cdot\|_{\tilde X}:=\|\nabla\cdot\|_0$.

\begin{prop}
$\tilde X$ is an Hilbert space endowed with the inner product $(\nabla\cdot,\nabla\cdot)_0$.
\end{prop}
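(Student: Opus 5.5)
Since $(-\Delta_{ND})^{-1}$ is an isomorphism of $X^*$ onto $\tilde X$ satisfying $\|\nabla\psi_\zeta\|_0=\|\zeta\|_{X^*}$, the plan is to observe that $\tilde X=X$ and thereby inherit the Hilbert structure of $X$. Two facts need to be checked: that $(\nabla\cdot,\nabla\cdot)_0$ is a genuine inner product on $\tilde X$, and that $\tilde X$ is complete for the induced norm $\|\cdot\|_{\tilde X}=\|\nabla\cdot\|_0$.

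For the inner product, bilinearity and symmetry are immediate. Positive definiteness follows from Proposition \ref{prop.equivnormX}: if $\eta\in\tilde X\subseteq X$ and $\|\nabla\eta\|_0=0$, then $\|\eta\|_1\le c_\Omega\|\nabla\eta\|_0=0$, so $\eta=0$. By the same proposition, $\|\nabla\cdot\|_0$ is a norm on $X$ equivalent to $\|\cdot\|_1$. Since $X$ is the kernel of the bounded operator $R_{B_D}$, it is closed in $H^1(\Omega)$, and therefore $(X,(\nabla\cdot,\nabla\cdot)_0)$ is a Hilbert space.

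The key step is to prove $\tilde X=X$, that is, surjectivity of $(-\Delta_{ND})^{-1}:X^*\to X$. Given $\eta\in X$, define $\zeta\in X^*$ by $\langle\zeta|\varphi\rangle_{X^*,X}:=(\nabla\eta,\nabla\varphi)_0$. This functional is bounded by Cauchy--Schwarz, with $\|\zeta\|_{X^*}\le\|\eta\|_X$. By construction $\eta$ satisfies the weak formulation \eqref{eq.weakformBVPND} with $A=Id$ and $F=\zeta$, so uniqueness in the Lax--Milgram proposition gives $(-\Delta_{ND})^{-1}\zeta=\eta$. Hence $\tilde X=X$ as sets, and the norms agree because both are $\|\nabla\cdot\|_0$. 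Completeness of $\tilde X$ then follows from completeness of $X$.

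If one prefers not to identify $\tilde X$ with $X$, an alternative is to transport completeness through the isomorphism. Take a Cauchy sequence $\eta_k=(-\Delta_{ND})^{-1}\zeta_k$ in $\tilde X$. The maps $\zeta\mapsto\psi_\zeta$ and $\eta\mapsto(\nabla\eta,\nabla\cdot)_0$ are mutually inverse, and the latter is an isometry by Riesz, so $(\zeta_k)$ is Cauchy in $X^*$. It converges to some $\zeta\in X^*$, and continuity of $(-\Delta_{ND})^{-1}$ gives $\eta_k\to\psi_\zeta\in\tilde X$.

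The surjectivity argument is the only step with real content, and I do not expect an obstacle there. The one point to check is that the functional $\zeta$ is well defined on all of $X$, which holds because $\eta\in X$ means $\nabla\eta\in L^2(\Omega)$.
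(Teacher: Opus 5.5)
Your proof is correct, but it takes a different route from the paper.

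The paper never identifies $\tilde X$. It takes a Cauchy sequence $\psi_k=(-\Delta_{ND})^{-1}\zeta_k$ and uses completeness of $X$ to obtain a limit $\psi\in X$. It then shows that $(\zeta_k)$ is bounded in $X^*$ and extracts a weakly convergent subsequence $\zeta_k\rightharpoonup\zeta$ by Banach--Alaoglu and reflexivity. Passing to the limit in $(\nabla\psi_k,\nabla\varphi)_0=\langle\zeta_k|\varphi\rangle_{X^*,X}$ gives $\psi=(-\Delta_{ND})^{-1}\zeta$, so $\tilde X$ is closed in $X$.

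You instead observe that for every $\eta\in X$ the functional $(\nabla\eta,\nabla\cdot)_0$ lies in $X^*$. By uniqueness of the weak solution, $(-\Delta_{ND})^{-1}$ sends this functional back to $\eta$. So $(-\Delta_{ND})^{-1}$ is simply the inverse of the Riesz isomorphism of $(X,(\nabla\cdot,\nabla\cdot)_0)$, and $\tilde X=X$.

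This buys three things:
\begin{itemize}
\item It is shorter and uses no compactness.
\item It proves strictly more: $(-\Delta_{ND})^{-1}:X^*\to X$ is a surjective isometry. Hence the distinction between $\tilde X$ and $X$ that the paper keeps in the time-dependent analysis (for instance, maximal monotonicity of $A$ on $\tilde X$) is unnecessary.
\item It removes the need for the paper's earlier appeal to the open mapping theorem, which is made before the range is known to be complete.
\end{itemize}

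Your alternative argument transports Cauchy sequences to $X^*$ and uses completeness of $X^*$. This is essentially a strong-convergence version of the paper's proof and is also fine. The equality $\|\nabla\psi_\zeta\|_0=\|\zeta\|_{X^*}$ you rely on there (the paper only proves $\le$) follows by testing with $\varphi=\psi_\zeta$.
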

\begin{proof}
If $(\psi_k)_{k\in\N}$ is a Cauchy sequence in $\tilde X$, then $\psi_k\to\psi$ in $X$ with respect to $\|\nabla\cdot\|_0$ with $\psi\in X$. Thus, $\psi_k\rightharpoonup\psi$ in $X$, implying that (recall that the functional $(\nabla\cdot,\nabla\varphi)_0\in X^*$)
\begin{equation}\label{eqweakconv}
(\nabla\psi_k,\nabla\varphi)_0\xrightarrow{k\to+\infty}(\nabla\psi,\nabla\varphi)_0\quad\text{for every $\varphi\in X$}.
\end{equation}
Since $(\psi_k)_{k\in\N}\subseteq\tilde X=R((-\Delta_{ND})^{-1})$, there exists a sequence $(\zeta_k)_{k\in\N}\subseteq X^*$ such that
$$(\nabla\psi_k,\nabla\varphi)_0=\langle\zeta_k|\varphi\rangle_{X^*,X}\quad\text{for every $\varphi\in X$}.$$
Moreover,
\begin{align*}
\|\zeta_k\|_{X^*}&=\sup_{0\neq \varphi\in X,\|\varphi\|_{X}\leq1 }|\langle\zeta_k|\varphi\rangle_{X^*,X}|=\sup_{0\neq \varphi\in X,\|\varphi\|_{X}\leq1 }|(\nabla\psi_k,\nabla\varphi)_0|\\
&\leq\sup_{0\neq \varphi\in X,\|\varphi\|_{X}\leq1 } \|\nabla\psi_k\|_0\|\varphi\|_X\leq \|\nabla\psi\|_0+1,\quad\text{for $k\gg 1$},
\end{align*}
showing that $(\zeta_k)_{k\in\N}$ is bounded in $X^*$. 
By \textit{Banach-Alaoglu theorem}, up to pass to a subsequence, $\zeta_k\rightharpoonup\zeta$ in $X^*$, i.e.\,for every $\varphi\in X^{**}=X$ ($X$ is reflexive since it is an Hilbert space), 
$$\langle\zeta_k|\varphi\rangle_{X^*,X}=\langle\varphi|\zeta_k\rangle_{X^{**},X^*}\xrightarrow{k\to+\infty}\langle\varphi|\zeta\rangle_{X^{**},X^*}=\langle\zeta|\varphi\rangle_{X^*,X}.$$
Combining previous computations with \eqref{eqweakconv}, we obtain $\psi=(-\Delta_{ND})^{-1}\zeta$, i.e.\,$\psi\in \tilde X$, showing that $\tilde X$ is an Hilbert space with respect to $\|\nabla\cdot\|_0$.
\end{proof}\medskip

We observe that, by estimate \eqref{eq.stimaDelta}, we have
\begin{align*}
\|(-\Delta_{ND})^{-1}\|_{\mathcal{L}(X^*,\tilde X)}&=\sup_{0\neq \zeta\in X^*,\|\zeta\|_{X^*}\leq1 } \|(-\Delta_{ND})^{-1}\zeta\|_{\tilde X}\\
&\leq\sup_{0\neq \zeta\in X^*,\|\zeta\|_{X^*}\leq1 } \|\zeta\|_{X^*} = 1.
\end{align*}
Moreover,
\begin{align*}
\|-\Delta|_{\tilde X}\|_{\mathcal{L}(\tilde X,X^*)}&=\sup_{0\neq \phi\in \tilde X,\,\|\phi\|_{\tilde X}\leq1 }\left(\sup_{0\neq \varphi\in X,\,\|\varphi\|_{X}\leq1 }|\langle-\Delta\phi|\varphi\rangle|\right)\\
&\leq \sup_{0\neq \phi\in \tilde X,\,\|\phi\|_{\tilde X}\leq1 }\left(\sup_{0\neq \varphi\in X,\,\|\varphi\|_{X}\leq1 }|(\nabla\phi,\nabla\varphi)_0|\right)\leq 1.
\end{align*}

\bigskip

\section*{Acknowledgements}

The author is supported by the University of Bologna, funds for selected research topics, and by GNAMPA of INdAM (Istituto Nazionale di Alta Matematica ``F. Severi''), Italy.

I would like to thank INFN-Firenze, and in particular Lucio Anderlini, Alessandro Bombini, and Clarissa Buti, for introducing me to this problem and for many valuable discussions on its physical aspects.

I am deeply grateful to Simone Ciani and Annalisa Baldi for their many insightful suggestions regarding the development and organization of this work. I also sincerely thank Felix Brandt for carefully reading a preliminary version of the manuscript and for his valuable comments and suggestions concerning its presentation.

\bibliographystyle{amsplain}

\bibliography{R_submitted}

\bigskip
\tiny{
\noindent
Alessandro Rosa 
\par\noindent
Universit\`a di Bologna, Dipartimento
di Matematica\par\noindent Piazza di
Porta S.~Donato 5, 40126 Bologna, Italy.
\par\noindent
e-mail:
alessandro.rosa15@unibo.it 
}

\end{document}